\newtheorem{theorem}{Theorem}[section]
\newtheorem{proposition}[theorem]{Proposition}
\newtheorem{lemma}[theorem]{Lemma}
\newtheorem{corollary}[theorem]{Corollary}
\newtheorem{question}[theorem]{Question}
\newtheorem*{acknowledgements}{Acknowledgements}
\theoremstyle{definition}
\newtheorem{definition}[theorem]{Definition}
\newtheorem{remark}[theorem]{Remark}
\numberwithin{equation}{section}
\newcommand{\mb}{\mathbb}
\newcommand{\mc}{\mathcal}
\newcommand{\eps}{\varepsilon}
\newcommand{\mr}{\mathrm}
\DeclareMathOperator{\Ric}{Ric}
\newcommand{\hess}{{\mathrm{Hess}}}
\newcommand{\volg}{\mathrm{vol}_g}
\newcommand{\C}{\mathbb{C}}
\newcommand{\N}{\mathbb{N}}
\newcommand{\R}{\mathbb{R}}
\renewcommand{\subset}{\subseteq}
\newcommand{\defeq}{\mathrel{\mathop:}=}
\newcommand{\haus}{\mathcal{H}}
\newcommand{\dist}{\mathsf{d}}
\newcommand{\meas}{\mathfrak{m}}
\newcommand{\di}{\mathop{}\!\mathrm{d}}
\DeclareMathOperator{\RCD}{RCD}
\DeclareMathOperator{\ncRCD}{ncRCD}
\def\Xint#1{\mathchoice
{\XXint\displaystyle\textstyle{#1}}%
{\XXint\textstyle\scriptstyle{#1}}%
{\XXint\scriptstyle\scriptscriptstyle{#1}}%
{\XXint\scriptscriptstyle\scriptscriptstyle{#1}}%
\!\int}
\def\XXint#1#2#3{{\setbox0=\hbox{$#1{#2#3}{\int}$ }
\vcenter{\hbox{$#2#3$ }}\kern-.6\wd0}}
\def\dashint{\Xint-}
\title{On the geometry at infinity of manifolds with linear volume growth and nonnegative Ricci curvature}
\author{Xingyu Zhu \textsuperscript{1}}\thanks{  \textsuperscript{1} Institute for Applied Mathematics, University of Bonn, Endenicher Allee 60, 53115 Bonn, Germany. Email: {zhu@iam.uni-bonn.de}}
 \subjclass[2020]{Primary 53C21, 53C23.\\
\indent key words: Ricci curvature, splitting, linear volume growth, isoperimetric sets}
\begin{document}
\begin{abstract}
We prove that an open noncollapsed manifold with nonnegative Ricci curvature and linear volume growth always splits off a line at infinity. This completes the final step to prove the existence of isoperimetric sets for given large volumes in the above setting. We also find that under our assumptions, the diameters of the level sets of any Busemann function are uniformly bounded as opposed to a classical result stating that they can have sublinear growth when the end is collapsing. Moreover, some equivalent characterizations of linear volume growth are given. Finally, we construct an example to show that for manifolds in our setting, although their limit spaces at infinity are always cylinders, the cross sections can be nonhomeomorphic.

\end{abstract}
\maketitle

\section{Introduction}
Recently there has been an interest in studying the structure at infinity on manifolds with nonnegative Ricci curvature, meaning the pointed Gromov-Hausdorff (pGH in short) limits of the form $(M,g,p_i)$ for an open manifold and a sequence of points diverging to infinity (see Definition \ref{def:endsvolinfty}). The motivations come from several different fields. For instance, one motivation is to understand positive scalar curvature coupled with nonnegative Ricci curvature at large scales \cites{Zhu_Geometryofpsc, WZZZ_PSC_RLS, ZhuZhu23,zhu2023twodimension} and another is to examine the existence of isoperimetric sets in a noncompact manifold \cites{isoregion, antonelli2023isoperimetric} with nonnegative Ricci curvature and Euclidean or linear volume growth. In both cases, a common interest is to see if any limit space at infinity (see Definition \ref{def:endsvolinfty}) splits off many lines, because splitting at infinity simplifies the structure of the limit spaces under our consideration, making it possible to apply some powerful techniques. 

In light of this demand, we take the very first step to study in particular the line splitting at infinity for noncollapsed manifolds with linear volume growth. We introduce these notions as follows. %For the former, splitting is needed to produce Euclidean factors so that a torus can be embedded into it and Gromov's torical band estimates can be applied. For the latter, splitting is  

\begin{definition}\label{def:endsvolinfty}
    Let $(M,g)$ be an open manifold. We say that $M$ is noncollapsed if 
    \begin{equation}\label{eq:ncends}
       \text{there exists $v>0$ such that}\ \volg(B_1(x))>v,\  \forall x\in M.
    \end{equation}
    We say that $(M,g)$ has linear volume growth if for some point (hence all points) $p\in M$,
    \begin{equation}\label{eq:linearvol}
      \text{there exists $V>0$ such that}\   \limsup_{r\to\infty}\frac{\volg(B_r(p))}{r}=V.
    \end{equation}
    We say that a sequence of points $\{p_i\}_{i\in \N^+}$ diverges to infinity (or it is a diverging sequence) if for any fixed point $q\in M$, $\dist(q,p_i)\to \infty$ as $i\to \infty$. We say a Ricci limit space is a limit space at infinity of $M$ if it is obtained as a pGH limit of sequence of the form $(M,g, p_i)$ for a sequence $\{p_i\}_{i\in \N^+}$ diverging to infinity. We say a metric space is a cylinder if it is a metric product in the form of $\R\times K$ with $K$ being a compact metric space.

\end{definition} 
Examples of manifolds with linear volume growth that are not necessarily a product of $\R$ and a compact manifold include $3$-manifolds with nonnegative Ricci curvature and uniformly positive scalar curvature as confirmed by \cites{OLS23, MW_geometryofpsc}.

Our first result is about splitting at infinity with the noncollapsed assumption. 
%\begin{remark}\label{rmk:oneend}
%   In fact, if $(M,g)$ with $\Ric_g\ge 0$ satisfies \eqref{eq:ncends} and \eqref{eq:linearvol}, we may always assume $M$ has only one end, as having more ends results in a splitting $M=\R\times K$ with $K$ being compact, which is a simple case. 
%\end{remark}

\begin{theorem}\label{thm:splitting}
    Let $(M,g)$ be an open $n$-manifold with $\Ric_g\ge 0$. Suppose $M$ is noncollapsed \eqref{eq:ncends}, and has linear volume growth with a volume ratio upper bound $V>0$ as in \eqref{eq:linearvol}. Then for any sequence $\{p_i\}_{i\in \N^+}$ diverging to infinity, all possible pGH limit spaces of the sequence $(M,g,p_i)$ splits as $\R\times K$, where $K$ is a compact $\ncRCD(0,n-1)$ space with $\haus^{n-1}(K)\le nV$. Moreover, all possible $K$ have the same $\haus^{n-1}$ volume. 
\end{theorem}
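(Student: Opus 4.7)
Let $\{p_i\}$ be a diverging sequence, fix $q\in M$, and set $d_i:=\dist(q,p_i)\to\infty$. By Gromov precompactness and the noncollapsing assumption \eqref{eq:ncends}, a subsequence gives $(M,g,p_i)\to(X,\dist_X,p_\infty)$ in pGH; by stability of the noncollapsed $\RCD(0,n)$ condition (De Philippis--Gigli), $X$ is a noncollapsed $\RCD(0,n)$ space inheriting the unit-ball lower bound $v$. The Cheeger--Colding--Gigli splitting theorem then reduces the $\R\times K$ conclusion to exhibiting a single line through $p_\infty$ in $X$; moreover $K$ inherits noncollapsing from $X$ via the product, so compactness of $K$ will follow as soon as $\haus^{n-1}(K)<\infty$.

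\textbf{Finding the line (main step).} Arzel\`a--Ascoli applied to minimizing geodesics $\gamma_i:[0,d_i]\to M$ from $p_i$ to $q$ yields, along a subsequence, a ray $\sigma^-:[0,\infty)\to X$ based at $p_\infty$. The crux is to construct a second ray $\sigma^+:[0,\infty)\to X$ based at $p_\infty$ with $\dist_X(\sigma^-(s),\sigma^+(t))=s+t$ for all $s,t\ge 0$; then $\sigma^-*\sigma^+$ is the desired line. My plan is to produce, for each large $i$, a point $z_i\in M$ with $\dist(p_i,z_i)\to\infty$ and $\dist(q,z_i)=d_i+\dist(p_i,z_i)-o(1)$, i.e.\ $z_i$ asymptotically collinear with $q$ through $p_i$. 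Passing to the limit of the minimizing $p_iz_i$ segments gives $\sigma^+$, and the collinearity forces $\dist_X(\sigma^-(s),\sigma^+(t))=s+t$ via the triangle inequality $\dist(\gamma_i(s),z_i)\ge\dist(q,z_i)-\dist(q,\gamma_i(s))=\dist(p_i,z_i)+s-o(1)$ combined with its reverse. The production of such $z_i$ uses noncollapsing together with linear volume growth: a Busemann-function analysis in the spirit of Sormani (for a suitable ray $\eta$ from $q$, at each $p_i$ one selects a ray $\tau_i$ from $p_i$ along which $b_\eta$ grows at maximal rate and sets $z_i=\tau_i(t_i)$), combined with Cheeger--Colding type excess estimates, forces such collinear points to exist at each sufficiently distant $p_i$. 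Applying the splitting theorem then gives $X=\R\times K$.

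\textbf{Volume bound, compactness, and uniqueness.} Since $B_r(p_i)\subset B_{d_i+r}(q)\setminus B_{d_i-r}(q)$, Bishop--Gromov at $q$ yields
\begin{equation*}
\volg(B_r(p_i))\le\volg(B_{d_i+r}(q))\Bigl[1-\bigl(\tfrac{d_i-r}{d_i+r}\bigr)^n\Bigr]\le(V+\eps)(d_i+r)\cdot\tfrac{2nr}{d_i}\,(1+o(1)),
\end{equation*}
whose $i\to\infty$ limit is $2nrV$. By volume convergence under noncollapsing, $\haus^n(B_r(p_\infty))\le 2nrV$, and in the product $X=\R\times K$ one has $\haus^n(B_r(p_\infty))/(2r)\nearrow\haus^{n-1}(K)$ as $r\to\infty$, giving $\haus^{n-1}(K)\le nV$; combined with noncollapsing, this makes $K$ compact. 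Uniqueness of $\haus^{n-1}(K)$ across subsequences will follow by identifying $2\haus^{n-1}(K)$ with the asymptotic density of $M$'s volume growth $\lim_{R\to\infty}\volg(B_R(q))/R$ (whose existence in our setting is itself a consequence of the splitting combined with Bishop--Gromov), an invariant of $M$ independent of the subsequence.

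\textbf{Main obstacle.} The construction of $\sigma^+$ is the principal difficulty: ruling out cut-locus obstruction and wandering rays, so that minimizing extensions of the segments $qp_i$ exist asymptotically, requires the full interplay of noncollapsing, linear volume growth, and Busemann-function diameter control. Once the line is in hand, the remaining steps --- the $\RCD$ splitting, compactness of $K$, and the volume comparison --- are routine.
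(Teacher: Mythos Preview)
Your overall strategy---produce a line in the limit and invoke the $\RCD$ splitting theorem, then bound $\haus^{n-1}(K)$ via an annulus-volume estimate---matches the paper's, and your derivation of $\haus^{n-1}(K)\le nV$ is essentially Lemma~\ref{lem:AnnulusVol}. There are, however, two genuine gaps.

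\textbf{The construction of $\sigma^+$.} Your production of the collinear points $z_i$ is where the real work lies, and the sketch (``select a ray $\tau_i$ from $p_i$ along which $b_\eta$ grows at maximal rate'') does not close. Even granting $b_\eta(\tau_i(t))=b_\eta(p_i)+t$, the collinearity $\dist(q,z_i)\ge d_i+t_i-o(1)$ would require $b_\eta(p_i)\ge d_i-o(1)$, which is \emph{not} automatic for an arbitrary diverging $p_i$. The paper's route is sharper and different in substance: it first proves (Proposition~\ref{prop:bddDist}) that if $M$ does not split then every point of $M$ lies within a fixed distance $D$ of one \emph{fixed} ray $\gamma$. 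This bounded-distance lemma is itself nontrivial---it is proved by contradiction using the almost splitting theorem along $\gamma$ together with Lemma~\ref{lem:AnnulusVol} to force compactness of the cross section in the limit. Once $\dist(p_i,\gamma)<D$ is known, the Abresch--Gromoll excess estimate applied to the endpoints $\gamma(0),\gamma(T_i)$ gives $e(p_i)\le C(n)D^{n/(n-1)}\bigl(\tfrac{1}{\dist(p_i,\gamma(0))-D}+\tfrac{1}{\dist(p_i,\gamma(T_i))-D}\bigr)^{1/(n-1)}\to 0$, which is exactly the collinearity you want with $z_i=\gamma(T_i)$. You have not isolated this bounded-distance step, and without it the excess does not go to zero.

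\textbf{The equal-volume claim.} Your identification $2\haus^{n-1}(K)=\lim_{R\to\infty}\volg(B_R(q))/R$ is circular as written. Existence of that limit is not a consequence of Bishop--Gromov (which controls $\volg(B_R)/R^n$, not $\volg(B_R)/R$), and the splitting at infinity gives information about $\volg(B_r(p_i))$ for \emph{fixed} $r$ as $i\to\infty$, not about $\volg(B_R(q))$ as $R\to\infty$; any attempt to swap these limits already presupposes that the cross-section volume is independent of the subsequence. Indeed the paper only ever proves $\limsup_{R\to\infty}\volg(B_R(q))/R=\mathfrak{D}$, and that \emph{after} the equal-volume statement. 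The paper's argument for equal volume is entirely different: it shows (Proposition~\ref{prop:conv}) that for the fixed ray $\gamma$, the map $s\mapsto\mr{Per}(\{b_\gamma<s\})$ is monotone and tends to $\haus^{n-1}(K)$ for \emph{any} limit taken along $\gamma$; since every diverging sequence stays within $D$ of $\gamma$ (again Proposition~\ref{prop:bddDist}), its limit at infinity coincides with a limit along $\gamma$, and the common perimeter limit pins down $\haus^{n-1}(K)$ as an invariant of $M$.
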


\begin{remark}\label{rmk:ncends}
   There are extensive studies of manifolds with linear volume growth by Sormani \cites{SormaniMiniVol,SormaniSublinear} without the noncollapsed assumption. The motivation for considering the noncollapsed assumption \eqref{eq:ncends} is also natural. In her pioneering work, Sormani \cite{SormaniMiniVol}*{Example 26} constructed a $4$-manifold with nonnegative Ricci curvature and linear volume growth, on which there is a Busemann function whose level sets have logarithmic diameter growth and finite codimension $1$ volume. In this example, the limit space at infinity is $\R^3$, it does split off an $\R$-factor but the other factor is $\R^2$ which is noncompact as opposed to Theorem \ref{thm:splitting}. 
    
    Intuitively, for the diameter to tend to infinity while the volume stays bounded, there should be some ``direction'' that is shrinking, leading to a collapsing end. Therefore it is believable that $\eqref{eq:ncends}$ can prevent the diameter of the level sets of a Busemann function from tending to infinity. Indeed, it will be shown in proposition \ref{prop:uniBddDiam}.
\end{remark}

    \begin{remark}
        The cross sections $K$ in the limit space at infinity can indeed be nonisometric, as shown by an example of Sormani \cite{SormaniMiniVol}*{Example 27}. In this example the limit space splits as $\R\times \mb S^3$ but for different diverging sequence of points the $\mb S^3$-factor can carry two different metrics, and both metrics give the same volume to $\mb S^3$. We will see in Theorem \ref{thm:3dnoniso} that such an example with nonisometric $K$ can have dimension $3$ and in Theorem \ref{thm:nonhomeo} that the cross sections $K$ can be nonheomeomorphic.    
    \end{remark}

   The following example of Kasue-Washio \cite{KasueExample}*{p.913-914} helps illustrate why the above Theorem is not obvious, Specifically, it shows that with nonnegative Ricci curvature, not every diverging sequence of points produces a limit space at infinity that splits. So at least the linear volume growth condition plays a key role. The example says there is a Riemannian metric $g$ on $\R^4=\R\times\R\times \mb S^2$ of the form $g=f^2(r)\di t^2+\di r^2 +\eta(r)g_{\mb S^2}$ with nonnegative Ricci curvature. From its expression $g$ is translation invariant along $\partial_t$ direction, but $(\R^4,g)$ does not split. When taking a sequence of points diverging to infinity along $\partial_t$ direction we get a limit space isometric to $(\R^4,g)$, the manifold we start with, which does not split. See also some discussions in \cite{isoregion}*{Section 1.4}. In this example $(\R^4, g)$ has Euclidean volume growth instead of linear volume growth. In the seemingly simple and restrictive case where $M$ has nonnegative Ricci curvature and linear volume growth, to the author's best knowledge the following splitting problem remains open, i.e., when the noncollapsed assumption in Theorem \ref{thm:splitting} is removed.

\begin{question}
	Given $(M,g)$ with $\Ric_g\ge 0$ and linear volume growth \eqref{eq:linearvol}, is it true that for every sequence $\{p_i\}_{i\in \N^+}$ diverging to infinity, the pGH limit of $(M,g, p_i)$ splits off a line?
\end{question} 
    
 Unlike the rather complicated situation for Ricci curvature, in the case of manifolds with nonnegative sectional curvature, the picture for splitting is much simpler. In fact, the following two statements holds.
 \begin{enumerate}
     \item For a manifold with nonnegative sectional curvature, any limit space at infinity always splits off a line without further assumptions \cite{antonelli2023isoperimetric}*{Lemma 2.29}, \cite{zhu2023twodimension}*{Corollary 4.3}.
     \item For a manifold with nonnegative sectional curvature, itself splits if and only if its asymptotic cone splits. See for example a proof in \cite{isoregion}*{Theorem 4.6} and the references therein.
 \end{enumerate} 
 The very reason is the monotonicity of angles, which allows us to pass large scale information to smaller scales or vice versa. It is then pointed out by the author \cite{zhu2023twodimension}*{Lemma 4.2} that the above two statements can be unified.

 The above discussion gives a nice relation between asymptotic cones and limit spaces at infinity. In general we do not have such a relation for nonnegative Ricci curvature. A counterexample is again the one of Kasue-Washio's we discussed in the introduction \cite{KasueExample}, \cite{isoregion}*{section 1.4}. In this example, $(\R^4,g)$ has a unique asymptotic cone and it splits, but $(\R^4,g)$ itself does not. In the very special case of linear volume growth and being noncollapsed, Theorem \ref{cor:splitting} provides some similar relation. 

%Even if we assume both additional conditions, it still does not follow that every diverging sequence splits off a line at infinity as demonstrated by an example of Kasue-Washio \cite{KasueExample}*{p.913-914}, see \cite{isoregion}*{Section 1.4} for a discussion.  we still do not know if these conditions all together implies splitting always happens in a limit space at infinity or not. 
%Corollary \ref{cor:splitting} may give some insights in this direction. 
Nevertheless the idea of studying the relation between asymptotic cones and limit spaces at infinity still leads us to the following interesting observations or analogs. Note that a manifold with nonegative Ricci curvature and Euclidean volume growth is noncollapsed by Bishop-Gromov inequality.
\begin{itemize}
    \item For a manifold with Euclidean (maximal) volume growth, its asymptotic cones are metric cones and the cross sections of the asymptotic cones always have the same volume \cite{Cheeger-Colding97I}.
    \item For a noncollapsed manifold with linear (minimal) volume growth, its limit at infinity are cylinders and the cross sections of the cylinders always have the same volume.
	\item There exists a $5$-manifold with Euclidean (maximal) volume growth and its asymptotic cones can be a cone over either $\C P^2\sharp\overline{\C P^2}$ or $\mb S^4$, \cite{CN11}*{Example II}.   
	\item There exists a noncollapsed $5$-manifold with linear (minimal) volume growth and its limit spaces at infinity can be a product of $\R$ with either $\C P^2\sharp\overline{\C P^2}$ or $\mb S^4$.	
\end{itemize}   

After a in depth analysis, we can establish the following list of equivalence conditions, giving a rather complete description of the geometry of linear volume growth. Also, it partially extends \cite{antonelli2023isoperimetric}*{Corollary 3.11} to nonnegative Ricci curvature and partially answers the questions in \cite{antonelli2023isoperimetric}*{Problem 1.2}. However, there are remaining questions. For example we cannot show that if for some divergent sequence of points the corresponding limit space at infinity splits then for arbitrary divergent sequence of points the corresponding limit space at infinity also splits. This implication, if true, will link the isometric profile bound to the linear volume growth. See Remark \ref{rmk:isoprofile} for details.

\begin{theorem}\label{thm:equi}
        Let $(M,g)$ be an open $n$-manifold with $\Ric_g\ge 0$ such that $M$ has noncollapsed ends \eqref{eq:ncends}. The following statements are equivalent.
        \begin{enumerate}
            \item\label{item:linearvol} $M$ has linear volume growth.
            \item\label{item:uniclose1} Either $M$ splits off a line or for any ray $\gamma$ there exists $D>0$ such that $\dist_g(p,\gamma)<D$ for every $p\in M$.
            \item\label{item:uniclose2} Either $M$ splits off a line or there exists a ray $\gamma$ and $D>0$ such that $\dist_g(p,\gamma)<D$ for every $p\in M$.
            \item\label{item:splitting1} There exists a ray $\gamma$ such that for any sequence $t_i\nearrow\infty$, up to subsequence $(M,\gamma(t_i))$ pGH converges to $(\R\times K, (0,k_0))$ where $K$ is a compact $\ncRCD(0,n-1)$ space possibly depending on the sequence and $k_0\in K$. 
            \item\label{item:splitting2} For any sequence $\{p_i\}$ diverging to infinity, $(M,p_i)$ pGH converges to $(\R\times K, (0,k_0))$, for some $K$ that is a compact $\ncRCD(0,n-1)$ space and $k_0\in K$.
     \end{enumerate}        
    \end{theorem}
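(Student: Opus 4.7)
The plan is to prove the equivalences by closing two cycles meeting at (1): a ``cylindrical limit'' cycle $(1)\Rightarrow(5)\Rightarrow(4)\Rightarrow(1)$ and a ``bounded tube'' cycle $(1)\Rightarrow(2)\Rightarrow(3)\Rightarrow(1)$. Three of the six links are immediate: $(1)\Rightarrow(5)$ is exactly Theorem \ref{thm:splitting}; $(5)\Rightarrow(4)$ follows by picking any ray $\gamma$ and testing on the diverging sequence $p_i=\gamma(t_i)$; and $(2)\Rightarrow(3)$ is tautological. In (2) and (3) the alternative ``$M$ splits off a line'' must be understood as $M=\R\times N$ with $N$ compact, since under our standing hypotheses an isometric factor of $\R$ with noncompact cross section would force at least quadratic volume growth by Yau's linear lower bound applied to $N$, making (1) fail.

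For $(1)\Rightarrow(2)$, if $M$ already splits off a line, the previous observation forces a compact cross section and (2) holds. Otherwise, Theorem \ref{thm:splitting} guarantees every pGH limit at infinity is a cylinder with compact cross section of uniformly bounded $\haus^{n-1}$-volume, and Proposition \ref{prop:uniBddDiam} supplies a uniform bound $D$ on the diameter of every level set of the Busemann function $b_\gamma$ for any ray $\gamma$. Since $b_\gamma(\gamma(s))=s$, any $p$ shares the level set $b_\gamma^{-1}(b_\gamma(p))$ with $\gamma(b_\gamma(p))$, giving $\dist(p,\gamma)\le D$. For $(3)\Rightarrow(1)$ the splitting case again yields linear growth via compactness of the cross section; in the tube case $M\subseteq T_D(\gamma)$, every $p\in B_R(\gamma(0))$ lies within $D$ of some $\gamma(t)$ with $t\le R+D$ (by the triangle inequality and the minimality of $\gamma$), so $B_R(\gamma(0))$ is covered by the $\lceil(R+D)/D\rceil$ balls $B_{2D}(\gamma(kD))$, and the Bishop--Gromov bound $\volg(B_{2D}(\gamma(kD)))\le\omega_n(2D)^n$ produces $\volg(B_R(\gamma(0)))\le C(n,D)\,R$.

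The main obstacle is $(4)\Rightarrow(1)$: one must extract a global linear volume growth rate from splitting of limits along only a single ray $\gamma$, without invoking Theorem \ref{thm:splitting} or Proposition \ref{prop:uniBddDiam}, whose proofs rely on (1). The plan is to analyze the Busemann function $b_\gamma$ in three steps. Step (i): derive a uniform upper bound on $\haus^{n-1}(K)$ across all subsequential cylinder limits $\R\times K$ along $\gamma$, by combining noncollapsed volume convergence $\volg(B_r(\gamma(t_i)))\to\haus^n(B_r((0,k_0)))$ at fixed $r$ with Bishop--Gromov monotonicity at $\gamma(t_i)$. Step (ii): transfer this to a uniform upper bound on $\haus^{n-1}(b_\gamma^{-1}(s))$ for large $s$, via stability of codimension-one areas under pGH convergence on noncollapsed Ricci limits. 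Step (iii): identify the limiting Busemann function on each cylinder $\R\times K$ as the projection onto the $\R$-factor (which has unit gradient) and apply Cheeger--Colding stability to deduce $|\nabla b_\gamma|\to 1$ almost everywhere outside a compact set. The coarea formula then yields $\volg(b_\gamma^{-1}([-R,R]))\le CR+O(1)$, and since $B_R(\gamma(0))\subseteq b_\gamma^{-1}([-R,R])$ we conclude linear volume growth. Step (i) is the subtle bottleneck: the uniform bound on $\haus^{n-1}(K)$ cannot be imported from Theorem \ref{thm:splitting} and must be bootstrapped from the noncollapsed and nonnegative Ricci structure alone.
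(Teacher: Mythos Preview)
Your bounded-tube cycle $(1)\Rightarrow(2)\Rightarrow(3)\Rightarrow(1)$ is fine, and in fact your covering argument for $(3)\Rightarrow(1)$ is more elementary than the paper's route through coarea and the perimeter monotonicity of Proposition \ref{prop:conv}. (One small wrinkle in $(1)\Rightarrow(2)$: when $b_\gamma(p)<0$ the point $\gamma(b_\gamma(p))$ is undefined; handle this via Lemma \ref{lem:bddBuse}, which makes $\{b_\gamma\le 0\}$ compact.)

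The genuine gap is in $(4)\Rightarrow(1)$. Step (i) does not work as written: fixing $r$ and combining volume convergence with Bishop--Gromov at $\gamma(t_i)$ yields only $\haus^n(B_r((0,k_0)))\le\omega_n r^n$, which is just Bishop--Gromov on the limit and says nothing about $\haus^{n-1}(K)$ unless $r\gg\mathrm{diam}(K)$; with no a priori diameter bound across subsequential limits you cannot choose such an $r$ uniformly. A uniform diameter bound \emph{is} available, but via a diagonal compactness argument (first paragraph of the proof of Proposition \ref{prop:LevelDiam}), not via the mechanism you propose. Step (ii) is more serious: there is no off-the-shelf ``stability of codimension-one areas under pGH convergence'' applicable to Busemann level sets, which are closed but not smooth hypersurfaces; the paper's Proposition \ref{prop:conv} does prove perimeter convergence for sublevel sets, but its proof already assumes the bounded-distance-to-ray condition, i.e.\ item (3), so you cannot invoke it here. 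Step (iii) is harmless but unnecessary: $|\nabla b_\gamma|=1$ a.e.\ holds for any Busemann function, since it is $1$-Lipschitz and increases at unit speed along the asymptotic ray from every point.

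The paper avoids these issues by closing the cycle through $(4)\Rightarrow(3)$ rather than $(4)\Rightarrow(1)$. The key input is Proposition \ref{prop:LevelDiam}, which from hypothesis (4) alone yields $\limsup_{t\to\infty}\mathrm{diam}(b_\gamma^{-1}(t))<\infty$. One then argues by contradiction: if $M$ does not split yet points drift arbitrarily far from $\gamma$, a limiting argument (as in the proof of Proposition \ref{prop:bddDist}) produces a second ray $\sigma$ with $\dist(\sigma(s),\gamma)=s$; the diameter bound forces $b_\gamma\circ\sigma$ to stay bounded, so $\sigma$ and $\gamma$ lie in disjoint unbounded components separated by a compact Busemann annulus, giving two ends and hence a splitting. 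Once (3) is secured, your own covering argument finishes $(3)\Rightarrow(1)$. The fix to your plan is therefore to replace the area-based Steps (i)--(iii) with this diameter-based detour through (3).
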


To close this section, we point out the technical difficulties we overcome in this note. A guiding principle is to use Cheeger--Colding's almost splitting theorem to replace the arguments in \cite{antonelli2023isoperimetric}*{Section 3}, which exploit Alexandrov geometry. The overall idea for showing the equivalence in Theorem \ref{thm:equi} is essentially the same, so we will emphasize our idea of proof of Theorem \ref{thm:splitting}. Our proof is based on a special case of the almost splitting theorem, when the almost splitting happens along a ray. We describe it below. 

Let $(M,g)$ be an open $n$-manifold with $\Ric_g\ge0$, and $\gamma:[0,\infty)\to M$ be a ray. The Busemann function $b_\gamma:M\to\R$ associated to $\gamma$, which will be introduced more formally in the next section, is defined as
\begin{equation*}
	b_\gamma(p)=\lim_{t\to\infty} t-\dist_g(p,\gamma(t)).
\end{equation*}
Recall that in Cheeger--Gromoll splitting theorem if $M$ actually contains a line $\ell:\R\to M$, then $M$ splits exactly as a product of $\R$ and a level set of the Busemann function $b_\ell^{-1}(0)$ associated to $\ell|_{[0,\infty)}$. In the setting of the almost splitting theorem, let $\gamma:[0,\infty)\to M$ be a ray and $t_i\nearrow\infty$. Consider the Gromov-Hausdorff limit of a convergent subsequence of $(M,g,\gamma(t_i))$. Viewing at the point $\gamma(t_i)$, $\gamma|_{[t_i,\infty]}$ is still a ray and it extends backwards to $\gamma|_{[0,t_i]}$, so we almost have a line. As an analog of Cheeger--Gromoll splitting theorem, it is a folklore result that the almost splitting theorem applied to $B_R(\gamma(t_i))$ gives that $B_R(\gamma(t_i))$ is Gromov-Hausdorff close to a ball of the same radius in $\R\times b_\gamma^{-1}(t_i)$. However, this is not exactly the statement of the almost splitting theorem. Instead of working with the actual Busemann function, the almost splitting theorem works with the harmonic replacement of it, see Theorem \ref{thm:AlmostSplitting}. The most technical part of this paper is to clarify in precise sense how this folklore result is true. This will be the theme of Section \ref{sec:ALS}. The rest will be straightforward applications, as we will see in Section \ref{sec:infinity}.  %There are (at least) two strategies to study the splitting under linear volume growth and noncollapsed ends conditions. 

\begin{acknowledgements}
    The author would like to thank Gioacchino Antonelli and Marco Pozzetta for sending their paper \cite{antonelli2023isoperimetric} and Gioacchino Antonelli's master thesis.  \cite{antonelli2023isoperimetric} inspires the author to write the present manuscript. The author would also like to thank Shouhei Honda, Jiayin Pan, and Daniele Semola for reading a preliminary version of the manuscript and providing valuable feedbacks, Zhu Ye for spotting a mistake in the previous version, Igor Belegradek and Zetian Yan for some discussions of the constructions in section \ref{sec:example}, Sergio Zamora for helpful suggestions on Proposition \ref{prop:LevelDiam}. The author also appreciate the anonymous referee for patiently correcting several typos and the suggestions that greatly improve the readability.  
\end{acknowledgements}

\section{Busemann function and the almost splitting Theorem}\label{sec:ALS}
 \subsection{Busemann functions}
 Busemann functions play a fundamental role in the study of noncompact manifolds with nonnegative Ricci curvature. Some fine properties of them are studied by Sormani \cites{SormaniMiniVol, SormaniSublinear} in the case of linear volume growth. In this section we will review some of the results from Sormani's work and establish some direct implications. Since we are mostly dealing with metric geometry problems, we make a convention that all geodesics mentioned in this note are minimizing geodesics. 
 
 We start by giving the definition of a Busemann function. Let $(M,g)$ be an open $n$-manifold. We call a curve $\gamma:[0,\infty)\to M$ a ray on $M$ if $\gamma$ minimizes the distance globally, i.e. $$\dist_g(\gamma(s),\gamma(t))=|t-s|$$ for $s,t\ge 0$. Given a ray $\gamma$, the Busemann function $b_\gamma$ associated to $\gamma$ is defined by a monotone increasing limit
\begin{equation*}
	b_\gamma(p)=\lim_{t\to\infty} t-\dist_g(p,\gamma(t)).
\end{equation*}
It is easy to see $ t-\dist_g(p,\gamma(t))\le \dist_g(p,\gamma(0))$ so the limit always exists. $b_\gamma$ is a $1$-Lipschitz function by triangle inequality and when $\Ric_g\ge0$ it is a subharmonic function in the sense of distribution by the Laplacian comparison theorem.  

The following lemma comes handy and will be repeatedly used in the sequel.

\begin{lemma}[\cite{SormaniMiniVol}*{Lemma 6}]\label{lem:Sormani}
Let $\gamma$ be a ray and $b_\gamma$ be its associated Busemann function. For any real number $R$, we have for any $x\in b_\gamma^{-1}((-\infty, R])$ that
\[
\dist_g(x,b_\gamma^{-1}(R))= R-b_\gamma(x).
\]	
In particular, if $\mr{diam}(\{b_\gamma=R\})<\infty$ and $r<R$, then $\mr{diam}(\{b_\gamma=r\})\le \mr{diam}(\{b_\gamma=R\})+2(R-r)$.
\end{lemma}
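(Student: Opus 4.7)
The plan is to establish the displayed identity by proving the two inequalities separately, and then to deduce the diameter bound as a direct triangle-inequality consequence.

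For the lower bound $\dist_g(x, b_\gamma^{-1}(R)) \geq R - b_\gamma(x)$, I would simply invoke the fact that $b_\gamma$ is $1$-Lipschitz (as noted in the text just above the lemma): for any $y\in b_\gamma^{-1}(R)$, we have $R-b_\gamma(x)=b_\gamma(y)-b_\gamma(x)\leq \dist_g(x,y)$, and taking the infimum over $y$ finishes the bound.

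The substantive content is the matching upper bound $\dist_g(x, b_\gamma^{-1}(R)) \leq R - b_\gamma(x)$, and the natural tool here is an asymptotic (co-)ray from $x$ relative to $\gamma$. Concretely, for any $t_i\nearrow\infty$ let $\sigma_i:[0,\dist_g(x,\gamma(t_i))]\to M$ be a unit-speed minimizing geodesic from $x$ to $\gamma(t_i)$. Local compactness of $M$ together with Arzel\`a--Ascoli yields a subsequential limit $\sigma:[0,\infty)\to M$, which is a ray issuing from $x$. The heart of the argument is then to check that $b_\gamma(\sigma(s))=b_\gamma(x)+s$ for every $s\geq 0$. The inequality $\leq$ is immediate from the $1$-Lipschitz property. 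For $\geq$, I would use that for each fixed $s\geq 0$ and all large $i$, $\dist_g(\sigma_i(s),\gamma(t_i))=\dist_g(x,\gamma(t_i))-s$, so by continuity of the distance and uniform convergence of $\sigma_i\to\sigma$ on compacts,
\[
\dist_g(\sigma(s),\gamma(t_i)) \leq \dist_g(x,\gamma(t_i)) - s + o(1).
\]
Plugging this into the definition of $b_\gamma(\sigma(s))$ and using the monotone existence of the limit defining $b_\gamma(x)$ gives $b_\gamma(\sigma(s))\geq b_\gamma(x)+s$. Setting $s=R-b_\gamma(x)$ (which is nonnegative by the hypothesis $x\in b_\gamma^{-1}((-\infty,R])$), the point $\sigma(R-b_\gamma(x))$ lies in $b_\gamma^{-1}(R)$ and sits at distance exactly $R-b_\gamma(x)$ from $x$, delivering the upper bound.

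For the ``in particular'' clause, given $x_1,x_2\in\{b_\gamma=r\}$ with $r<R$, the first part produces points $y_j\in\{b_\gamma=R\}$ with $\dist_g(x_j,y_j)=R-r$ for $j=1,2$, and the triangle inequality yields
\[
\dist_g(x_1,x_2)\leq (R-r)+\dist_g(y_1,y_2)+(R-r)\leq \mathrm{diam}(\{b_\gamma=R\})+2(R-r),
\]
from which the stated diameter bound follows by taking the supremum over $x_1,x_2$. The only real obstacle is the $\geq$ direction in $b_\gamma(\sigma(s))=b_\gamma(x)+s$, which must be handled carefully: even though $\sigma_i(s)$ lies on a geodesic realizing the distance to $\gamma(t_i)$, the limit $\sigma(s)$ only inherits that property approximately, so one must use the monotone limit character of $b_\gamma$ (or equivalently the fact that $t\mapsto t-\dist_g(\cdot,\gamma(t))$ is nondecreasing) to absorb the $o(1)$ error.
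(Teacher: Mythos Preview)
Your proof is correct. The paper itself does not supply a proof of this lemma; it is simply quoted from \cite{SormaniMiniVol}*{Lemma 6}. Your argument via an asymptotic (Busemann) ray from $x$ is exactly the standard one used there: the $1$-Lipschitz bound gives $\dist_g(x,b_\gamma^{-1}(R))\ge R-b_\gamma(x)$, and the co-ray $\sigma$ satisfies $b_\gamma(\sigma(s))=b_\gamma(x)+s$, so $\sigma(R-b_\gamma(x))$ witnesses the reverse inequality. The diameter consequence via the triangle inequality is also the intended deduction. One small remark: your upper bound argument in fact exhibits an explicit point of $b_\gamma^{-1}(R)$ realizing the distance, so there is no issue about whether the infimum is attained when you pick the points $y_j$ in the last step.
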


Lemma \ref{lem:Sormani} is simple but powerful. We will make several uses of the following consequence of it both in this and the next section. 

\begin{corollary}\label{cor:LevelClose}
    Let $(M,g)$ be an $n$-manifold with $\Ric_g\ge 0$ and $\gamma$ be a ray. Fix $p\in M$. Let $t_p\in \R$ and $R\ge 0$ be such that such $\dist_g(\gamma(t_p), p)\le R$, and let $t_p'\defeq b_\gamma(p)$. Then 
    \begin{equation}
        |t_p-t_p'|\le R.
    \end{equation}
\end{corollary}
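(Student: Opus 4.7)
The plan is to derive the estimate from two elementary facts about the Busemann function, both recorded in the preceding subsection: it is $1$-Lipschitz, and its value along the ray itself recovers the parameter. With these two facts in hand, the inequality reduces to a single application of the Lipschitz property to the pair $(p, \gamma(t_p))$, so no further curvature input is needed. This is why it fits naturally as a corollary rather than a proper lemma.

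First I would verify the identity $b_\gamma(\gamma(t_p)) = t_p$. Since $\gamma$ is a ray, $\dist_g(\gamma(t_p), \gamma(t)) = t - t_p$ for all sufficiently large $t$, so plugging this into the defining limit
$$b_\gamma(\gamma(t_p)) = \lim_{t \to \infty} \bigl(t - \dist_g(\gamma(t_p), \gamma(t))\bigr)$$
the distance term collapses the limit to $t_p$. The hypothesis that $\dist_g(\gamma(t_p), p) \le R$ implicitly forces $t_p \ge 0$ (so $\gamma(t_p)$ is defined), which is the only point at which one reads the statement slightly more tightly than written.

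Next I would invoke the $1$-Lipschitz property of $b_\gamma$ (a consequence of the triangle inequality applied inside the defining limit) to the pair $(p, \gamma(t_p))$:
$$|t_p - t_p'| = |b_\gamma(\gamma(t_p)) - b_\gamma(p)| \le \dist_g(\gamma(t_p), p) \le R,$$
which is exactly the asserted bound.

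I do not anticipate any substantive obstacle. The statement is essentially a restatement of the Lipschitz property of $b_\gamma$ together with the observation that $b_\gamma\circ\gamma = \mathrm{id}_{[0,\infty)}$, and the author evidently isolates it as a named corollary only because it will be convenient to cite repeatedly in Section \ref{sec:infinity} when comparing parameters along a ray to Busemann level sets.
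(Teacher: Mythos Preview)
Your argument is correct and in fact cleaner than the paper's. You use only two ingredients already recorded in the text: that $b_\gamma$ is $1$-Lipschitz and that $b_\gamma(\gamma(s))=s$ for $s\ge 0$. Combining these gives $|t_p-t_p'|=|b_\gamma(\gamma(t_p))-b_\gamma(p)|\le \dist_g(\gamma(t_p),p)\le R$ in one line, and your remark about $t_p\ge 0$ is the only hidden assumption (shared by the paper's own proof, which writes $t_p=\dist_g(\gamma(0),\gamma(t_p))$).

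The paper instead splits into cases according to the sign of $t_p-t_p'$. For $t_p'\ge t_p$ it uses the triangle inequality via $\dist_g(p,\gamma(0))$ together with $b_\gamma(p)\le\dist_g(p,\gamma(0))$; for $t_p\ge t_p'$ it invokes Lemma~\ref{lem:Sormani} to rewrite $t_p-t_p'$ as $\dist_g(p,b_\gamma^{-1}(t_p))$ and then bounds this by $\dist_g(p,\gamma(t_p))$. Your route avoids the case split and the appeal to Lemma~\ref{lem:Sormani} entirely, at no cost in generality. The paper's version has the minor advantage of exercising Lemma~\ref{lem:Sormani} early, but as a proof of this particular corollary yours is the more economical one.
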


\begin{proof}
     if $t_p'\ge t_p$, then since $t_p'=b_\gamma(p)\le \dist_g (p,\gamma(0))$ and $$t_p=\dist_g(\gamma(0),\gamma(t_p))\ge \dist_g(\gamma(0),p)-\dist_g(p,\gamma(t_p))>\dist_g(\gamma(0),p)-R\ge t_p'-R,$$ we see immediately $0\le t_p'-t_p\le R$. On the other hand if $t_p\ge t_p'$, then by Lemma \ref{lem:Sormani} we have $t_p-t_p'=\dist_g(p,b_\gamma^{-1}(t_p))\le \dist_g(p,\gamma(t_p))\le R$, as claimed. %It follows $\dist_g(\gamma(t_p'),p)\le 2D$.
\end{proof}

We also mention a result that directly follows from Sormani's work \cite{SormaniMiniVol}.

\begin{lemma}\label{lem:bddBuse}
     Let $(M,g)$ be an open $n$-manifold with $\Ric_g\ge 0$. Let $\gamma$ be a ray in $M$ and $b_\gamma$ is the associated Busemann function. Suppose $M$ has linear volume growth \eqref{eq:linearvol}. Then either $M$ splits or $M$ has only one end and for each $T\in \R$, $\{b_\gamma\le T\}$ is compact. 
\end{lemma}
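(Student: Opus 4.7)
The plan is to combine Sormani's sublinear diameter growth for Busemann level sets with an ends dichotomy. First, I would show that every level set $\{b_\gamma=r\}$ is compact. Sormani's work \cites{SormaniMiniVol,SormaniSublinear} gives that $\mr{diam}(\{b_\gamma=R\})<\infty$ for some (indeed all sufficiently large) $R>0$, and Lemma \ref{lem:Sormani} then propagates finite diameter to every lower level via $\mr{diam}(\{b_\gamma=r\})\le\mr{diam}(\{b_\gamma=R\})+2(R-r)$. Since $\{b_\gamma=r\}$ is closed, has finite diameter, and (when $r\ge 0$) contains $\gamma(r)$, it is compact whenever nonempty.

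The splitting alternative will come from counting ends. If $M$ has at least two ends, then the classical line construction---take minimizing segments joining divergent sequences in two distinct ends, which must cross any compact separator of the ends, and extract an Arzel\`a--Ascoli limit based at a point in the separator---produces a line in $M$, and Cheeger--Gromoll's splitting theorem yields the desired splitting. So assume $M$ has exactly one end; it will then suffice to prove that $b_\gamma$ is bounded below, because for any $T$ and any $x\in\{b_\gamma\le T\}$, Lemma \ref{lem:Sormani} gives $\dist(x,\{b_\gamma=T\})=T-b_\gamma(x)\le T-\inf_M b_\gamma$, placing $\{b_\gamma\le T\}$ inside a bounded tubular neighborhood of the compact set $\{b_\gamma=T\}$, hence making it compact.

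For the lower bound, suppose for contradiction that $p_i\in M$ satisfy $b_\gamma(p_i)\to-\infty$; by the $1$-Lipschitz property of $b_\gamma$, $\dist(p_i,\gamma(0))\ge|b_\gamma(p_i)|\to\infty$, so $\{p_i\}$ diverges to infinity. Then $C\defeq\{b_\gamma\le 0\}$ is unbounded (it contains $\{p_i\}$), $C'\defeq\{b_\gamma\ge 0\}$ is unbounded (it contains $\gamma([0,\infty))$), and $C\cap C'=\{b_\gamma=0\}$ is compact by the first step. Choosing $R$ large enough that $B_R(\gamma(0))\supset\{b_\gamma=0\}$, the complement $M\setminus B_R(\gamma(0))$ splits into the disjoint unbounded pieces $C\setminus B_R(\gamma(0))$ and $C'\setminus B_R(\gamma(0))$ (any connected subset meeting both would cross $\{b_\gamma=0\}\subset B_R(\gamma(0))$ by continuity of $b_\gamma$), forcing $M$ to have at least two ends---contradicting the one-end assumption. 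The delicate step is the first one: extracting finite-diameter level sets from Sormani's sublinear growth result, since every subsequent argument rests entirely on having the compact separator $\{b_\gamma=0\}$ in hand.
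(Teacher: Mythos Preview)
Your proof is correct and follows essentially the same skeleton as the paper's: compactness of the level sets of $b_\gamma$ (via Sormani) together with a finite lower bound for $b_\gamma$ give that each sublevel set sits in a bounded neighborhood of a compact level set. The one substantive difference is that the paper simply invokes \cite{SormaniMiniVol}*{Corollary 23} for the fact that $\inf_M b_\gamma>-\infty$ when $M$ does not split, whereas you reprove this via the two-ends argument (unbounded $\{b_\gamma\le 0\}$ and $\{b_\gamma\ge 0\}$ separated by the compact $\{b_\gamma=0\}$ force two ends, hence a line). Your route is more self-contained; the paper's is shorter by outsourcing that step to Sormani. Both arrive at the same compactness estimate $\{b_\gamma\le T\}\subset B_D(\gamma(T))$ via Lemma~\ref{lem:Sormani}.
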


\begin{proof}
    %We may assume $\{b_\gamma\le T\}$ is not empty. If $M$ does not split, take any $x\in \{b_\gamma\le T\}$, let $t_x\ge 0$ be such that $\dist_g(x,\gamma)=\dist_g(x,\gamma(t_x))\le D$ where $D>0$ comes from Lemma \ref{lem:bddDist}, then since $b_\gamma$ is $1$-Lipschitz, it follows $t_x\le b_\gamma(x)+ \dist_g(\gamma(t_x,x))\le T+D$. We can estimate
    %\[
   %\dist_g(x,\gamma(T))\le \dist_g(x,\gamma(t_x))+|t_x-T|\le D+\max\{T,D\}
   % \]  
    If $M$ does not split, then it follows from \cite{SormaniMiniVol}*{Corollary 23} $m\defeq\inf_{x\in M}b_\gamma(x)>-\infty$ exists. Assume $T\ge m$ otherwise there is nothing to prove. Take $x\in \{b_\gamma\le T\}$. Notice that by Lemma \ref{lem:Sormani}, $ \dist_g(x, \{b_\gamma=T\})=T-b_\gamma(x)\le T-m$, and by \cite{SormaniMiniVol}*{Theorem 19} that $\{b_\gamma=T\}$ is compact, we can estimate that 
    \[
    \dist_g(x,\gamma(T))\le \dist_g(x, \{b_\gamma=T\})+\mr{diam}(\{b_\gamma=T\})\le T-m+\mr{diam}(\{b_\gamma=T\})\defeq D.
    \]
    Therefore $\{b_\gamma\le T\}\subset B_{D}(\gamma(T))$. Clearly $\{b_\gamma\le T\}$ is closed, so it is compact.
\end{proof}

Next we will see something that typically happens for nonnegative sectional curvature as discussed in the introduction. It in particular shows that the asymptotic cone of a noncompact manifold with nonnegative Ricci curvature and linear volume growth is either a ray or a line. It is frequently attributed to \cite{SormaniSublinear} which proves the sublinear diameter growth of Busemann function level sets. However, we were not able to find a direct statement nor a proof of this fact. We include a proof here to make explicit how this fact follows from \cite{SormaniSublinear}. %Provided the discussion above, the following corollary says that in a very restrictive case the equivalence between splitting in the manifold and in its asymptotic cone is possible with nonnegative Ricci curvature.

\begin{theorem}\label{cor:splitting}
    Let $(M,g)$ be an open $n$-manifold with $\Ric_g\ge 0$. Suppose $M$ has %noncollapsed ends \eqref{eq:ncends} and 
    linear volume growth \eqref{eq:linearvol}. Then the asymptotic cone of $M$ is unique and isometric to $\R$ or $[0,\infty)$. Moreover, $M$ splits if and only if the asymptotic cone is $\R$.
\end{theorem}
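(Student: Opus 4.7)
The plan is to leverage Sormani's sublinear diameter growth theorem for Busemann function level sets, together with Lemma \ref{lem:Sormani} and Lemma \ref{lem:bddBuse}, to pin down the one-dimensional nature of the asymptotic cone.

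First I would fix any ray $\gamma$ and use $\gamma(0)$ as the basepoint. The content of Lemma \ref{lem:Sormani} combined with \cite{SormaniSublinear} is that every $q\in M$ lies within distance $\mr{diam}(\{b_\gamma=b_\gamma(q)\})=o(b_\gamma(q))$ from $\gamma(b_\gamma(q))$ (both points lying in the same level set). The dichotomy provided by Lemma \ref{lem:bddBuse} then splits the argument into two cases.

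If $M$ does not split, then $m\defeq\inf_M b_\gamma$ is finite by \cite{SormaniMiniVol}*{Corollary 23}. For any $r_i\to\infty$ I would examine the rescaled pointed spaces $(M,r_i^{-1}\dist_g,\gamma(0))$. For any $q$ at rescaled distance $\le R$, the estimate $b_\gamma(q)\in[m,Rr_i]$ holds, and the rescaled distance from $q$ to $\gamma(b_\gamma(q))$ is at most $\mr{diam}(\{b_\gamma=b_\gamma(q)\})/r_i=o(1)$ uniformly in $b_\gamma(q)\le Rr_i$ by sublinear growth. Hence every $q$ collapses onto the rescaled image of $\gamma$; since $\gamma(sr_i)$ sits at rescaled position $s\in[0,R]$ and the lower bound $m/r_i\to 0$ is absorbed, the closed $R$-ball in the limit is $[0,R]$, giving asymptotic cone $[0,\infty)$.

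If $M$ splits as $\R\times N$, I would take $\gamma(t)=(t,q_0)$; its Busemann function is $b_\gamma(s,q)=s$ and the level sets $\{s\}\times N$ are compact by \cite{SormaniMiniVol}*{Theorem 19}, forcing $N$ to be compact. The asymptotic cone of $\R\times N$ with $N$ compact is manifestly $\R$. In both cases the identification is independent of the rescaling sequence, yielding uniqueness; the ``moreover'' statement then follows by comparing the two cases, since $\R$ is achieved exactly when $M$ splits. The main technical input is the uniform sublinear diameter estimate inherited as a black box from \cite{SormaniSublinear}; once that is in hand, the rest is mechanical bookkeeping with Lemma \ref{lem:Sormani}.
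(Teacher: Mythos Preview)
Your proposal is correct and follows essentially the same approach as the paper: both arguments hinge on Sormani's sublinear diameter growth of Busemann level sets to show that, after rescaling, every point collapses onto the image of a fixed ray. The only structural difference is that the paper packages the non-splitting case as a proof by contradiction (assume a limit point off $\gamma_\infty$ exists, then derive $\dist_g(p_i,\gamma(t_i'))/r_i\to 0$), whereas you give the equivalent direct estimate; your treatment of the splitting case is also slightly more explicit than the paper's one-line ``obviously'', since you actually verify that $N$ is compact via Sormani's Theorem 19.
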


\begin{proof}
    If $M$ splits then obviously the asymptotic cone is unique and isometric to $\R$ with Euclidean metric. It suffices to show that if $M$ does not split then any asymptotic cone is isometric to a half line. Let $p\in M$, and suppose that for a sequence of radii $r_i\to \infty$, $(M,r_i^{-2}g,p)$ pGH converges to $(C,\dist, o)$. Take a ray $\gamma$ with $\gamma(0)=p$. By definition $\gamma$ converges under the rescaled metric $r_i^{-2}g$ to a ray $\gamma_\infty\subset C$, in the sense that $(M,r_i^{-2}g,\gamma(r_i t))\to (C,\dist, \gamma_\infty(t))$ for any $t\in[0,\infty)$. In particular $\gamma_\infty(0)=o$. We claim that $\gamma_\infty=C$. If not, there is a point $p_\infty\in C$ but not on $\gamma_\infty$. Let $a\defeq\dist(p_\infty,\gamma_\infty(0))>0$. Then there exists $\{p_i\in M\}_{i\in \N}$ so that $(M,r_i^{-2}g,p_i)\to (C,\dist, p_\infty)$ and $\dist_g(p_i,\gamma(0))\in [\frac i{i+1} ar_i,\frac {i+2}{i+1} ar_i]$. Let $t_i'\defeq b_\gamma(p_i)$. Since $M$ does not split, we see from Lemma \ref{lem:bddBuse} that $t_i'\to\infty$, so we can assume $t_i'\ge 0$. Furthermore, by Corollary \ref{cor:LevelClose} we have $|t_i'-0|\le\frac{i+2}{i+1}ar_i$, which in turn gives 
    \[
    \limsup_{i\to\infty} \frac {t_i'} {r_i}\in [0, a].
    \]
    It follows that
    \[
    \limsup_{i\to\infty} \frac{\dist_g(p_i,\gamma(t_i'))}{r_i}\le \limsup_{i\to\infty} \frac{\mr{diam}_g(b_\gamma^{-1}(t_i'))}{t_i'}\frac{t_i'}{r_i}=0.
    \]
    The last equality comes from \cite{SormaniSublinear}*{Theorem 1}. Notice that for some $t_\infty\in[0,a]$, up to subsequence $(M,r_i^{-2}g, \gamma(t_i'))\to (C,\dist,\gamma_\infty(t_\infty))$. Meanwhile $p_i\to p_\infty$, we see that $\dist(p_\infty,\gamma_\infty(t_\infty))=0$, a contradiction to our assumption that $p_\infty$ is not on $\gamma_\infty$.
    %If $M$ splits then obviously the asymptotic cone is unique and isometric to $\R$. We assume that for some $r_i\nearrow\infty$ and $p\in M$, $(M,r_i^{-2}g,p)$ pGH converges to $(\R,|\cdot|,0)$, which comes from our assumption and the fact that any asymptotic cone of $M$ can only be a line or a ray \cite{SormaniSublinear}. Then there is a ray $\gamma$ emanating from $p$ that converges to $\R_{\ge 0}$. There are points $p_i$ so that $p_i\to -1$ along the convergence $(M,r_i^{-1}g,p)\to (\R,|\cdot|,0)$. It is easily seen that $\dist_g(p_i,\gamma(0))\to \infty$ and there exists $t_i\in [0,\infty)$ such that $\dist_g(p_i,\gamma)=\dist_g(p_i,\gamma(t_i))\ge |\dist_g(p_i,\gamma(0))-t_i|$. We show that $|\dist_g(p_i,\gamma(0))-t_i|$ must tend to $\infty$, then by Proposition \ref{prop:bddDist} $M$ splits a line. Assume to the contrary there exists $C<\infty$ so that for every $i$, $|\dist_g(p_i,\gamma(0))-t_i|<C$. Divide by $\dist_g(p_i,\gamma(0))$ and let $i\to\infty$, it follows that $\lim_{i\to \infty}{t_i/r_i}=1$. Then $\gamma(t_i)\to 1$ along the convergence $(M,r_i^{-1}g,p)\to (\R,|\cdot|,0)$. Then $r_i^{-1}\dist_g(p_i,\gamma(t_i))\to 2$ whereas $r_i^{-1}\dist_g(p_i,\gamma(0))\to 1$ as $i\to \infty$. It follows that for large $i$,  $\dist_g(p_i,\gamma)=\dist_g(p_i,\gamma(t_i))> \dist(p_i,\gamma(0))$ which is absurd. Now that we have shown that $M$ splits, again the asymptotic cone is unique and is isometric to $\R$.
\end{proof}

\subsection{The almost spliting theorem}\label{sec:AST}
We then recall some essentials of Cheeger--Colding's almost splitting theorem as it is the fundamental tool we rely on in the next section. The following discussion will be largely based on Cheeger's book \cite{CheegerRicBook}. We will only be concerned with the case of nonnegative Ricci curvature and the dimension $n\ge 2$. 

Let $(M^n,g)$ be an $n$-manifold with $\Ric_g\ge0$. For two given points $q^-,q^+\in M$ we define the excess of a point $p\in M$ with respect to $q^-$, $q^+$ as $e(p)\defeq \dist_g(p,q^-)+\dist_g(p,q^+)-\dist_g(q^-,q^+)$. We now describe the setting of the almost splitting at $p$. Let $L,\eps>0$ be such that 
\begin{align}
            \dist(p,q^+),\dist(p,q^-)&>L; \label{eq:distance}\\
                          e(p)&<\eps. \label{eq:excess}
\end{align}
Denote by $\Psi(\eps_1,\ldots,\eps_k|c_1,\ldots,c_m)$ a nonnegative error function so that when $c_1,\ldots, c_m$ are fixed, $\lim_{\eps_1,\ldots, \eps_k\to 0^+}\Psi=0$. Notice that $\Psi$ may change from line to line.

 Let $b^+(x)=\dist_g(p,q^+)-\dist_g(x,q^+)$. Fix a radius $R>0$. The Abresch-Gromoll inequality provides an upper bound for the excess function on $B_R(p)$.
 
 \begin{lemma}[\cite{AG},\cite{CheegerRicBook}*{Theorem 9.1}, \cite{XuLocalestimates}*{Lemma 2.2}]\label{lem:AbrGro}
    Let $(M^n,g)$ be an $n$-manifold with $\Ric_g\ge 0$. If \eqref{eq:distance}, \eqref{eq:excess} are satisfied then $\sup_{x\in B_R(p)}e(x)\le \Psi(\eps, L^{-1}|R,n)$. 
    
  %  Moreover, if $\eps\le \frac{2R^2}{nL}$ and $L\ge 2^{2n+1}R$, then there is an explicit bound $$\sup_{x\in B_R(p)}e(x)\le 2^6\left(\frac{2R}{L}\right)^{\frac{1}{n-1}}R.$$
   % In this case, we can take the error function to be $\Psi( R^2 L^{-1}|n)$.
 \end{lemma}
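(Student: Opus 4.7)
The plan is to combine the Laplacian comparison theorem (to bound $\Delta e$ from above) with an integrated mean value inequality (to bound $\dashint_{B_R(p)} e$), and then to upgrade to a pointwise bound using the $2$-Lipschitz property of $e$ together with Bishop-Gromov volume comparison.

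First, $e(x) \ge 0$ on all of $M$ by the triangle inequality, so only the upper bound is at issue. By the Laplacian comparison theorem under $\Ric_g \ge 0$, we have $\Delta \dist_g(\cdot, q^\pm) \le (n-1)/\dist_g(\cdot, q^\pm)$ in the barrier sense away from $q^\pm$. For any $x \in B_R(p)$, the triangle inequality together with \eqref{eq:distance} gives $\dist_g(x, q^\pm) \ge L - R$; assuming $L > R$ (otherwise the conclusion is vacuous by enlarging $\Psi$), we obtain
\[
\Delta e \le \frac{2(n-1)}{L - R} \eqdef C
\]
on $B_R(p)$ in the barrier sense, where I abbreviate the constant as $C$ for convenience.

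Second, I would apply an integrated mean value inequality: since $e \ge 0$ and $\Delta e \le C$ on $B_R(p)$ with $e(p) < \varepsilon$, a comparison of $e$ with an appropriate model-space radial function solving $\Delta = C$ (or equivalently, integration against the heat kernel together with Li--Yau bounds valid under $\Ric_g \ge 0$) yields
\[
\dashint_{B_R(p)} e \le \varepsilon + C_n \frac{R^2}{L - R}
\]
for a dimensional constant $C_n$. This is a classical consequence of $e(p)$ being small and $\Delta e$ being bounded above on the ball.

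Third, I would upgrade this $L^1$ average bound to a pointwise bound. Since $e$ is $2$-Lipschitz as the sum of two $1$-Lipschitz functions, if $e(y_0) = M$ at some $y_0 \in B_R(p)$ then $e \ge M/2$ on $B_{M/4}(y_0) \subseteq B_{2R}(p)$ (we may assume $M \le 4R$, else $M$ is already absorbed into $\Psi$). Hence
\[
\tfrac{M}{2}\, \volg(B_{M/4}(y_0)) \le \int_{B_{2R}(p)} e \le \volg(B_{2R}(p)) \left(\varepsilon + C_n \frac{R^2}{L - R}\right).
\]
Bishop--Gromov volume comparison applied at $y_0$ (with $B_{2R}(p) \subseteq B_{3R}(y_0)$) gives $\volg(B_{2R}(p))/\volg(B_{M/4}(y_0)) \le c_n (R/M)^n$, so after rearrangement
\[
M^{n+1} \le c_n' R^n \left(\varepsilon + \frac{R^2}{L - R}\right).
\]
This bound is of the claimed form $\Psi(\varepsilon, L^{-1} | R, n)$ and vanishes as $\varepsilon, L^{-1} \to 0$ with $R$ and $n$ held fixed.

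The main obstacle is the rigorous justification of the integrated mean value inequality in Step 2. While the estimate is classical, its proof in the barrier sense requires careful handling of the singularities of $\dist_g(\cdot, p)$ and $\dist_g(\cdot, q^\pm)$ across their respective cut loci. This is standardly addressed either by upper-barrier arguments in the sense of Calabi or by approximating $e$ with heat-regularized smooth functions before passing to the limit; either route is technical but well-developed in the literature (see e.g.\ \cite{CheegerRicBook}).
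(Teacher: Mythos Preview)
The paper does not prove this lemma; it is quoted with references to Abresch--Gromoll, Cheeger's book, and Xu. Your argument is correct in outline but differs from the classical proof in those references. The original Abresch--Gromoll argument (and Cheeger's exposition) proceeds via a direct maximum-principle comparison: one constructs a radial barrier $G(\dist_x(\cdot))$ on an annulus about the point $x$ where one wishes to estimate $e$, with $G$ built from $r^2$ and $r^{2-n}$ terms so that $\Delta G \ge \Delta e$ in the comparison sense, arranges the boundary values so that $G \ge e$ on the annulus boundary, and reads off $e(x)$ from $G$; this yields the sharper bound $e(x)\le \eps + c_n (R^n/L)^{1/(n-1)}$. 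Your route --- $L^1$ average bound via heat flow and Li--Yau, then pointwise via the Lipschitz property and Bishop--Gromov --- is closer in spirit to De~Giorgi--Moser iteration; it gives the weaker exponent $1/(n+1)$ but is more robust, using only that $e\ge 0$, $e$ is Lipschitz, $\Delta e$ is small, and $e(p)$ is small.

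Your Step~2 is indeed the delicate point, and the concern you raise is real: the bound $\Delta e \le 2(n-1)/(L-R)$ holds only on $B_R(p)$, not globally, so one cannot run the heat flow naively. A localization by cutoff introduces an error of order $R^{-1}$ from $\nabla\phi\cdot\nabla e$, which does not vanish as $\eps,L^{-1}\to 0$. The clean fix is to run the global heat flow on $M$ and use instead the global bound $\Delta e \le (n-1)\bigl(\dist(\cdot,q^+)^{-1}+\dist(\cdot,q^-)^{-1}\bigr)$: the contribution to $\partial_t h_t e(p)$ from outside $B_{L/2}(p)$ is then controlled by the Li--Yau Gaussian upper bound times the (integrable) singularity $\dist(\cdot,q^\pm)^{-1}$ near $q^\pm$, and is exponentially small in $L/R$.
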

 
 We define the harmonic replacement of $b^+$ on $B_R(p)$ by solving the Dirichlet problem
\begin{equation*}
    \begin{cases}
        \Delta\mathbf{b}^+&=0\ , \ \text{in $B_{4R}(p)$;}\\
        \mathbf{b}^+&=b^+,\ \text {on $\partial B_{4R}(p)$.}
    \end{cases}
\end{equation*}
Similar constructions can be done when the superscript $+$ is replaced by $-$, but we will not use them. 

\begin{lemma}[\cite{CheegerRicBook}*{Lemma 9.8},\cite{XuLocalestimates}*{(2.12)}]\label{lem:HarmApprox}
    Let $(M^n,g)$ be an $n$-manifold with $\Ric_g\ge 0$. If \eqref{eq:distance}, \eqref{eq:excess} are satisfied then $\sup_{B_{2R}(p)}|\mathbf{b}^+-b^+|\le \Psi(\eps,L^{-1}|R,n)$. 
    
   % Moreover, if $e(p)\le \frac{32R^2}{nL}$ and $L\ge 2^{2n+2}R$, then there is an explicit bound
    %$$\sup_{B_{2R}(p)}|\mathbf{b}^+-b^+|\le 2^{3n}\left(\frac{2R}L\right)^{\frac{1}{n-1}}R.$$ 
   % In this case, we can take the error function to be $\Psi( R^2 L^{-1}|n)$.
\end{lemma}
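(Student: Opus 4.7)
The plan is to sandwich $b^+$ between harmonic functions using Laplacian comparison, the Abresch--Gromoll bound of Lemma \ref{lem:AbrGro}, and the maximum principle. Writing $r^{\pm}(x)\defeq \dist_g(x,q^{\pm})$, Laplacian comparison under $\Ric_g\ge 0$ yields $\Delta r^{\pm} \le (n-1)/r^{\pm}$ distributionally on $M\setminus\{q^{\pm}\}$, so on $B_{4R}(p)$, where $r^{\pm}\ge L-4R$ (assuming $L^{-1}$ is small enough that $L>4R$, absorbed into $\Psi$), one obtains $\Delta b^+ \ge -c$ and $\Delta(-b^-) \le c$ with $c\defeq (n-1)/(L-4R) = \Psi(L^{-1}|R,n)$. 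Thus $b^+$ is almost subharmonic and $-b^-$ is almost superharmonic. Moreover, the identity $b^+(x) + b^-(x) = e(p)-e(x)$ combined with \eqref{eq:excess} and Lemma \ref{lem:AbrGro} applied on $B_{5R}(p)$ gives $|b^+ + b^-|\le \Psi(\eps,L^{-1}|R,n)$ throughout $B_{4R}(p)$.

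For the upper bound I would let $\phi$ be the weak solution of $\Delta\phi = -1$ on $B_{4R}(p)$ with $\phi = 0$ on the boundary; a standard comparison against the Euclidean model together with volume doubling yields $0\le\phi\le C(n)R^2$. Then $u \defeq b^+ - \mathbf{b}^+ - c\phi$ is a weakly subharmonic function vanishing on $\partial B_{4R}(p)$, so $u\le 0$ by the maximum principle, hence $b^+ - \mathbf{b}^+ \le c\phi \le \Psi(L^{-1}|R,n)$. For the lower bound I would introduce $\tilde{\mathbf{b}}$, the harmonic replacement of $-b^-$ on $B_{4R}(p)$; running the same argument with signs reversed gives $-b^- \ge \tilde{\mathbf{b}} - \Psi$. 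Since $\mathbf{b}^+ - \tilde{\mathbf{b}}$ is harmonic with boundary values equal to $b^+ + b^-$ of size at most $\Psi$, the maximum principle for harmonic functions gives $|\mathbf{b}^+ - \tilde{\mathbf{b}}|\le \Psi$ on all of $B_{4R}(p)$. Chaining these estimates with $b^+ \ge -b^- - \eps$ produces $b^+ \ge \tilde{\mathbf{b}} - \Psi - \eps \ge \mathbf{b}^+ - \Psi$, completing the two-sided estimate on $B_{2R}(p)\subset B_{4R}(p)$.

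The main technical obstacle is the distributional nature of Laplacian comparison across the cut loci of $q^{\pm}$: the maximum principle applications above must be interpreted either in the barrier/viscosity sense or via Calabi's trick of smoothing $r^{\pm}$ from above, after which standard weak-solution theory suffices. Equivalently, one may decompose $\Delta b^{\pm}$ as a smooth function plus a signed Radon measure whose sign is favorable in the Green's function representation of $u$. A secondary bookkeeping point is that Lemma \ref{lem:AbrGro} is stated on $B_R(p)$, so to control $b^+ + b^-$ on the enlarged ball $B_{4R}(p)$ one simply reapplies that lemma with radius parameter $5R$, absorbing the mild dependence of the resulting constant into the single error function $\Psi(\eps,L^{-1}|R,n)$.
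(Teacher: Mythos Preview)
The paper does not supply its own proof of this lemma; it is quoted verbatim from \cite{CheegerRicBook}*{Lemma~9.8} and \cite{XuLocalestimates}*{(2.12)}, and your outline is precisely the classical argument found in those references: Laplacian comparison makes $b^+$ almost subharmonic and $-b^-$ almost superharmonic, Abresch--Gromoll ties the two together via $b^++b^-=e(p)-e(\cdot)$, and the maximum principle against the Dirichlet solution $\phi$ of $\Delta\phi=-1$ closes the two-sided estimate.

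One remark on the step you label ``standard'': the upper bound $\sup_{B_{4R}(p)}\phi\le C(n)R^2$ under $\Ric_g\ge 0$ is correct but is \emph{not} obtained by naive comparison with the Euclidean torsion function $(16R^2-|x|^2)/(2n)$, since Laplacian comparison only gives $\Delta d_p^2\le 2n$ (the wrong sign for that purpose). It is the expected--exit-time estimate, which one gets either from Li--Yau heat kernel bounds or, as in the cited references, by a more hands-on barrier built from $d(\cdot,q^{\pm})$ rather than $d(\cdot,p)$. This is where the genuine analytic content of the lemma sits, so it is worth naming the actual mechanism rather than folding it into ``volume doubling.''
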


The harmonic function $\mathbf{b}^+$ is called $(\Psi(\eps, L^{-1}|R,n),1)$-splitting function in the literature. It satisfies
\begin{enumerate}
    \item \cite{XuLocalestimates}*{(1.8)}. $\sup_{B_{2R}(p)}|\nabla \mathbf{b}^+|\le C(n)$ for some constant $C(n)>0$;
    \item \cite{CheegerRicBook}*{Lemma 9.10}. $\dashint_{B_{4R}(p)}|\nabla \mathbf{b}^+-1|^2\le \Psi(\eps, L^{-1}|R,n)$;
    \item \cite{CheegerRicBook}*{Lemma 9.13}. $R^2\dashint_{B_{2R}(p)}|\hess_{\mathbf{b}^+}|^2\le \Psi(\eps, L^{-1}|R,n)$.
\end{enumerate}
Under these conditions, it is noticed in several works that one can improve the gradient bound $\sup_{B_{2R}(p)}|\nabla \mathbf{b}^+|\le C(n)$ to
\begin{equation}\label{eq:SharpGradient}
    \sup_{B_{2R}(p)}|\nabla \mathbf{b}^+|\le 1+\Psi(\eps, L^{-1}|R,n).
\end{equation}
 See for example \cite{XuLocalestimates}*{Lemma 2.6} or \cite{HP23}*{Lemma 4.3} (substitute $L=\lambda=1$ therein).

The most important ingredient of the proof of the almost splitting theorem is the following almost Pythagorean theorem.

\begin{proposition}[\cite{CheegerRicBook}*{Lemma 9.16}]\label{prop:AlmostPyth}
    Let $x,z,w\in B_{R/2}(p)$, with $x\in (\mathbf{b}^+)^{-1}(a)$ for some $a\in \R$, and $z\in (\mathbf{b}^+)^{-1}(a)$ a choice of closest point to $w$, then
    \begin{equation}
        |\dist_g^2(x,z)+\dist_g^2(z,w)-\dist_g^2(x,w)|\le \Psi(\eps,L^{-1}|R,n).
    \end{equation}
\end{proposition}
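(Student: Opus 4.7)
The plan is to exploit the three properties of $\mathbf{b}:=\mathbf{b}^+$ listed before the statement: the sharp gradient bound $|\nabla \mathbf{b}|\le 1+\Psi$, the $L^2$-smallness of $|\nabla \mathbf{b}|^2-1$, and the $L^2$-smallness of $\hess_{\mathbf{b}}$. Set $\alpha:=a-\mathbf{b}(w)$ and assume without loss of generality $\alpha\ge 0$. The strategy is to reduce the almost Pythagorean identity to two statements: a direct control of $d(w,z)$, and the fact that a minimizing segment from $w$ to $x$ decomposes, up to a $\Psi$-error, into an approximately $\mathbf{b}$-vertical piece of length $\alpha$ and a piece tangent to the level set $\mathbf{b}^{-1}(a)$ of length $\sqrt{\dist_g(w,x)^2-\alpha^2}$.

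First I would estimate $\dist_g(w,z)$. Since $z\in\mathbf{b}^{-1}(a)$ is closest to $w$, the sharp gradient bound gives $\alpha=|\mathbf{b}(w)-\mathbf{b}(z)|\le (1+\Psi)\,\dist_g(w,z)$. Conversely, flowing $w$ by $\nabla \mathbf{b}/|\nabla \mathbf{b}|^2$ for parameter time $\alpha$ (which stays in $B_R(p)$ for small enough $\Psi$) yields a curve of length at most $\alpha(1+\Psi)$ ending in $\mathbf{b}^{-1}(a)$, so $\dist_g(w,z)\le\alpha(1+\Psi)$. Squaring yields $\dist_g(w,z)^2=\alpha^2+\Psi$, where here and below $\Psi$ absorbs multiplicative factors depending on the fixed constants $R$ and $n$. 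For the remaining comparison let $\gamma:[0,\dist_g(w,x)]\to M$ be a unit-speed minimizing geodesic from $w$ to $x$ and set $f(t):=\mathbf{b}(\gamma(t))$, so that $f'(t)=\langle\nabla\mathbf{b},\gamma'(t)\rangle$ and $f''(t)=\hess_{\mathbf{b}}(\gamma',\gamma')$; in particular $f(\dist_g(w,x))-f(0)=\alpha$.

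To convert the $L^2$ Hessian and gradient bounds into pointwise control along this specific $\gamma$, I would invoke Cheeger--Colding's segment inequality: after replacing $x$ and $w$ by nearby points $\tilde x,\tilde w$ in small balls (still inside $B_{R/2}(p)$ since $R$ is fixed and $\Psi$ is small), one may arrange that along the minimizing segment between them both $\int |\hess_{\mathbf{b}}|\,dt$ and $\int \bigl||\nabla\mathbf{b}|^2-1\bigr|\,dt$ are bounded by $\Psi$. This forces $f'$ to be almost constant along $\gamma$, so $f'(t)=\alpha/\dist_g(\tilde w,\tilde x)+\Psi$. Decomposing $\gamma'=f'(t)\,\nabla\mathbf{b}/|\nabla\mathbf{b}|^2+v$ with $v\perp\nabla\mathbf{b}$ and using $|\gamma'|^2=1$ together with $|\nabla\mathbf{b}|^2=1+\Psi$ yields $|v|^2=1-(\alpha/\dist_g(\tilde w,\tilde x))^2+\Psi$. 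Integrating $v$ traces a curve of length $\sqrt{\dist_g(\tilde w,\tilde x)^2-\alpha^2}+\Psi$ whose projection onto the level set $\mathbf{b}^{-1}(a)$ lands within $\Psi$ of $\tilde x$ starting near the projection of $\tilde w$, which by Step 1 agrees with $z$ up to $\Psi$. Minimality of $\dist_g(z,\tilde x)$ on $\mathbf{b}^{-1}(a)$ and the triangle inequality give $\dist_g(z,\tilde x)^2=\dist_g(\tilde w,\tilde x)^2-\alpha^2+\Psi$, and passing back from $\tilde w,\tilde x$ to $w,x$ with the $1$-Lipschitz continuity of the distance function closes the estimate together with Step 1.

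The main obstacle I expect is the last geometric bookkeeping: turning the almost-constancy of $f'$ and the approximate unit-normality of $\nabla\mathbf{b}$ into an honest Pythagorean comparison at a specific pair of points, which requires both that the perturbation $(\tilde w,\tilde x)\to (w,x)$ not spoil the $\Psi$-estimates and that the endpoint of the orthogonal integral curve on $\mathbf{b}^{-1}(a)$ be controllably close to $z$. This is precisely the delicate part of Cheeger's Lemma 9.16, and I would follow his averaging argument via the segment inequality verbatim to close the bookkeeping.
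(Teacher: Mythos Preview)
The paper does not give its own proof; the proposition is quoted from \cite{CheegerRicBook}*{Lemma 9.16}, and the only comment the paper makes about the argument is that it yields the intermediate estimate $|\dist_g(w,z)-(\mathbf{b}^+(w)-\mathbf{b}^+(z))|\le\Psi$ (equation \eqref{eq:DistLevelClose}) as a consequence of Lemmas~\ref{lem:AbrGro} and~\ref{lem:HarmApprox}. Your overall strategy---use the segment inequality to convert the $L^2$ Hessian and gradient bounds into control along a generic minimizing geodesic, then do geometric bookkeeping---is indeed Cheeger's.

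Two steps of your outline would not work as written. First, your upper bound $\dist_g(w,z)\le\alpha(1+\Psi)$ via the flow of $\nabla\mathbf{b}/|\nabla\mathbf{b}|^2$ presupposes a pointwise lower bound on $|\nabla\mathbf{b}|$, which is not among the hypotheses: you only have the upper bound $|\nabla\mathbf{b}|\le 1+\Psi$ and the $L^2$ bound on $|\nabla\mathbf{b}|^2-1$, and harmonic functions can have critical points. The route the paper singles out avoids this entirely: follow the minimizing geodesic from $w$ toward $q^+$, along which the distance-based function $b^+$ grows with unit speed, and transfer to $\mathbf{b}^+$ via Lemma~\ref{lem:HarmApprox}. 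Second, ``integrating $v$'' is not a well-defined construction: $v$ is the component of $\gamma'$ orthogonal to $\nabla\mathbf{b}$, hence a vector field along the single curve $\gamma$, not a vector field on $M$ or on the level set, so it cannot be integrated to produce a curve landing in $\mathbf{b}^{-1}(a)$. Cheeger's closing argument compares triangle side-lengths directly from the almost-constancy of $f'$ and a first-variation computation rather than by any such projection. Since you explicitly defer to Cheeger for this last step the plan is recoverable, but the sketch does not stand on its own.
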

It is worth mentioning that in its proof, we have the following relation between distances and levels of $\mathbf{b}^+$, as shown in the inequality between $(9.20)$ and $(9.21)$ of \cite{CheegerRicBook} as a consequence of Lemma \ref{lem:AbrGro} and Lemma \ref{lem:HarmApprox}, for $w,z$ in Proposition \ref{prop:AlmostPyth}, it holds  
\begin{equation}\label{eq:DistLevelClose}
    |\dist_g(w,z)-(\mathbf{b}^+(w)-\mathbf{b}^+(z))|\le \Psi(\eps,L^{-1}|R,n).
\end{equation}
%Moreover,
%\begin{equation}\label{eq:DistLevelClose2}
  %  \text{ if $e(p)\le \frac{R^2}{nL}$ and $L\ge 2^{2n+2}R$, we have}\  |\dist_g(w,z)-(\mathbf{b}^+(w)-\mathbf{b}^+(z))|\le \Psi( R^2 L^{-1}|n).
%\end{equation}

We now state the almost splitting theorem. 
\begin{theorem}[\cite{CheegerRicBook}*{Theorem 9.25}]\label{thm:AlmostSplitting}
    Let $(M^n,g)$ be a $n$-manifold with $\Ric_g\ge 0$. Let $R>0$, $L>4R+1$, $\eps\in (0,1)$ so that $\eqref{eq:distance}$, $\eqref{eq:excess}$ are satisfied. Set 
    \begin{align*}
         F: B_R(p)&\to \R\times  (\mathbf{b}^+)^{-1}(0)\\
              w &\mapsto (\mathbf{b}^+(w),z),
    \end{align*}
  where $z\in (\mathbf{b}^+)^{-1}(0)$ is a closest point to $w$. Equip $(\mathbf{b}^+)^{-1}(0)$ with the ambient metric $\dist_g$ and $\R$ with the standard Euclidean metric. Take the product metric on $\R\times  (\mathbf{b}^+)^{-1}(0)$. Then $F$ is a $\Psi(\eps,L^{-1}|R,n)$-Gromov-Hausdorff approximation between $B_R(p)$ and $B_R((0,x))\subset \R\times (\mathbf{b}^+)^{-1}(0)$ where $x$ is a choice of closest point to $p$ and $\R\times (\mathbf{b}^+)^{-1}(0)$.
\end{theorem}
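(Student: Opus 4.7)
The plan is to verify directly that $F$ is a $\Psi(\eps, L^{-1}|R, n)$-Gromov--Hausdorff approximation, which requires two things: an almost-isometry estimate
\[
\bigl|\dist_g(w_1, w_2) - \sqrt{(\mathbf{b}^+(w_1) - \mathbf{b}^+(w_2))^2 + \dist_g(z_1, z_2)^2}\bigr| \le \Psi
\]
for any $w_1, w_2 \in B_R(p)$, and $\Psi$-density of $F(B_R(p))$ in $B_R((0,x)) \subset \R \times (\mathbf{b}^+)^{-1}(0)$. Writing $a_i \defeq \mathbf{b}^+(w_i)$ and letting $z_i$ denote a closest point to $w_i$ in $K \defeq (\mathbf{b}^+)^{-1}(0)$, the target identity (after squaring) is
\[
\dist_g(w_1, w_2)^2 = (a_1 - a_2)^2 + \dist_g(z_1, z_2)^2 \pm \Psi.
\]
Note that $\mathbf{b}^+(p) \approx b^+(p) = 0$ by Lemma \ref{lem:HarmApprox}, which is why the target ball is naturally centered at $(0, x)$.

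For the almost-isometry, the engine is Proposition \ref{prop:AlmostPyth} used iteratively. Assuming $a_1 \ge a_2$, I would first project $w_1$ to the intermediate level set $(\mathbf{b}^+)^{-1}(a_2)$, obtaining a closest point $w_1'$; applying Proposition \ref{prop:AlmostPyth} to the triple $(w, z, x) = (w_1, w_1', w_2)$ gives
\[
\dist_g(w_1, w_2)^2 \approx \dist_g(w_1, w_1')^2 + \dist_g(w_1', w_2)^2,
\]
while \eqref{eq:DistLevelClose} yields $\dist_g(w_1, w_1') \approx a_1 - a_2$. To identify $\dist_g(w_1', w_2) \approx \dist_g(z_1, z_2)$, I would project $w_1'$ down to $K$ obtaining some closest point $z_1'$, and then apply Proposition \ref{prop:AlmostPyth} with $(w, z, x) = (w_2, z_2, z_1')$, using \eqref{eq:DistLevelClose} to replace $\dist_g(w_2, z_2)$ and $\dist_g(w_1', z_1')$ by $a_2$. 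After these substitutions, the problem reduces to showing $\dist_g(z_1, z_1') \le \Psi$.

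This last point --- the consistency of the two-step projection $w_1 \to w_1' \to z_1'$ with the direct closest-point projection $w_1 \to z_1$ --- is the core technical step and, I expect, the main obstacle. The idea is that any near-minimizing geodesic $\sigma$ realizing the closest-point projection of $w_1$ to a level set of $\mathbf{b}^+$ is an approximate integral curve of $-\nabla\mathbf{b}^+$: by the sharp gradient bound \eqref{eq:SharpGradient} and the $L^2$-smallness of $\hess_{\mathbf{b}^+}$, $\mathbf{b}^+$ must decrease along $\sigma$ by its length modulo $\Psi$, which forces the crossing of $\sigma$ with level $a_2$ to be within $\Psi$ of $w_1'$ and hence exhibits $z_1$ as a near-closest point in $K$ to $w_1'$. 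A further application of Proposition \ref{prop:AlmostPyth} with $(w, z, x) = (w_1', z_1', z_1)$ then yields $\dist_g(z_1, z_1')^2 \le \Psi$, closing the distance estimate.

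Finally for $\Psi$-density in $B_R((0, x))$, given a target $(t, z)$ with $t^2 + \dist_g(z, x)^2 < R^2$, I would produce $w$ by moving $z$ a signed distance $|t|$ along a minimizing geodesic toward $q^+$ (or toward $q^-$ if $t<0$). The same gradient-flow analysis shows $\mathbf{b}^+(w) = t \pm \Psi$, the triangle inequality combined with the almost-isometry just proved places $w$ in $B_R(p)$, and the closest projection of $w$ back to $K$ is within $\Psi$ of $z$; hence $F(w)$ lies within $\Psi$ of $(t, z)$ in the product metric. Once the nested-projection consistency is in hand, everything else reduces to mechanical applications of Proposition \ref{prop:AlmostPyth}, \eqref{eq:DistLevelClose}, and the triangle inequality.
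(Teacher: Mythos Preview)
The paper does not give its own proof of this theorem: it is stated as a citation of \cite{CheegerRicBook}*{Theorem 9.25} and used as a black box, so there is no in-paper argument to compare against. Your sketch is essentially the standard Cheeger--Colding argument from that reference, built on iterated applications of the almost Pythagorean theorem (Proposition~\ref{prop:AlmostPyth}) together with \eqref{eq:DistLevelClose} and \eqref{eq:SharpGradient}; in particular your identification of the nested-projection consistency $\dist_g(z_1, z_1') \le \Psi$ as the crux is correct.

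One small gap worth flagging: the single application of Proposition~\ref{prop:AlmostPyth} you describe with $(w,z,x)=(w_2,z_2,z_1')$ yields $\dist_g(w_2,z_1')^2 \approx a_2^2 + \dist_g(z_2,z_1')^2$, which by itself does not produce the needed $\dist_g(w_1',w_2) \approx \dist_g(z_1',z_2)$. To close this you must also project back up from level $0$ to level $a_2$ (i.e.\ take the closest point $w_2''$ to $z_2$ on $(\mathbf{b}^+)^{-1}(a_2)$), apply Proposition~\ref{prop:AlmostPyth} on level $a_2$ to both $(z_2,w_2'',w_1')$ and $(z_2,w_2'',w_2)$, and use the second to show $w_2'' \approx w_2$. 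This is the same ``up--down'' consistency you already isolate for $z_1$ versus $z_1'$, just applied once more; after that, the rest of your outline goes through.
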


\begin{remark}\label{rmk:LevelsetConverge}
    We will also use the sequential version of the almost splitting theorem. Let $p_i, q_i^-, q_i^+$ be sequences of points so that the excess of $p_i$ with respect to $ q_i^-, q_i^+$ tends to $0$, and $L_i\defeq\min\{\dist_g(p_i,q_i^-),\dist_g(p_i,q_i^+)\}\to \infty$. Then $(M,g, p_i)$ pGH converges to $(\R\times X, \dist, (0,x))$. In particular, let $b_i^+=\dist_g(p_i,q_i^+)-\dist_g(\cdot,q_i^+)$ there exists some $R_i\to\infty$ with $L_i>4R_i+1$ so that if $\mathbf{b}_i^+$ is the harmonic replacement on $B_{R_i}(p_i)$ of $b_i^+$ and $x_i$ is a choice of closest point of $p_i$ to $(\mathbf{b}_i^+)^{-1}(0)$, then $(\{\mathbf{b}_i^+=0\},\dist_g, x_i)$ pGH converges to $(X,\dist, x)$. Here the distance $\dist$ on the product space $\R\times X$ restricts to $X$. 
\end{remark}

\begin{remark}
    If $(M,g)$ with $\Ric_g\ge0$ is also noncollapsed in the sense of \eqref{eq:ncends}, then the $X$ we get from Remark \ref{rmk:LevelsetConverge} is a noncollapsed $\RCD(0,n-1)$ ($\ncRCD(0,n-1)$ in short) space. To show this we need to incorporate the splitting theorem of $\RCD$ spaces \cite{Gigli_splitting} and the volume convergence theorem of De Philippis--Gigli \cite{DPG17}. We will use this observation without further mentioning the argument.
\end{remark}

To close this section, we derive two consequences of the almost splitting theorem along a given ray. The first is a quantitative way of saying that the $\R$-factor split off in the limit space at infinity comes from the ray along which the limit is take. The second is to bound the diameters of Busemann function level sets given the information at infinity.  %The intuition is that when we choose $q^-$ and $q^+$ along a ray, the $b^+$ function will almost be the Busemann function associated to the given ray and some conclusions about $\mathbf{b}^+$ can be translated to this Busemann function.

\begin{proposition}\label{prop:RayConverge}
    Let $(M^n,g)$ be an open $n$-manifold with $\Ric_g\ge 0$ and $\gamma:[0,\infty)\to M$ be a ray. If for a divergent sequence $t_i\to \infty$, $(M,g,\gamma(t_i))$ pGH converges to $(\R\times X, \dist, (0,x))$ then for any $a\in \R$, $\gamma(t_i+a)$ converges to $(a,x)$ along with this pGH convergence.
        %\item \label{item:FuncConv} A subsequence of $b_\gamma-t_i$ converges uniformly to the projection $b:\R\times X\to\R$.    
\end{proposition}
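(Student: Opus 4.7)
The plan is to apply the Cheeger--Colding almost splitting theorem at the basepoint $\gamma(t_i)$ using endpoints along the ray $\gamma$ itself, and then read off the image of $\gamma(t_i+a)$ under the resulting Gromov--Hausdorff approximation $F_i$ of Theorem \ref{thm:AlmostSplitting}. Fix $a\in\R$ and a radius $R>4|a|+2$. For each $i$, set $p_i=\gamma(t_i)$, $q_i^-=\gamma(0)$, and $q_i^+=\gamma(T_i)$ for a choice of $T_i$ with $T_i-t_i\to\infty$ (e.g.\ $T_i=2t_i$). Because $\gamma$ is a minimizing ray, the excess at $p_i$ vanishes exactly and $L_i\defeq\min(t_i,T_i-t_i)\to\infty$. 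Once $L_i>4R+1$, Theorem \ref{thm:AlmostSplitting} together with Remark \ref{rmk:LevelsetConverge} furnishes the $o(1)$-Gromov--Hausdorff approximation
\[
F_i\colon B_R(p_i)\to \R\times\{\mathbf{b}_i^+=0\},\quad F_i(w)=(\mathbf{b}_i^+(w),z_w),
\]
with $(\{\mathbf{b}_i^+=0\},\dist_g,x_i)$ pGH converging to $(X,\dist,x)$, where $x_i$ is a closest point in the zero level set to $p_i$, and this realizes the given pGH limit.

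Next, I would compute the first coordinate. Since $\gamma$ is a minimizing geodesic, for $i$ large so that $0\le t_i+a\le T_i$ one has
\[
b_i^+(\gamma(t_i+a))=\dist_g(p_i,q_i^+)-\dist_g(\gamma(t_i+a),q_i^+)=(T_i-t_i)-(T_i-t_i-a)=a,
\]
so Lemma \ref{lem:HarmApprox} gives $\mathbf{b}_i^+(\gamma(t_i+a))=a+o(1)$. For the second coordinate, let $z_i$ be a closest point in $\{\mathbf{b}_i^+=0\}$ to $\gamma(t_i+a)$. From \eqref{eq:DistLevelClose} applied at $p_i$ we get $\dist_g(p_i,x_i)=o(1)$, and applied at $\gamma(t_i+a)$ we get $\dist_g(z_i,\gamma(t_i+a))=|a|+o(1)$. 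Combining the triangle-inequality estimate $\dist_g(x_i,\gamma(t_i+a))=|a|+o(1)$ with the almost Pythagorean theorem (Proposition \ref{prop:AlmostPyth}) applied to $x=x_i$, $z=z_i$, and $w=\gamma(t_i+a)$ on level $0$ yields
\[
\dist_g^2(x_i,z_i)=\dist_g^2(x_i,\gamma(t_i+a))-\dist_g^2(z_i,\gamma(t_i+a))+o(1)=o(1).
\]
Hence $\dist_g(x_i,z_i)\to 0$, so $z_i\to x$ under the convergence $(\{\mathbf{b}_i^+=0\},x_i)\to(X,x)$, and $F_i(\gamma(t_i+a))\to(a,x)$, which is exactly the claim.

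The main technical subtlety will be verifying the level-distance estimate $\dist_g(z_i,\gamma(t_i+a))=|\mathbf{b}_i^+(\gamma(t_i+a))|+o(1)$ on both sides of the zero level, since \eqref{eq:DistLevelClose} as stated is one-sided in the sign of $\mathbf{b}_i^+(w)-\mathbf{b}_i^+(z)$; for $a<0$ one should invoke the analogous harmonic replacement $\mathbf{b}_i^-$ built from $q_i^-$, or repeat the Abresch--Gromoll--type comparison along the subray from $\gamma(t_i+a)$ toward $\gamma(0)$. The remainder is bookkeeping exploiting that $\gamma$ realizes the distance between any two of its points exactly, which is precisely what forces the excess at $p_i$ to vanish and the first coordinate to equal $a$ on the nose before harmonic replacement is applied.
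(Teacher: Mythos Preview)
Your proof is correct and follows essentially the same route as the paper: set up the almost splitting at $\gamma(t_i)$ with $q_i^\pm$ on the ray, identify the first coordinate of $F_i(\gamma(t_i+a))$ via Lemma \ref{lem:HarmApprox}, and control the second coordinate using \eqref{eq:DistLevelClose} together with the almost Pythagorean theorem. Your computation of the first coordinate is in fact slightly more direct than the paper's, since you exploit $b_i^+(\gamma(t_i+a))=a$ exactly (the paper instead routes through the gradient bound \eqref{eq:SharpGradient}); and you have correctly flagged the sign issue for $a<0$, which the paper dispatches at the outset by reducing to $a>0$.
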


\begin{proof}
We can assume $a>0$ otherwise we take $|a|$. Let $p_i=\gamma(t_i)$, $p'_i=\gamma(t_i+a)$, $q_i^-=\gamma(0)$, and $q_i^+=\gamma(T_i)$ for some $T_i>t_i+16a+8\to\infty$. Then we define $b_i^+(p)=(T_i-t_i)-\dist_g(p, q_i^+)$, and $\mathbf{b}_i^+$ to be the harmonic replacement of $b_i^+$ in $B_{16a+8}(p_i)$. Denote by $x_i\in (\mathbf{b}_i^+)^{-1}(0)$ a choice of closest point to $p_i$ and $z_i\in (\mathbf{b}_i^+)^{-1}(0)$ a choice of closest point to $p'_i$. From the construction in Theorem \ref{thm:AlmostSplitting}, it suffices to prove that $\mathbf{b}_i^+(p'_i)\to a$ and $z_i\to x$ along $(M,g,\gamma(t_i))\to (\R\times X, \dist, (0,x))$. 

it is clear that the excess of $\gamma(t_i)$ with respect to $q_i^-$, $q_i^+$ is $0$. In what follows we will use the error function $\Psi$ which is now independent of the excess and we abbreviate $\Psi(t_i^{-1},(T_i-t_i)^{-1}|a,n)$ as $\Psi_i$. We first estimate using \eqref{eq:DistLevelClose} and Lemma \ref{lem:HarmApprox} that
\[
\dist_g(p_i,x_i)\le |\mathbf{b}_i^+(p_i)-\mathbf{b}_i^+(x_i)| +\Psi_i=|\mathbf{b}_i^+(p_i)|+\Psi_i\le |b_i^+(p_i)|+2\Psi_i=2\Psi_i.
\]
Then,since $\dist_g(p'_i,p_i)=\dist_g(\gamma(t_i+a),\gamma(t_i))=a$, by the gradient estimate \eqref{eq:SharpGradient} we get 
\begin{equation}\label{eq:DistEst1}
    |\mathbf{b}_i^+(p'_i)-\mathbf{b}_i^+(p_i)|\le \int_{t_i}^{t_i+a}|\nabla\mathbf{b}_i^+(\gamma(s))||\gamma'(s)|d s \le (1+\Psi_i)a=a+\Psi_i,
\end{equation}
 and from Lemma \ref{lem:HarmApprox} we infer that 
 \begin{equation}\label{eq:DistEst2}
     |\mathbf{b}_i^+(p_i)|=|\mathbf{b}_i^+(p_i)-b_i^+(p_i)|\le \Psi_i.
 \end{equation}
  Combining together \eqref{eq:DistEst1} and \eqref{eq:DistEst2}, we have $|\mathbf{b}_i^+(p'_i)-a|\le 2 \Psi_i$. It in turn gives 
 \begin{equation}\label{eq:DistEst3}
     |\dist_g(p'_i,z_i)-a|\le 3\Psi_i,
 \end{equation}
   by \eqref{eq:DistLevelClose}. When $i$ is large we will have $a-3\Psi_i>0$ so $\dist_g^2(p'_i,z_i)\ge (a-3\Psi_i)^2$. 

We also observe that $z_i\in B_{2a+1}(p_i)$ for large $i$. Indeed, incorporating \eqref{eq:DistEst3} we have 
\begin{align*}
    \dist_g(p_i,z_i)\le \dist_g(p_i, p'_i)+\dist_g(p'_i,z_i)\le a+a+3\Psi_i\le 2a+1,
\end{align*}
 when $i$ is large. Now by the almost Pythagorean theorem, Proposition \ref{prop:AlmostPyth}, we have 
\begin{align*}
    \dist_g^2(x_i,z_i)&\le \dist_g^2(p'_i,x_i)-\dist_g^2(p'_i,z_i)+\Psi_i^2\le (\dist_g(p_i,x_i)+\dist_g(p'_i,p_i))^2-(a-3\Psi_i)^2+\Psi_i^2\\
                      &\le (a+2\Psi_i)^2-(a-3\Psi_i)^2+\Psi_i^2\le \Psi_i.
\end{align*}
This completes the proof that $\gamma(t_i+a)\to (a,x)$ since by Theorem \ref{thm:AlmostSplitting}, Remark \ref{rmk:LevelsetConverge} $x_i\to x$ along $(M,g,\gamma(t_i))\to (\R\times X, \dist, (0,x))$. 

%\eqref{item:FuncConv} Fix $R>0$, and let $\mathbf{b}_i^+$ to be the harmonic replacement of $b_i^+$ in $B_{4R}(p_i)$. Granted the uniform gradient estimate \ref{eq:SharpGradient}, the stability of harmonic function \cite{XuHeatLargetime}*{{Theorem 3.1}} implies that up to taking subsequence $\mathbf{b}_i^+$ uniformly converges to a harmonic function $\bar b$ on $B_{2R}((0,x))\subset\R\times X$. By Lemma \ref{lem:HarmApprox} $b_i^+$ uniformly converges to the same function $\bar b$ for any choice of $T_i\to \infty$. Since $T-t_i-\dist_g(\cdot, \gamma(T))$ converges locally uniformly to $b_\gamma-t_i$ as $T\to \infty$, we can choose $T_i\to\infty$ depending on $R$ and $p_i$, so that $|b_\gamma-t_i-b_i^+|\le \Psi_i$ on $B_{2R}(p_i)$. We see that a subsequence of $b_\gamma-t_i$ also uniformly converges to $\bar b$ along with $B_{2R}(p_i)\to B_{2R}((0,x))$. It suffices to prove that $\bar b=b$.    

%Now we consider $B_\gamma$ on $B_{2a+1}(p_i)$. It suffices to show that $B_\gamma-t_i\to b$ along $B_{2a+1}(p_i)\to B_{2a+1}((0,x))$ as $a>0$ is arbitrary. Notice that $(T-t_i)-\dist_g(p,T)\to b_\gamma(p)$ for any $p\in M$, so we can choose $T_i$ even larger so that $|b_\gamma-b_i^+|\le \Psi_i$ on $B_{2a+1}(p_i)$. Then we deduce that $\mathbf{b}_i^+$, $b_i^+$ and $b_\gamma$ converge uniformly to the same limit.

\end{proof}

\begin{proposition}\label{prop:LevelDiam}
       Let $(M^n,g)$ be an open $n$-manifold with $\Ric_g\ge 0$ and $\gamma:[0,\infty)\to M$ be a ray. If for any divergent sequence $t_i\to \infty$, up to subsequence $(M,g,\gamma(t_i))$ pGH converges to $(\R\times X, \dist, (0,x))$ for some compact $X$, then $\limsup_{t\to\infty} \mr{diam}_g(b_\gamma^{-1}(t))< \infty$.
\end{proposition}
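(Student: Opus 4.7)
I would proceed by contradiction. Suppose $\limsup_{t\to\infty}\mr{diam}_g(b_\gamma^{-1}(t))=\infty$; then there exist $t_j\to\infty$ and $p_j\in b_\gamma^{-1}(t_j)$ with $r_j\defeq\dist_g(p_j,\gamma(t_j))\to\infty$. By the hypothesis, after passing to a subsequence, $(M,g,\gamma(t_j))$ pGH converges to $(\R\times X,\dist,(0,x))$ for some compact $X$. The plan is to apply the almost splitting theorem at $\gamma(t_j)$ to force $p_j$, which lies on the level set $\{b_\gamma=t_j\}$, to be close to a point in the cross section $X$ of the limit, thereby bounding $r_j$ by $\mr{diam}(X)+o(1)$.

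I would set up the almost splitting at $\gamma(t_j)$ with $q_j^-=\gamma(0)$ and $q_j^+=\gamma(T_j)$ for $T_j\geq 2t_j$, so that the excess at $\gamma(t_j)$ vanishes and $L_j=t_j\to\infty$. Writing $b_j^+(x)=(T_j-t_j)-\dist_g(x,\gamma(T_j))$, let $\mathbf{b}_j^+$ be its harmonic replacement on $B_{4R_j}(\gamma(t_j))$. By Remark \ref{rmk:LevelsetConverge} one can choose $R_j\to\infty$ so that the approximation error $\Psi_j\to 0$ and $(\{\mathbf{b}_j^+=0\},\dist_g,x_j)$ pGH converges to $(X,\dist,x)$, where $x_j$ is a closest point in $\{\mathbf{b}_j^+=0\}$ to $\gamma(t_j)$.

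The main obstacle is ensuring that $R_j$ can be taken at least as large as $r_j$, so that $p_j\in B_{R_j}(\gamma(t_j))$ while still $\Psi_j\to 0$. For this I would first observe that the hypothesis forces $M$ to have linear volume growth: the cylindrical limits $\R\times X$ with compact $X$ give linear volume growth locally around $\gamma(t_j)$ by the volume convergence theorem \cite{DPG17}, and a covering of $B_r(\gamma(0))$ by finitely many balls $B_R(\gamma(kR))$ along $\gamma$ transfers this to global linear volume growth of $M$. Then Sormani's sublinear diameter result \cite{SormaniSublinear}*{Theorem 1} yields $r_j\le\mr{diam}_g(b_\gamma^{-1}(t_j))=o(t_j)=o(L_j)$, so the choice $R_j\ge r_j+1$ is compatible with $\Psi_j\to 0$.

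With $p_j\in B_{R_j}(\gamma(t_j))$, the argument concludes by a short computation. For $T_j$ chosen large enough, $b_j^+(p_j)=(T_j-t_j)-\dist_g(p_j,\gamma(T_j))\to b_\gamma(p_j)-t_j=0$, so Lemma \ref{lem:HarmApprox} gives $|\mathbf{b}_j^+(p_j)|\to 0$. By \eqref{eq:DistLevelClose}, a closest point $z_j\in\{\mathbf{b}_j^+=0\}$ to $p_j$ satisfies $\dist_g(p_j,z_j)\to 0$, and similarly $\dist_g(\gamma(t_j),x_j)\to 0$. The pGH convergence of the level sets to the compact space $(X,\dist,x)$ yields $\dist_g(z_j,x_j)\le\mr{diam}(X)+o(1)$, and the triangle inequality then produces $r_j\le\mr{diam}(X)+o(1)$, contradicting $r_j\to\infty$.
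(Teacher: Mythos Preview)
Your argument has a genuine gap in the step where you deduce linear volume growth of $M$. The covering you propose, $B_r(\gamma(0))\subset\bigcup_k B_R(\gamma(kR))$, is not valid in general: the right-hand side is only an $R$-neighborhood of the ray $\gamma$, and a priori $B_r(\gamma(0))$ may contain points at arbitrarily large distance from $\gamma$. Indeed, if the level sets $b_\gamma^{-1}(t)$ have unbounded diameter---precisely the scenario you are trying to rule out---then such points exist, so the reasoning is circular. Without linear volume growth you cannot invoke Sormani's sublinear diameter theorem, and hence you have no control ensuring $r_j=o(L_j)$; the almost splitting at scale $R_j\ge r_j+1$ is then unjustified. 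In the context of the paper this circularity is fatal: Proposition~\ref{prop:LevelDiam} is exactly what is used in Theorem~\ref{thm:equi} to pass from hypothesis \eqref{item:splitting1} to \eqref{item:uniclose2} and thence to linear volume growth \eqref{item:linearvol}, so you cannot assume the latter here. (A secondary issue: the volume convergence of \cite{DPG17} identifies the limit measure with $\haus^n$ only under a noncollapsing hypothesis, which is not part of the statement; but even granting uniform local volume bounds along $\gamma$, the covering step still fails.)

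The paper avoids this difficulty by working at a \emph{fixed} scale. It first shows, by a diagonal argument, that the diameters of all cross-sections $X$ arising as limits are uniformly bounded by some $D$, and then takes $R=10D+10$ independent of $t$. The almost splitting at $\gamma(t)$ then only controls $\mr{diam}_g\bigl(b_\gamma^{-1}(t)\cap B_{R/2}(\gamma(t))\bigr)$, showing it is at most $D+o(1)$. The remaining work is a connectivity argument: if some $y\in b_\gamma^{-1}(t)$ lay outside $B_{R/2}(\gamma(t))$, one follows a geodesic from $y$ to a far point $\gamma(\tau_1)$, applies the intermediate value theorem to $\dist_g(\cdot,\gamma)$ to find a point on this geodesic at distance exactly $2D+2$ from $\gamma$, and derives a contradiction from the local diameter bound already established (using Corollary~\ref{cor:LevelClose} to locate this point in an appropriate level set). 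This route needs no global volume hypothesis and no appeal to \cite{SormaniSublinear}.
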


\begin{proof}
Let $A=\{X:\text{ $\R\times X$ is a pGH limit of $(M,g,\gamma(t_i))$ for some $t_i\to \infty$}\}$. We observe that $\sup_{X\in A}\mr{diam}(X)<\infty$. Indeed, if not we have a contradicting sequence $X_j$ so that $\mr{diam}(X_j)\to \infty$ and each $X_j$ is a pGH limit of $(M,g,\gamma(t_{j,i}))$ for $t_{j,i}\to \infty$ as $i\to\infty$. Then we can find a diagonal sequence denoted by $t'_{i,i}\to\infty$ so that up to subsequence $(M,g, \gamma(t'_{i,i}))$ converges to $\R\times X$, but this $X$ can not be compact since it is an $\RCD(0,n-1)$ with infinite diameter, a contradiction. Set $D\defeq \sup_{X\in A}\mr{diam}(X)$.

Now we bound the diameters of level sets of $b_\gamma$. We first set up the almost splitting along the ray $\gamma$. For some fixed $t>0$, let $T>t$ to be chosen, let $p=\gamma(t)$, $q^-=\gamma(0)$, $q^+=\gamma(T)$, and $b_t^+=\dist_g(p, q^+)-\dist_g(\cdot,q^+)$. Take $R=10D+10$ and let $\mathbf{b}_t^+$ be the harmonic replacement of $b_t^+$ in $B_R(p)$. Take $y_{t,1},y_{t,2}\in b_\gamma^{-1}(t)\cap B_{R/2}(p)$. Since here we have $e(p)=0$, $\dist_g(p,q^+)=T-t$, we can use the error function $\Psi(t^{-1},(T-t)^{-1}|R, n)$ abbreviated as $\Psi$. For each $t>0$, there exists $T(t)$ depending on $t$ so that 
$$|b_t^+(y_{t,j})|=|b_t^+(y_{t,j})-(b_\gamma(y_{j})-t)|\le \Psi, \ j=1,2.$$ 
This is possible thanks to the local uniform convergence of $$(T-t)-\dist_g(y_{j},\gamma(T))\to b_\gamma(y_{j})-t$$ as $T\to\infty$ for each fixed $t>0$, $j=1,2$. %We will see later in the proof that $R_i$ has uniform upper bound $R>0$, so we can still use $\Psi_i=\Psi(T_i^{-1}|R,n)$ as the error function. 
From Remark \ref{rmk:LevelsetConverge} we see that $$\limsup_{t\to\infty}\mr{diam}_g((\mathbf{b}_t^+)^{-1}(0))\le D.$$ By Lemma \ref{lem:HarmApprox}, we have that
$$|b_t^+(y_{t,j})-\mathbf{b}_t^+(y_{t,j})|\le \Psi,\ j=1,2.$$ 
Altogether we have $|\mathbf{b}_t^+(y_{t,j})|\le 2\Psi$. Let $y_{t,j}'$ be a choice of closest point on $(\mathbf{b}_t^+)^{-1}(0)$ to $y_{t,j}$, $j=1,2$. we can estimate that
\begin{align}\label{eq:UniBoundLevel}
    \dist_g(y_{t,1},y_{t,2})&\le \dist_g(y_{t,1},y_{t,1}')+\dist_g(y_{t,1}',y_{t,2}')+\dist_g(y_{t,2},y_{t,2}')\\
    &\le |\mathbf{b}_t^+(y_{t,1})|+\mr{diam}_\dist((\mathbf{b}_t^+)^{-1}(0))+|\mathbf{b}_t^+(y_{t,2})|+2\Psi\notag\\
    &\le \mr{diam}_\dist((\mathbf{b}_t^+)^{-1}(0))+6\Psi.\notag
\end{align}
%When $i$ is large, we see that $R_i=8\mr{diam}_\dist(X)+8$ would suffice. When $i$ is bounded, each $b_\gamma^{-1}(t_i)$ has bounded diameter due to Lemma \ref{lem:Sormani}. Altogether, we can find $R\ge R_i$ for all $i\in \N$ as claimed. 
Then since $y_{t,1},y_{t,2}$ are arbitrary, we get from \eqref{eq:UniBoundLevel} that 
\[
\mr{diam}_g(b_\gamma^{-1}(t)\cap B_{R/2}(p))\le \mr{diam}_\dist((\mathbf{b}^+)^{-1}(0))+6\Psi(t^{-1}, (T(t)-t)^{-1}|R,n).
\]
Taking $t\to\infty$, hence $T(t)\to\infty$, we obtain that
\begin{equation}\label{eq:LevelDiam0}
    \limsup_{t\to\infty} \mr{diam}_g(b_\gamma^{-1}(t)\cap B_{R/2}(\gamma(t)))\le D.
\end{equation}
There exists $t_0>0$ such that for $t\ge t_0$, $\mr{diam}_g(b_\gamma^{-1}(t)\cap B_{R/2}(p))\le D+1$. We claim that $b_\gamma^{-1}(t)\cap B_{R/2}(\gamma(t))=b_\gamma^{-1}(t)$ holds for all $t\ge t_0+1$. If not, suppose there exists $t\ge t_0+1$ and $y\in b_\gamma^{-1}(t)\setminus B_{R/2}(\gamma(t))$. We take $\tau_1\ge t$ so that $\tau_1-\dist_g(y,\gamma(\tau_0))\ge b_\gamma(y)-1$. Let $\sigma(s):[0,1]\to M$ be a geodesic with $\sigma(0)=y$ and $\sigma(1)=\gamma(\tau_0)$. Then for all $s\in[0,1]$, 
\begin{equation}\label{eq:LevelDiam1}
    b_\gamma(\sigma(s))\ge \tau_1-\dist_g(\sigma(s),\gamma(\tau_1))\ge \tau_1-\dist_g(\sigma(0),\gamma(\tau_1))\ge b_\gamma(y)-1\ge t_0.
\end{equation}
Here we have used that $s\mapsto s-\dist_g(\cdot, s)$ is monotone increasing. To ease the notation we set $\tau_s=b_\gamma(\sigma(s))$ for $s\in[0,1]$. In particular $\tau_0=t$. By the assumption, it holds that 
\begin{equation}\label{eq:LevelDiam2}
    \mr{diam}_g(b_\gamma^{-1}(\tau_s)\cap B_{R/2}(\gamma(\tau_s)))\le D+1,\ \forall s\in[0,1].
\end{equation}
On the other hand we can estimate $\dist_g(\sigma(s), \gamma)$ for $s\in[0,1]$. Let $y'$ be a choice of closest point of $y$ on $\gamma$ then $5D+5=R/2\le \dist_g(y,\gamma(t))\le \dist_g(y,y')+\dist_g(y',\gamma(t))$. By definition $\dist_g(y,y')=\dist_g(y,\gamma)$ and by Corollary \ref{cor:LevelClose}, $\dist_g(y',\gamma(t))\le \dist(y,y')=\dist_g(y,\gamma)$, so $\dist(y,\gamma)\ge 2.5D+2.5$. Observe also that $\dist_g(\cdot, \gamma)$ is a continuous function and $\dist_g(\sigma(1),\gamma)=0$, by intermediate value theorem there is a $s_0\in (0,1)$ so that $\dist_g(\sigma(s_0),\gamma)=2D+2$. Then by Corollary \ref{cor:LevelClose} we have 
$$
\dist_g(\sigma(s_0), \gamma(\tau_{s_0}))\le 4D+4<R/2.
$$ 
It immediately follows from \eqref{eq:LevelDiam1} and \eqref{eq:LevelDiam2} that $\dist_g(\sigma(s_0),\gamma(\tau_{s_0})\le D+1$. However, this is a contradiction to $\dist_g(\sigma(s_0),\gamma)=2D+2$ we have derived. Thus we have shown that $b_\gamma^{-1}(t)\cap B_{R/2}(\gamma(t))=b_\gamma^{-1}(t)$ holds for all $t\ge t_0+1$. Combine this with \eqref{eq:LevelDiam0} we have completed the proof.

\end{proof}

\section{Structure of limit spaces at infinity}\label{sec:infinity}

In this section the final goal is to prove Theorem \ref{thm:equi}. We need to further derive some properties of Busemann functions under the volume noncollapsed assumption.

The first lemma is standard, see \cite{antonelli2023isoperimetric}*{Corollary 3.11} or \cite{SchoenYauBook}. We provide a proof here for completeness. In what follows, for any $R>r>0$ and $p\in M$, we denote the annulus $B_R(p)\setminus B_r(p)$ by $\mr{Ann}_{R,r}(p)$. 

\begin{lemma}\label{lem:AnnulusVol}
Let $(M,g)$ be an open $n$-manifold with $\Ric_g\ge 0$. Then for every $p\in M$ and $R>r>0$, it holds
\begin{equation}
    \volg(\mr{Ann}_{R,r}(p))\le n \frac{\volg(B_R(p))}{R}\frac Rr (R-r).
\end{equation}
Furthermore, if $(M,g)$ has linear volume growth \eqref{eq:linearvol}, and is noncollapsed \ref{eq:ncends} then for any sequence $\{p_i\}_{i\in \N^+}$ diverging to infinity, any pGH limit space $(X,\dist,\haus^n)$ of the sequence $(M,g,p_i)$ has linear volume growth with 
\begin{equation}\label{eq:volesti}
    \haus^n(B_\rho(x))\le 2nV\rho.
\end{equation}
In particular if $X$ splits isometrically as $(\R\times K, \haus^1\times \haus^{n-1})$, then $\haus^{n-1}(K)\le nV$ and $K$ is compact. 
\end{lemma}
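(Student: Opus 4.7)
First I address the annulus estimate. The Bishop--Gromov monotonicity of $s \mapsto \volg(B_s(p))/s^n$ combined with the coarea formula for the distance function gives $\haus^{n-1}(\partial B_s(p)) \le n\volg(B_s(p))/s \le n\volg(B_R(p))/s$ for a.e.\ $s \in (0, R]$ (the second inequality by monotonicity of $s \mapsto \volg(B_s(p))$). Integrating over $s \in (r, R)$ and bounding $\ln(R/r) \le (R-r)/r$ delivers the claim.

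For the bound \eqref{eq:volesti} in the limit, I fix a reference point $q \in M$ and $\rho > 0$. Whenever $\dist_g(q, p_i) > \rho$, the ball $B_\rho(p_i)$ lies inside the annulus $\mr{Ann}_{R_i, r_i}(q)$ with $R_i = \dist_g(q, p_i) + \rho$ and $r_i = \dist_g(q, p_i) - \rho$, so applying the annulus bound together with $r_i/R_i \to 1$ and $\limsup_{R \to \infty} \volg(B_R(q))/R = V$ yields $\limsup_i \volg(B_\rho(p_i)) \le 2nV\rho$. The volume convergence theorem of De Philippis--Gigli then transfers this upper bound to $\haus^n(B_\rho(x))$, since the noncollapsed condition passes to the $\RCD$ limit $X$.

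For the splitting case $X = \R \times K$, by stability of $\RCD$ under pGH convergence and Gigli's splitting theorem, $K$ is an $\ncRCD(0, n-1)$ space inheriting a uniform lower bound $v' > 0$ on the volumes of unit balls. Fubini applied to the product measure $\haus^1 \times \haus^{n-1}$ gives
\[
    \haus^n(B^X_\rho((0, k_0))) = \int_{-\rho}^{\rho} \haus^{n-1}(B^K_{\sqrt{\rho^2 - t^2}}(k_0))\,\di t.
\]
Restricting the integration to $|t| \le \rho/2$ and combining with \eqref{eq:volesti} gives $\haus^{n-1}(B^K_{\rho\sqrt{3}/2}(k_0)) \le 2nV$, so $\haus^{n-1}(K) \le 2nV < \infty$. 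The $v'$-bound on unit balls of $K$ then prevents $K$ from being unbounded (otherwise infinitely many disjoint unit balls would produce infinite measure), so $K$ is compact with some diameter $D$. Restricting the Fubini integral instead to $|t| \le \sqrt{\rho^2 - D^2}$ sharpens the estimate to $2\sqrt{\rho^2 - D^2}\,\haus^{n-1}(K) \le 2nV\rho$, and letting $\rho \to \infty$ yields $\haus^{n-1}(K) \le nV$. The nontrivial step is extracting compactness of $K$ from the noncollapsed hypothesis; the rest is standard Bishop--Gromov together with product-measure bookkeeping.
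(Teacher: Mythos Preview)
Your proof is correct. It differs from the paper's in two places worth noting.

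For the annulus estimate, the paper tests the distributional Laplacian comparison $\mathbf{\Delta}\dist_{g,p}^2 \le 2n\,\volg$ against a piecewise-linear cutoff $\varphi$ equal to $1$ on $B_r(p)$ and $0$ outside $B_R(p)$; one integration by parts then gives $2n\,\volg(B_R(p)) \ge \tfrac{2r}{R-r}\volg(\mr{Ann}_{R,r}(p))$ directly. Your route via Bishop--Gromov plus coarea is equally valid (and in fact yields the slightly sharper $n\volg(B_R(p))\ln(R/r)$ before you relax it); the paper's test-function argument has the mild advantage of avoiding any a.e.\ differentiability bookkeeping for $s\mapsto \volg(B_s(p))/s^n$.

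For the splitting consequence, the paper observes in one line that
\[
\haus^{n-1}(K)=\lim_{\rho\to\infty}\frac{\haus^n(B_\rho(x))}{2\rho},
\]
which follows from your Fubini identity by monotone convergence \emph{without} first knowing $K$ is compact (substitute $t=\rho u$ and let $\rho\to\infty$). The bound $\haus^{n-1}(K)\le nV$ then drops out immediately, and compactness follows from finiteness of the measure together with the noncollapsed lower bound. Your two-pass argument (first $\haus^{n-1}(K)\le 2nV$, then compactness, then sharpen to $nV$) reaches the same endpoint but is more roundabout; the monotone-convergence shortcut is worth internalizing.
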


    It is not always the case that the same volume growth order is carried to a limit space at infinity. For example the paraboloid $\{(x,y,z)\in \R^3: z=x^2+y^2\}$ has volume growth of order $\frac32$, while its limit space at infinity is $\R^2$ regardless of divergent sequence which has volume growth order $2$.

\begin{proof}
    Let $\varphi(x): M\to \R$ be the following bounded Lipschitz function with compact support.
    \begin{equation}
        \varphi(x)=\begin{cases}
1 &\dist_{g,p}(x)\le r,\\
\frac1{R-r}(R-\dist_{g,p}(x)) & r<\dist_{g,p}(x)<R,\\
0 &\dist_{g,p}(x)\ge R.
        \end{cases}
    \end{equation}
    The distributional Laplacian comparison $\mathbf{\Delta}\dist_p^2\le 2n\ \volg$ implies that 
    \begin{align}
        2n\volg(B_R(p))&\ge \int_{B_R(p)}\varphi \di \mathbf{\Delta}\dist_{g,p}^2=-\int_{B_R(p)} \nabla \varphi \cdot 2\dist_{g,p}\nabla \dist_{g,p}\di \volg\notag\\
        &=2\int_{\mr{Ann}_{R,r}(p)}\frac{\dist_{g,p}|\nabla \dist_{g,p}|^2}{R-r}\di \volg\ge \frac{2r}{R-r}\volg(B_R(p)\setminus B_r(p)).
    \end{align}
     So we can rearrange terms in the above inequality to derive that
    \begin{equation}\label{eq:annulus}
        \volg(\mr{Ann}_{R,r}(p))\le n \frac{\volg(B_R(p))}{R} \frac Rr (R-r)%\le 2nv\frac Rr(R-r).
    \end{equation}
    %In the last inequality, when $R$ is big, we use the linear volume growth assumption and we can take $C=nV$. Otherwise $R$ is bounded, the last inequality follows from Bishop-Gromov inequality.

    Now given $\{p_i\}_{i\in N^+}$ diverging to infinity. We may assume that $(M,p_i)$ converges to $(X,x)$ up to taking a subsequence. Fix a radius $\rho\in \R^+$ then for large enough $i$, $\rho\in (0,\dist(p, p_i))$. Applying \eqref{eq:annulus} to $R=\dist(p, p_i)+\rho$, $r=\dist(p,p_i)-\rho$ yields that %for large enough $i$
    \begin{equation}
        \volg(B_\rho(p_i))\le \volg(\mr{Ann}_{R,r}(p))\le n\frac{\volg(B_R(p))}{R}\frac{\dist(p, p_i)+\rho}{\dist(p,p_i)-\rho}\cdot 2\rho.
    \end{equation}
    Let $i\to \infty$, we get from the volume convergence \cites{Colding97,Cheeger-Colding97I, DPG17} and volume growth assumption \eqref{eq:linearvol} that 
    \begin{equation}
        \haus^n(B_\rho(x))\le 2nV\rho.
    \end{equation}
   This is the sought estimates. If $X=\R\times K$ isometrically, then we have that $\haus^{n-1}(K)=\lim_{\rho\to \infty}\frac{\haus^n(B_\rho(x))}{2\rho}\le nV$. Since $K$ is $\ncRCD(0,n-1)$ with finite $\haus^{n-1}$-volume, it is compact.
\end{proof}

We will turn to the study of Busemann functions with the noncollapsed condition \eqref{eq:ncends}. Some pathological behaviors are excluded as a result of this extra assumption. Note that the proposition we will prove below is not possible without \eqref{eq:ncends} as mentioned in Remark \ref{rmk:ncends}.    

\begin{proposition}\label{prop:bddDist}
    Let $(M,g)$ be an open $n$-manifold with $\Ric_g\ge 0$. Suppose $M$ is noncollapsed \eqref{eq:ncends} and has linear volume growth \eqref{eq:linearvol}, then either $M$ splits or for any ray $\gamma$, there exists a constant $D>0$ such that 
    \[
    \dist_g(p,\gamma)\le D, \ \forall p\in M.
    \]
    
    %If for every sequence $\{p_i\}$ diverging to infinity, the limit spaces of the sequence $(M,g,p_i)$ are all of the form $\R\times K$ for some compact $\ncRCD(0,n-1)$ space $K$, with uniform diameter bound. 
\end{proposition}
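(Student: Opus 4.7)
Assume $M$ does not split, since otherwise there is nothing to prove. The central reduction is the following. For any $p \in M$ set $t_p \defeq b_\gamma(p)$; since $b_\gamma(\gamma(t)) = t$ for every $t \geq 0$, both $p$ and $\gamma(t_p)$ lie in $b_\gamma^{-1}(t_p)$ whenever $t_p \geq 0$, so
\[
\dist_g(p, \gamma) \leq \dist_g(p, \gamma(t_p)) \leq \mr{diam}_g(b_\gamma^{-1}(t_p)).
\]
Thus it suffices to (i) bound $\mr{diam}_g(b_\gamma^{-1}(t))$ uniformly for large $t$, and (ii) handle points with small Busemann value separately via compactness of sublevel sets. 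Step (ii) is immediate from Lemma \ref{lem:bddBuse}, which, since $M$ does not split, guarantees that $\{b_\gamma \leq t_0\}$ is compact for any fixed $t_0$; the continuous function $\dist_g(\cdot, \gamma(0))$ is then bounded on this set by some constant $D_1$.

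For step (i) the strategy is to invoke Proposition \ref{prop:LevelDiam}. Its hypothesis requires that for every divergent sequence $t_i \nearrow \infty$, up to subsequence $(M, g, \gamma(t_i))$ pGH converges to a cylinder $\R \times X$ with $X$ compact. To verify this I would apply the almost splitting theorem along $\gamma$ in the sequential form of Remark \ref{rmk:LevelsetConverge}: taking $p_i = \gamma(t_i)$, $q_i^- = \gamma(0)$, $q_i^+ = \gamma(T_i)$ with $T_i \gg t_i$, the excess of $p_i$ vanishes identically, and both distances $\dist_g(p_i, q_i^\pm)$ tend to infinity. Hence up to subsequence $(M, g, \gamma(t_i)) \to (\R \times X, \dist, (0, x))$, and the noncollapsing hypothesis \eqref{eq:ncends} combined with the $\RCD$ splitting theorem and volume convergence upgrades $X$ to an $\ncRCD(0, n-1)$ space. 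By the final assertion of Lemma \ref{lem:AnnulusVol}, $\haus^{n-1}(X) \leq nV < \infty$, which forces $X$ to be compact.

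With the hypothesis of Proposition \ref{prop:LevelDiam} in hand, we obtain $D_0 \defeq \limsup_{t \to \infty} \mr{diam}_g(b_\gamma^{-1}(t)) < \infty$. Pick $t_0 > 0$ with $\mr{diam}_g(b_\gamma^{-1}(t)) \leq D_0 + 1$ for all $t \geq t_0$. Then any $p$ with $b_\gamma(p) \geq t_0$ satisfies $\dist_g(p, \gamma) \leq D_0 + 1$, and any $p$ with $b_\gamma(p) < t_0$ lies in the compact set $\{b_\gamma \leq t_0\}$, hence $\dist_g(p, \gamma) \leq \dist_g(p, \gamma(0)) \leq D_1$. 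Setting $D \defeq \max(D_0 + 1, D_1)$ finishes the proof.

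\textbf{Main obstacle.} The only substantive step is verifying the cylinder-with-compact-cross-section conclusion needed to invoke Proposition \ref{prop:LevelDiam}. This is the point where the noncollapsing assumption is genuinely used: without it, the almost splitting still produces a limit that splits an $\R$-factor, but the cross-section could be noncompact (as in Sormani's example recalled in Remark \ref{rmk:ncends}), and then the level-set diameters could grow, breaking the whole reduction. The volume bound \eqref{eq:volesti} from Lemma \ref{lem:AnnulusVol} together with the finite-volume compactness of $\ncRCD(0, n-1)$ spaces is exactly what rescues the argument under \eqref{eq:ncends}.
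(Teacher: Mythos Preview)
Your argument is correct, and it takes a genuinely different route from the paper.

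The paper argues by contradiction: assuming a sequence $p_i$ with $\dist_g(p_i,\gamma)\to\infty$, it lets $t_i$ be the foot of the nearest-point projection to $\gamma$ and splits into two cases. If $t_i\to\infty$, the almost splitting theorem and Proposition~\ref{prop:RayConverge} give a limit $\R\times K$ with $K$ compact (via Lemma~\ref{lem:AnnulusVol}), while the segments from $p_i$ to $\gamma(t_i)$ converge to a ray in $\{0\}\times K$, contradicting compactness. If $t_i$ stays bounded, those segments converge to a second ray $\sigma$ perpendicular to $\gamma$, and one shows $\gamma\subset\{b_\sigma\le 0\}$, contradicting Lemma~\ref{lem:bddBuse}.

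You instead go directly for a uniform diameter bound on Busemann level sets. You verify the hypothesis of Proposition~\ref{prop:LevelDiam} by combining the almost splitting along $\gamma$ with Lemma~\ref{lem:AnnulusVol} to force compact cross-sections, and then pair the resulting $\limsup$ diameter bound with the compactness of sublevel sets from Lemma~\ref{lem:bddBuse}. This is logically independent of the paper's geodesic-convergence argument and in particular avoids the case analysis and the use of Proposition~\ref{prop:RayConverge}. The trade-off is that your proof leans on the rather technical Proposition~\ref{prop:LevelDiam}, whereas the paper's argument here is more self-contained; on the other hand, your route makes the later Proposition~\ref{prop:uniBddDiam} essentially a corollary, since you have already established the key diameter bound en route. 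There is no circularity: Proposition~\ref{prop:LevelDiam} and Lemma~\ref{lem:AnnulusVol} are proved without reference to Proposition~\ref{prop:bddDist}.
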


\begin{proof}
    Assume the statement is not true, then there exists a sequence of points $\{p_i\}_{i\in\N^+}$ such that $\dist_g(p_i,\gamma)\to \infty$ as $i\to \infty$. Clearly $\{p_i\}_{i\in\N^+}$ diverges to infinity. There exists $t_i\in [0,\infty)$ such that $\dist_g(p_i,\gamma)=\dist_g(p_i,\gamma(t_i))$. Let $\sigma_i$ be a geodesic joining $p_i$ with $\gamma(t_i)$. There are now two cases.
    \begin{enumerate}
        \item\label{item:BddDist 1} If up to subsequence $t_i\to \infty$. The limit space $(X,\dist, x)\defeq\lim_{i\to \infty}(M,g,\gamma(t_i))$ splits off a line coming from $\gamma$ by the almost splitting theorem, Theorem \ref{thm:AlmostSplitting} and Proposition \ref{prop:RayConverge}. Write $X=\R\times K$, where $K$ is an $\ncRCD(0,n-1)$ space. $K$ is compact thanks to Lemma \ref{lem:AnnulusVol}. We set $x=(0,k_0)$. The segments $\sigma_i$ locally uniformly converge to a ray $\sigma$ emanating from $x$, and for any $s\in[0,\infty)$, $\dist(\sigma(s),x)=s$. 
        
        We claim that for any $s\in[0,\infty)$, $\dist(\sigma(s),\R\times \{k_0\})=s(=\dist(\sigma(s),x))$. Suppose not, there is a $s_0>0$ and a $\eps_0>0$ such that $\dist(\sigma(s_0),\R\times \{k_0\})<s_0-\eps_0$. Set $\sigma(s)=(\sigma_\R(s),\sigma_K(s))$, then $s_0-\eps_0>\dist(\sigma(s_0),\R\times \{k_0\})=\dist(\sigma(s_0),(\sigma_\R(s_0),k_0))$. The last equality comes from the product metric structure. By proposition \ref{prop:RayConverge}, $\gamma(t_i+\sigma_\R(s_0))\to (\sigma_\R(s_0),k_0))$. Meanwhile, by the choice of $t_i$, 
        $$
        s_0=\dist_g(\sigma_i(s_0),\gamma(t_i))\le \dist_g(\sigma_i(s_0),\gamma(t_i+\sigma_\R(s_0)))\le s_0-\frac{\eps_0}2,
        $$ 
        when $i$ is large, a contradiction. This completes the proof of the claim. 
        
        However, our claim implies $$\dist^2((\sigma_\R(s),\sigma_K(s)),(\sigma_\R(s),k_0))=\dist^2((\sigma_\R(s),\sigma_K(s)),(0,k_0)),$$ which simplifies to $\sigma_\R(s)=0$ for any $s\in[0,\infty)$. This implies that $\sigma\subset \{0\}\times K$, which is impossible since $K$ is compact.
        
        \item\label{item:BddDist 2} If up to subsequence $t_i\to t\in[0,\infty)$, then up to subsequence $\sigma_i$ converges to a ray $\sigma$ emanating from $\gamma(t)$. After a reparameterization, we can take $t=0$. The convergence of $\sigma_i$ yields that 
        \begin{equation}\label{eq:perp}
            \dist_g(\sigma(s),\gamma)=\dist_g(\sigma(s),\gamma(0))=s, \ \forall s\ge 0.
        \end{equation}
           We show in this case $M$ must split. If $M$ does not split, then $\{b_\sigma\le 0\}$ is compact due to Lemma \ref{lem:bddBuse}. However, for any $T\ge 0$, we see from \eqref{eq:perp} that
         \[
         s-\dist(\gamma(T),\sigma(s))=\dist(\gamma(0),\sigma(s))-\dist(\gamma(T),\sigma(s)) \le 0,\ \forall s\ge 0.
         \]
         Let $s\to \infty$, it follows that $\gamma\subset \{b_\sigma\le 0\}$ which contradicts the compactness of $\{b_\sigma\le 0\}$.
         %For any $R\in \R^+$, the open set $M\setminus \bar{B}_R(\gamma(0))$ is connected hence path connected. For $\sigma(R+1),\gamma(R+1)\in M\setminus \bar{B}_R(\gamma(0))$, there is a smooth path $\alpha_R\subset M\setminus \bar{B}_R(\gamma(t))$ connecting them. When $R$ is large, $(M,R^{-1}g,\gamma(t))$ is GH close to a line $(\R,|\cdot|,0)$, so $\frac74 R\le \dist(\sigma(R+1),\gamma(R+1))\le \frac{9}{4}R$. The function $s\mapsto\dist(\alpha_R(s),\sigma(R+1))$ is continuous in $s$ with minimum $0$ and a maximum at least $\frac74 R$, so there is a $t_R$ such that $\dist_g(\alpha_R(t_R),\sigma(R+1))=\frac R$, then we have 
         %\[
         %\dist_g(\alpha_R(t_R),\gamma(R+1))\ge %\dist(\sigma(R+1),\gamma(R+1))- %\dist_g(\alpha_R(t_R),\sigma(R+1))\ge \frac34 R.
         %\]
         %Similarly $\dist_g(\alpha_R(t_R),\gamma(R+1))\le \frac{11}4 R$. Meanwhile 
        % \[
        %  \dist_g(\gamma(t), \alpha_R(t_R))\le \dist_g(\gamma(t),\sigma(R+1))+\dist_g(\sigma(R+1), \alpha_R(t_R))\le \frac 32R+1 
        % \]
        %Let $R\to \infty$, along with the convergence $(M,R^{-1}g,\gamma(0))\to (\R,|\cdot|,0)$, we may assume that $\sigma(R+1)\to 1$, $\gamma(R+1)\to -1$. Now up to subsequence $\alpha_R(t_R)$ converge to a point $x\in R$ satisfying $1\le |x|\le\frac32$, $\frac54\le|x+1|\le \frac{11}{4}$, $|x-1|=\frac12$. 
    \end{enumerate}
    %Let $\sigma_i$ be the geodesic joining $\gamma(0)$ with $p_i$, then $\sigma_i$ converges to a ray $\sigma:[0,\infty)\to M$ emanating from $\gamma(0)$, i.e., $\sigma(0)=\gamma(0)$. Let $t_i\in \R_{\ge 0}$ so that $\dist_g(\sigma(t_i),\sigma(0))$, then $t_i\to\infty$ and $\dist_g(\sigma(t_i),p_i)\to 0$ as $i\to\infty$. There exists $s_i\in \R_{\ge 0}$ such that $\dist_$ let $\eta_i$
\end{proof}

It is expected that the constant $D$ is independent of the ray $\gamma$. 

Motivated by Proposition $\ref{prop:bddDist}$, we also consider that for a fixed ray $\gamma$ there exists $D>0$ such that for any $p\in M$, $\dist_g(p,\gamma)<D$ as an assumption. This assumption is more general, for example it is allowed that the end of $M$ collapses to $\R$. It serves as a weaker substitution of linear volume growth \eqref{eq:linearvol} and noncollapsed condition \eqref{eq:ncends}. We will refer to it as the bounded distance to a fixed ray condition. %The forthcoming lemma is a partial converse of $\ref{prop:bddDist}$.

\begin{lemma}\label{lem:NoSplitting}
     Let $(M^n,g)$ be an open $n$-manifold with $\Ric_g\ge0$, and $\gamma$ be a ray in $M$. If there exists $D>0$ such that for any $p\in M$, $\dist_g(p,\gamma)<D$, then $M$ does not split and the Busemann function $b_\gamma$ associated to $\gamma$ has finite minimum and its level sets have uniform diameter bound $4D$.
\end{lemma}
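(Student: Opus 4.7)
The plan is to handle the three assertions in sequence — first the $4D$ bound on the diameter of each level set of $b_\gamma$, then the finite lower bound for $b_\gamma$, and finally the non-splitting of $M$ — each by a short triangle-inequality argument pivoting on projections to $\gamma$. The common device is: for any $p\in M$ the hypothesis yields a nearest point $\gamma(s_p)$ on $\gamma$ with $\dist_g(p,\gamma(s_p))<D$, after which Corollary \ref{cor:LevelClose} applied with $R=D$ forces $|s_p-b_\gamma(p)|<D$.

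For the level-set diameter I would pick $p,q\in b_\gamma^{-1}(t)$ with projections $\gamma(s_p),\gamma(s_q)$; the previous observation gives $|s_p-t|,|s_q-t|<D$, whence $\dist_g(\gamma(s_p),\gamma(s_q))=|s_p-s_q|<2D$, and one more use of the triangle inequality through these projections delivers $\dist_g(p,q)<4D$. For the finite minimum I would note that $s_p\ge 0$ (since $\gamma$ is parametrized on $[0,\infty)$), and combine the $1$-Lipschitz bound on $b_\gamma$ with $b_\gamma(\gamma(s_p))=s_p$ to obtain
\[
b_\gamma(p)\ge s_p-\dist_g(p,\gamma(s_p))>-D,
\]
so $\inf_M b_\gamma\ge -D>-\infty$.

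The main obstacle I anticipate is the non-splitting assertion, which has to rule out every possible isometric splitting $M=\R\times N$ regardless of whether the complementary factor $N$ is compact. My plan is to argue directly from the bounded-distance hypothesis: if such a splitting existed, then a ray in a metric product decomposes into constant-speed geodesics in each factor, so $\gamma(t)=(a+c_1 t,\gamma_N(t))$ for some $a,c_1\in\R$ and a constant-speed geodesic $\gamma_N$ in $N$. Assuming first $c_1\ge 0$, the competitor points $p_s:=(s,\gamma_N(0))$ with $s<a$ satisfy
\[
\dist_g(p_s,\gamma(t))^2\ge (a+c_1 t-s)^2\ge (a-s)^2\quad\text{for every }t\ge 0,
\]
so $\dist_g(p_s,\gamma)\ge a-s$, which exceeds $D$ once $s<a-D$, contradicting the hypothesis. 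The case $c_1<0$ is symmetric by reflecting the $\R$-factor. The conceptual point being exploited is that $\gamma$ is only a ray rather than a line, so one of the two $\R$-directions in any hypothetical splitting of $M$ necessarily lies at unbounded distance from $\gamma$.
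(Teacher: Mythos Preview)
Your proof is correct. The arguments for the $4D$ diameter bound and for $\inf_M b_\gamma\ge -D$ are essentially identical to the paper's: both pivot on the nearest-point projection $\gamma(s_p)$ and invoke Corollary~\ref{cor:LevelClose} to obtain $|s_p-b_\gamma(p)|\le D$, then string together the triangle inequality.

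The non-splitting part, however, is handled differently. The paper argues by contradiction in a more roundabout way: assuming $M=\R\times K$, it shows via an estimate of type~\eqref{eq:PosNeg} that $b_\gamma((a,k_0))\to\infty$ as $a\to\pm\infty$, and then the intermediate value theorem produces level sets of $b_\gamma$ with arbitrarily large diameter, contradicting the $4D$ bound just established. Your route is more direct: you exploit that in a Riemannian product a ray has the form $\gamma(t)=(a+c_1 t,\gamma_N(t))$, and simply exhibit points $p_s=(s,\gamma_N(0))$ on one side of the $\R$-axis whose distance to every $\gamma(t)$ is bounded below by $|a-s|$, violating the hypothesis $\dist_g(\cdot,\gamma)<D$. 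This avoids invoking the diameter bound altogether and isolates the geometric reason cleanly --- a ray only occupies a half-line in the $\R$-factor, leaving the other half at unbounded distance. The paper's approach, on the other hand, ties the three assertions together and foreshadows how the Busemann level sets interact with any splitting direction, which is thematically aligned with the later uses of $b_\gamma$ in Section~\ref{sec:infinity}.
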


\begin{proof}
        If $(M,g)$ splits, write $(M,g)=(\R\times K,g)$. Also we can set $\gamma(0)=(0,k_0)$. Take $a,b\in[0,\infty)$, let $t_a\ge 0$ (resp. $t_b\ge 0$) be such that $\dist_g((a,k_0),\gamma)=\dist_g((a,k_0),\gamma(t_a))$ (resp. $\dist_g((-b,k_0),\gamma)=\dist_g((-b,k_0),\gamma(t_b))$) and $t_a'\ge0$ (resp $t_b'\ge0$) be such that $b_\gamma((a,k_0))=t_a'$ (resp. $b_\gamma((-b,k_0))=t_b'$). Using our assumption and applying Corollary \ref{cor:LevelClose}, we can estimate that 
        
    \begin{align}\label{eq:PosNeg}
        a+b&=\dist_g((a,k_0),(-b, k_0))\\
           &\le \dist_g((a,k_0),\gamma(t_a))+|t_a-t_a'|+|t_a'-t_b'|+\dist_g((-b,k_0),\gamma(t_b))+|t_b-t_b'|\notag\\
           &\le 4D+|t_a'-t_b'|.\notag
    \end{align}
    
     Now fix $b$ and let $a\to \infty$ in \eqref{eq:PosNeg}, we see that $b_\gamma((a,k_0))=t_a'\to \infty$. Likewise, fix $a$ and let $b\to\infty$ in \eqref{eq:PosNeg} we see that $b_\gamma((-b,k_0))=t_b'\to\infty$. Then since $t\mapsto b_\gamma((t,k_0))$ is continuous. We find by intermediate value theorem that there exists $t_i,s_i\to\infty$ such that 
     $$
     b_\gamma((t_i,k_0))=b_\gamma((-s_i,k_0))\to\infty.
     $$ In particular $\limsup_{t\to\infty}\mr{diam}_g(b_\gamma^{-1}(t))=\infty$. However, we will prove that the diameters of the level sets of $b_\gamma$ are uniformly bounded. This will be a contradiction. 
    
    To this end, we first show that $b_\gamma$ has a finite minimum. Given $p\in M$ let $t_p\ge0$ be such that $\dist(p,\gamma)=\dist(p,\gamma(t_p))$, then by Corollary \ref{cor:LevelClose}, $|b_\gamma(p)-t_p|\le D$, which in turn gives $b_\gamma(p)\ge -D+t_p\ge -D$, as desired. Let $m=\min_{x\in M}b_\gamma(x)$, and $p,q\in b_\gamma^{-1}(T)$ for $T\in[m,\infty)$. Let $t_p$ (resp. $t_q$) be such that $\dist(p,\gamma)=\dist(p,\gamma(t_p))$ (resp. $\dist(q,\gamma)=\dist(q,\gamma(t_q))$), Corollary \ref{cor:LevelClose} implies $|t_p-t_q|\le |t_p-b_\gamma(p)|+|t_q-b_\gamma(q)|\le 2D$. It follows from our assumption that %then $|t_p-t_q|\le 2$. We invoke Proposition \ref{prop:bddDist} to derive that
    \[
\dist_g(p,q)\le \dist(p,\gamma(t_p))+\dist(q,\gamma(t_q))+|t_p-t_q|\le 4D. %2D+2.
    \]
    This estimate is independent of $p,q,T$, we take take supremum over $p,q\in b_\gamma^{-1}(T)$ to get $\mr{diam}_g(b_\gamma^{-1}(T))\le 4D$ and then take supremum over $T\ge m$. This completes the proof.
\end{proof}

%We will see some consequences of Proposition \ref{prop:bddDist} and Lemma \ref{lem:NoSplitting}. 

\begin{lemma}\label{lem:limitBdd}
    Let $(M^n,g)$ be an open $n$-manifold with $\Ric_g\ge0$, and $\gamma$ be a ray in $M$. If there exists $D>0$ such that for any $p\in M$, $\dist_g(p,\gamma)<D$, then for any $t_i\to \infty$ such that $(M,g,\gamma(t_i))\to (\R\times K, \dist, (0,k_0))$, $K$ is compact. 
\end{lemma}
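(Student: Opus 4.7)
The plan is to show that $K$ has finite diameter (in fact $\mr{diam}(K)\leq 4D$), from which compactness will follow since $K$ is an $\RCD(0,n-1)$ space and hence proper. I would fix an arbitrary $k\in K$, set $L\defeq \dist_K(k,k_0)<\infty$, and aim to prove $L\leq 2D$.

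First I would set up the almost splitting at $p_i\defeq \gamma(t_i)$ by taking $q_i^-=\gamma(0)$ and $q_i^+=\gamma(T_i)$ for some $T_i\to\infty$ with $T_i-t_i\to\infty$. The excess of $p_i$ with respect to $q_i^-,q_i^+$ is then zero and $\min(t_i,T_i-t_i)\to\infty$. Put $b_i^+(x)=(T_i-t_i)-\dist_g(x,\gamma(T_i))$ and let $\mathbf{b}_i^+$ denote its harmonic replacement on a ball $B_{R_i}(p_i)$ chosen as in Remark \ref{rmk:LevelsetConverge}, so that the error function $\Psi_i$ appearing in Lemma \ref{lem:HarmApprox} and \eqref{eq:DistLevelClose} tends to $0$ and the marked sequence $(\{\mathbf{b}_i^+=0\},\dist_g,x_i)$ pGH converges to $(K,\dist,k_0)$, where $x_i$ is a closest point of $p_i$ on $\{\mathbf{b}_i^+=0\}$.

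Second, using pGH convergence of these level sets I would pick $z_i\in\{\mathbf{b}_i^+=0\}$ with $\dist_g(z_i,x_i)\to L$. Since $b_i^+(p_i)=0$, Lemma \ref{lem:HarmApprox} and \eqref{eq:DistLevelClose} give $\dist_g(x_i,p_i)\leq 2\Psi_i$, so $\dist_g(z_i,p_i)\to L$; in particular, for $i$ large $z_i$ lies in $B_{R_i/2}(p_i)$, the region where the almost-splitting estimates apply. Third comes the key estimate: I would claim
\[
\dist_g(z_i,p_i)\leq 2D+\Psi_i.
\]
Indeed, $\mathbf{b}_i^+(z_i)=0$ and Lemma \ref{lem:HarmApprox} give $|(T_i-t_i)-\dist_g(z_i,\gamma(T_i))|\leq \Psi_i$. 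By the hypothesis $\dist_g(z_i,\gamma)<D$, one can pick $u_i\geq 0$ with $\dist_g(z_i,\gamma(u_i))<D$; provided $T_i\geq u_i$ (which holds for $i$ large, since $u_i$ stays bounded by a triangle-inequality argument from $\dist_g(z_i,p_i)\to L$), two triangle inequalities yield $|\dist_g(z_i,\gamma(T_i))-(T_i-u_i)|<D$. Combining, $|u_i-t_i|<D+\Psi_i$, and hence
\[
\dist_g(z_i,p_i)\leq \dist_g(z_i,\gamma(u_i))+\dist_g(\gamma(u_i),\gamma(t_i))<2D+\Psi_i.
\]
Letting $i\to\infty$ gives $L\leq 2D$; since $k$ was arbitrary, $K\subset \oli B_{2D}(k_0)$, which is compact.

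The hard part will be Step 3, namely tying the harmonic-replacement level set $\{\mathbf{b}_i^+=0\}$ back to the actual geometry of $\gamma$. The ``closest point on $\gamma$'' device circumvents any need for uniform convergence of $T-\dist_g(\cdot,\gamma(T))$ to $b_\gamma(\cdot)$ on growing balls and instead reduces everything to pointwise triangle inequalities that invoke only the standing hypothesis $\dist_g(\cdot,\gamma)<D$.
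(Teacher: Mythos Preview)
Your argument is correct and yields compactness of $K$; the only slip is verbal: you write ``$u_i$ stays bounded,'' but what you actually need and implicitly prove is that $u_i-t_i$ stays bounded (by roughly $D+L+1$), which indeed forces $T_i\ge u_i$ eventually since $T_i-t_i\to\infty$.

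The paper takes a different and somewhat cleaner route. Instead of invoking the harmonic replacement level-set convergence (Remark~\ref{rmk:LevelsetConverge}) and Lemma~\ref{lem:HarmApprox}, it works directly with the pGH convergence $(M,g,\gamma(t_i))\to(\R\times K,(0,k_0))$: given $k\in K$, it lifts $(0,k)$ to $p_i\in M$, picks the nearest ray point $\gamma(s_i)$ (so $\dist_g(p_i,\gamma(s_i))<D$), shows $s_i-t_i\to a$ along a subsequence, and then uses Proposition~\ref{prop:RayConverge} to conclude $\gamma(s_i)\to(a,k_0)$. Passing the inequality $\dist_g(p_i,\gamma(s_i))<D$ to the limit gives $a^2+\dist_K^2(k,k_0)\le D^2$, hence the sharper bound $\dist_K(k,k_0)\le D$. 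Your approach trades Proposition~\ref{prop:RayConverge} for the level-set machinery and a chain of triangle inequalities; it is a bit more hands-on and loses a factor of~$2$ in the diameter bound, but has the merit of being self-contained once the almost-splitting package is in place.
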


\begin{proof}
     It suffices to show $\dist(k_0,k)\le D$ for every $k\in K$. Here the distance $\dist$ on $\R\times K$ restricts to $\{0\}\times K$, which is isometric to $K$. Fix $k\in K$ then there is a diverging sequence $p_i\in M$ such that $(M,g,p_i)\to (\R\times K,\dist, (0,k))$. Let $s_i$ be such that $\dist_g(p_i,\gamma)=\dist_g(p_i,\gamma(s_i))<D$. %It follows from Lemma \ref{lem:NoSplitting} that %$$b_\gamma(p_i)=\dist(\gamma(b_\gamma(p_i)),\gamma(0))\ge \dist_g(p_i,\gamma(0))-\mr{diam}_g(b_\gamma^{-1}(b_\gamma(p_i)))\ge\dist_g(p_i,\gamma(0))-4D \to\infty.$$  
     We have
     \[
     |t_i-s_i|=\dist_g(\gamma(t_i),\gamma(s_i))\le \dist_g(\gamma(t_i),p_i)+\dist_g(p_i,\gamma(s_i))\le \dist_g(\gamma(t_i),p_i)+D.
     \]
     Note that $\dist_g(p_i,\gamma(t_i))\to \dist(k,k_0)$ so $\dist_g(p_i, \gamma(t_i))$ are uniformly bounded, in particular, $s_i\to \infty$ as $t_i\to\infty$. We also infer that there exists a subsequence which we do not relabel so that $\lim_{i\to\infty} s_i-t_i\defeq a$ exists and $a\in[-\dist(k,k_0)-D,\dist(k,k_0)+D]$. By Proposition \ref{prop:RayConverge}, $(M,g,\gamma(t_i+a))\to (\R\times K, \dist, (a,k_0))$. Also notice that up to subsequence $\dist_g(\gamma(s_i),\gamma(t_i+a))\to 0$, thus we have $(M,g,\gamma(s_i))\to (\R\times K, \dist, (a,k_0))$. Letting $i\to\infty$ in $\dist_g(p_i,\gamma(s_i))<D$, we see that $\dist((0,k),\dist(a,k_0))\le D$. This reads $a^2+\dist^2(k,k_0)\le D^2$, yielding the desired bound $\dist(k,k_0)\le D$. %Let $\sigma_i$ be a geodesic joining $p_i$ and $\gamma(s_i)$. Up to subsequence $\sigma_i$ locally uniformly converges to a geodesic emanating from $(0,k)$. It can be deduced from the argument in \eqref{item:BddDist 1} of the proof of Proposition \ref{prop:bddDist} that $(M,g,\gamma(s_i))\to (\R\times K, \dist, (0,k_0))$, i.e. the closest points $\gamma(s_i)$ on $\gamma$ to $p_i$ converge to the closest point $(0,k_0)$ on $\R\times \{k_0\}$ to $(0,k)$. This would imply that $D\ge\dist_g(p_i,\gamma)=\dist(p_i,\gamma(s_i))\to D+1$, a contradiction. %Consider the geodesics in $\R\times K$ joining $(0,k)$ and $(0,k_0)$, there is one geodesic $\sigma$ that is a limit geodesic, meaning there is a sequence of geodesics $\sigma_i\to\sigma$ pointwise. We can assume $\sigma_i:[0,\ell_i]$ are of unit speed and $\ell_i\to \ell\ge D+1$ and $\sigma_i(0)\to (0,k_0)$, $\sigma(\ell_i)\to (0,k)$. Then $\dist_g(\sigma_i(0),\gamma(t'_i))\ge \ell_i-\dist_g(p_i,\sigma_i(\ell_i))-\dist_g(\gamma(t'_i),\sigma_i(\ell_i))>D$ when $i$ is large, a contradiction. 
\end{proof}

The next proposition provides a new perspective to understand Sormani's sublinear diameter growth of the level sets of a Busemann function \cite{SormaniSublinear}. It confirms the intuition in Remark \ref{rmk:ncends}. %The unbounded diameter is indeed due to collapsing ends, i.e.\ missing \eqref{eq:ncends}, which confirms the intuition in Remark \ref{rmk:ncends}.  
\begin{proposition}\label{prop:uniBddDiam}
     Let $(M,g)$ be an open $n$-manifold with $\Ric_g\ge 0$ and $\gamma$ be a ray in $M$ and $b_\gamma$ be the associated Busemann function. If $(M,g)$ is noncollapsed \eqref{eq:ncends} and has linear volume growth \eqref{eq:linearvol}, then $\mr{diam}(b_\gamma^{-1}(t))$ is uniformly bounded for any $t\in \R$.
\end{proposition}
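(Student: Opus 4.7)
The plan is to dichotomize according to whether $M$ itself splits off a line, applying the tools developed earlier in this section.

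If $M$ does not split, Proposition \ref{prop:bddDist} immediately yields a constant $D>0$ with $\dist_g(p,\gamma)\le D$ for every $p\in M$, and Lemma \ref{lem:NoSplitting} then gives both that $b_\gamma$ has a finite minimum and that every non-empty level set of $b_\gamma$ has diameter at most $4D$. This settles the non-splitting case at once.

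If $M$ does split, the approach is to first identify the structure $M\cong \R\times N$ with $N$ a compact $(n-1)$-manifold of nonnegative Ricci curvature, and then compute $b_\gamma$ directly. Iterating the Cheeger--Gromoll splitting theorem writes $M=\R^k\times N$ with $N$ containing no line. Because $B_{r/\sqrt{2}}^{\R^k}(0)\times B_{r/\sqrt{2}}^N(n_0)\subset B_r^M((0,n_0))$ and the noncollapsedness of $M$ transfers to a uniform positive lower bound on $\mr{vol}_N(B_1^N(n))$ via the product structure, the linear growth of $\mr{vol}_g(B_r)$ forces $k=1$; moreover, a Fubini-type expression for volumes in $\R\times N$ combined with Bishop--Gromov on $N$ yields $\mr{vol}_N(N)<\infty$, and then a diverging sequence in $N$ would produce infinitely many pairwise disjoint unit balls of uniformly positive volume, contradicting the finite total. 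Hence $N$ is compact.

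With $M=\R\times N$ and $N$ compact, any ray $\gamma$ must have constant $N$-component: otherwise, by constancy of speed in each factor, the $N$-distance traversed would grow linearly in the parameter and eventually exceed $\mr{diam}(N)$, breaking the minimizing property. Thus $\gamma(t)=(a_0\pm t, n_0)$, and an immediate computation of the Busemann limit gives $b_\gamma(p)=\pm(a_p-a_0)$, so each $b_\gamma^{-1}(t)$ is isometric to the fiber $N$ and has diameter $\mr{diam}(N)<\infty$. The main obstacle is the structural identification $M\cong \R\times N_{\mathrm{compact}}$ in the splitting case; the non-splitting case and the final diameter estimates are then essentially immediate from what has already been established.
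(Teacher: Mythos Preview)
Your proof is correct. The non-splitting case is handled identically to the paper, via Proposition~\ref{prop:bddDist} and Lemma~\ref{lem:NoSplitting}.

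In the splitting case your route differs from the paper's official proof. The paper argues that when $M=\R\times K$, translations along $\R$ are isometries, so the (unique) limit space at infinity is $\R\times K$ itself; it then invokes Proposition~\ref{prop:LevelDiam} to get $\limsup_{t\to\infty}\mr{diam}(b_\gamma^{-1}(t))<\infty$, and propagates this to all $t$ using Lemma~\ref{lem:Sormani}. Your approach is more elementary: you justify directly (from linear volume growth and noncollapsedness) that the non-$\R$ factor is compact, observe that any ray must project to a constant in the compact factor, and compute $b_\gamma$ explicitly so that each level set is a fibre. This avoids the almost-splitting machinery entirely in the split case and also supplies the compactness of the cross section, which the paper's proof asserts without argument. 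In fact, your argument in the splitting case is essentially the content of the Remark the paper places immediately \emph{after} Proposition~\ref{prop:uniBddDiam}, so the paper acknowledges this shortcut but chose not to use it in the formal proof.
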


\begin{proof}
    %Again we infer from the proof of \ref{lem:limitBdd} that $M$ does not split. 
    If $M$ splits as $\R\times K$, then $K$ is a compact $(n-1)$-manifold of nonnegative Ricci curvature. Notice that the translations in the $\R$-factor are isometries, so we can bring any point into $\{0\}\times K$ via an isometry, so the limit space at infinity is unique and it is $M=\R\times K$. The uniform bound on $\mr{diam}(b_\gamma^{-1}(t))$ now follows from Proposition \ref{prop:LevelDiam} and Lemma \ref{lem:Sormani}. 
    
    If $M$ does not split, then by proposition \ref{prop:bddDist} there is $D>0$ such that for any $p\in M$, $\dist_g(p,\gamma)<D$. The desired result follows from the proof of Lemma \ref{lem:NoSplitting}.

\end{proof}

\begin{remark}
    In fact, in the previous Proposition, if $M$ splits as $\R\times K$ for $K$ compact, then any ray lies entirely in $\R\times\{k\}$ for some $k\in K$. This is because the projection of any ray onto $K$ is a minimizing geodesic parametrized on $[0,\infty)$, hence is either a ray or a point, and it cannot be a ray since $K$ is compact. In this case all the Busemann function level sets are isometric to $K$ hence trivial have uniformly bounded diameter. 
\end{remark}

Now we can provide a proof of the existence of splitting for any sequence of points diverging to infinity in the setting of Theorem \ref{thm:splitting}. We will postpone the proof of equal volume of the cross sections to Proposition \ref{prop:conv}.

\begin{proof}[First part of proof of Theorem \ref{thm:splitting}]
    Suppose that $M$ does not split, otherwise let $M=\R\times K$ for some compact $K$. As already observed, by translations along the $\R$-factor, we see that the unique limit space at infinity of $M$ is $\R\times K$ regardless of divergent sequences, and $$\haus^{n-1}(K)=\lim_{r\to\infty}\frac{\volg(B_r(p))}{2r}= \frac V2.$$  
    
    Take a ray $\gamma$, and the associated Busemann function $b_\gamma$. Let $\{p_i\}$ be an arbitrary sequence, Proposition \ref{prop:bddDist} asserts that there exists $D>0$ such that $\dist_g(p_i,\gamma)<D$. Let $t_i= b_\gamma(p_i)$. It is readily checked that 
    $$
        t_i=\dist_g(\gamma(0),\gamma(t_i))\ge \dist_g(\gamma(0),p_i)-\dist_g(\gamma(t_i),p_i)\ge \dist_g(\gamma(0),p_i)- \mr{diam}(b_\gamma^{-1}(t_i))\to \infty,
    $$ 
    given the uniform boundedness of the level sets of $b_\gamma$ due to Proposition \ref{prop:uniBddDiam}. Meanwhile, %the excess estimates made in Lemma \ref{lem:uniclose} works for $p_i$.
    For large $i$ so that $\dist_g(p_i,\gamma(0))>D$, choose large $T_i$ so that $\dist_g(p_i,\gamma(T_i))>D$ and that $T_i\to\infty$. The Abresch-Gromoll excess estimate \cite{AG}*{Proposition 2.3} asserts that the excess of $p_i$ with respect to $\gamma|_{[0,T_i]}$ is controlled as follows 
    \[
      e(p_i)\le C(n)D^{\frac n{n-1}}\left( \frac1{\dist_g(p_i,\gamma(0))-D}+\frac1{\dist_g(p_i,\gamma(T_i))-D} \right)^{\frac 1{n-1}}.
    \]
    %Since $p_i$ and $\gamma(t_i)$ are on the same level set of $b_\gamma$ 
    In particular, $e(p_i)\to 0$ as $i\to\infty$, then by Remark \ref{rmk:LevelsetConverge} up to subsequence the limit space $(X,\dist,x)$ of $(M,g, p_i)$ splits off a line. We write $X=\R\times K$. The compactness of $K$ and $\haus^{n-1}(K)$ upper bound follows from Lemma \ref{lem:AnnulusVol}.
\end{proof}

  %This fulfills \eqref{item:linearvol} $\Rightarrow$ \eqref{item:splitting2} of Theorem \ref{thm:equi}.

It then comes to the core of this note. Lemma \ref{lem:uniclose} below allows us to study the isoperimetric problem with the exact same strategy as in \cite{antonelli2023isoperimetric}. It is the most difficult step to generalize the results in \cite{antonelli2023isoperimetric} from $\sec\ge0$ to $\Ric\ge 0$. 

\begin{lemma}\label{lem:uniclose}
     Let $(M,g)$ be an open $n$-manifold with $\Ric_g\ge 0$. Let $\gamma$ be a ray in $M$ and $b_\gamma$ be the associated Busemann function. Suppose there exists $D>0$ such that $d\defeq\dist_g(p,\gamma)<D$ for every $p\in M$, %Suppose $M$ has noncollapsed ends \eqref{eq:ncends} and linear volume growth \eqref{eq:linearvol} 
     and $M$ does not split, then for any sufficiently small $\eps>0$, there exists $R>0$ such that for $p\in M$ satisfying $\dist_g(p,\gamma(0))>R$ it holds $\dist_g(\gamma(b_\gamma(p)),\gamma(t_p))=|b_\gamma(p)-t_p|<\eps$, where $t_p$ is such that $\dist_g(p,\gamma)=\dist_g(p,\gamma(t_p))$.
\end{lemma}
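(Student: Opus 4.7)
The strategy is to argue by contradiction via the almost splitting theorem. Suppose the conclusion fails, so there exist $\eps>0$ and a sequence $\{p_i\}\subset M$ with $\dist_g(p_i,\gamma(0))\to\infty$ but $|b_\gamma(p_i)-t_{p_i}|\ge\eps$. Set $t_i:=t_{p_i}$ and $t_i':=b_\gamma(p_i)$. Since $\dist_g(p_i,\gamma(t_i))\le D$ by hypothesis, the triangle inequality forces $t_i\ge\dist_g(p_i,\gamma(0))-D\to\infty$, while Corollary \ref{cor:LevelClose} gives $|t_i-t_i'|\le D$ throughout. Up to subsequence, $(M,g,\gamma(t_i))$ pGH-converges to some limit space. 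The almost splitting theorem applied with base point $\gamma(t_i)$ and tip points $\gamma(0)$, $\gamma(T_i)$ (for any $T_i\to\infty$ chosen sufficiently fast, which yields excess $0$) shows that this limit splits as $(\R\times K,\dist,(0,k_0))$, and Lemma \ref{lem:limitBdd} further ensures that $K$ is compact. Since $\dist_g(p_i,\gamma(t_i))\le D$, after an additional subsequence $p_i\to p_\infty=(s_\infty,k_\infty)\in\R\times K$.

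The technical heart of the argument is to identify $s_\infty=\lim_i(t_i'-t_i)$. Following Theorem \ref{thm:AlmostSplitting} and Remark \ref{rmk:LevelsetConverge}, the $\R$-coordinate of $p_\infty$ equals $\lim_i \mathbf{b}_i^+(p_i)$, where $\mathbf{b}_i^+$ is the harmonic replacement on $B_{R_i}(\gamma(t_i))$ of $b_i^+(x):=(T_i-t_i)-\dist_g(x,\gamma(T_i))$, with $R_i\to\infty$ chosen so that the almost splitting error tends to $0$. By Lemma \ref{lem:HarmApprox}, $|\mathbf{b}_i^+(p_i)-b_i^+(p_i)|\to 0$. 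Moreover, for each fixed $i$ the quantity $b_i^+(p_i)=(T_i-\dist_g(p_i,\gamma(T_i)))-t_i$ increases monotonically to $t_i'-t_i$ as $T_i\to\infty$ (by the very definition of the Busemann function), so a diagonal choice of $T_i\gg R_i+t_i$ ensures $b_i^+(p_i)\to t_i'-t_i$. Combining these two facts gives $s_\infty=\lim_i(t_i'-t_i)$, and the contradictory assumption $|t_i'-t_i|\ge\eps$ yields $|s_\infty|\ge\eps$.

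On the other hand, for every $a\in\R$ and every $i$ large enough that $t_i+a\ge 0$, the minimality in the definition of $t_i$ gives $\dist_g(p_i,\gamma(t_i+a))\ge\dist_g(p_i,\gamma(t_i))$. By Proposition \ref{prop:RayConverge}, $\gamma(t_i+a)\to(a,k_0)$ along the pGH convergence, so passing to the limit and exploiting the product structure of the metric on $\R\times K$ we obtain $(s_\infty-a)^2+\dist_K^2(k_\infty,k_0)\ge s_\infty^2+\dist_K^2(k_\infty,k_0)$ for every $a\in\R$, which forces $s_\infty=0$. This contradicts $|s_\infty|\ge\eps$. The principal obstacle is the identification of $s_\infty$: tracing through Theorem \ref{thm:AlmostSplitting} to compute $\lim_i \mathbf{b}_i^+(p_i)$ requires carefully coupling the scale $R_i$ (so that the almost splitting is accurate on a neighborhood containing all the $p_i$) with the tail parameter $T_i$ (so that $b_i^+(p_i)$ genuinely approaches $t_i'-t_i$), and then transferring this information from $b_i^+$ to its harmonic replacement $\mathbf{b}_i^+$ via Lemma \ref{lem:HarmApprox}.
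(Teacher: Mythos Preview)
Your argument is correct and follows the same contradiction-via-almost-splitting route as the paper. The only difference is the choice of base point: you center the pGH convergence at $\gamma(t_i)$ and compute the limit $\R$-coordinate of $p_i$ as $s_\infty=\lim_i(t_i'-t_i)$, whereas the paper centers at $\gamma(t_i')$ so that $p_i\to(0,k)$ automatically (because $b_\gamma(p_i)=t_i'$ makes $b_i^+(p_i)\to 0$) and instead tracks $\gamma(t_i)\to(a,k_0)$ via Proposition~\ref{prop:RayConverge}; both versions reach the identical contradiction from the minimality of $t_i$. One minor simplification available to you: since every point in play lies within distance $2D$ of the base point, a fixed radius (the paper takes $8D+1$) suffices for the harmonic replacement ball, so the diagonal coupling of $R_i\to\infty$ with $T_i$ is unnecessary.
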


\begin{proof}
    We set $t_p'\defeq b_\gamma(p)$. %It follows from the assumptions and Proposition \ref{prop:bddDist} that there exists $D>0$ such that $d\defeq\dist_g(p,\gamma)<D$ for every $p\in M$. It is not restrictive to assume $ n\ge 3$ as when $n=2$ the Ricci curvature is sectional curvature and everything is clear. 
      We first observe from Corollary \ref{cor:LevelClose} that 
      \begin{equation}\label{eq:ttprimeClose}
          |t_p-t_p'|\le D. 
      \end{equation}
      %Indeed, if $t_p'\ge t_p$, then since $t_p'=b_\gamma(p)\le \dist_g (p,\gamma(0))$ and $t_p=\dist_g(\gamma(0),\gamma(t_p))\ge \dist_g(\gamma(0),p)-\dist_g(p,\gamma(t_p))\ge\dist_g(\gamma(0),p)-D$, we see immediately $t_p'-t_p\le D$. On the other hand if $t_p\ge t_p'$, then by Lemma \ref{lem:Sormani} we have $t_p-t_p'=\dist_g(p,b_\gamma^{-1}(\gamma(t_p)))\le \dist_g(p,\gamma(t_p))=D$, as claimed. It follows $\dist_g(\gamma(t_p'),p)\le 2D$.
       Then we prove by contradiction. Assume there is a $\eps_0\in (0,D)$ so that there is a divergent sequence $p_i\to\infty$ with $|t_i-t_i'|\ge \eps_0>0$. Here, we have simplified notations in the way that $t_i\defeq t_{p_i}$ and $t_i'=t_{p_i}'$. We also have $|t_i-t_i'|\le D$ from \eqref{eq:ttprimeClose}. So up to taking subsequence, we can assume that $(M,g,\gamma(t'_i))\to (\R\times K, \dist, (0,k_0))$ and $\gamma(t_i)\to (a,k_0)$ with $|a|\in [\eps_0,D]$, granted Proposition \ref{prop:RayConverge}. 
      
      Let $q_i^-=\gamma(0)$, and $q_i^+=\gamma(T_i)$ for some $T_i\to \infty$ to be chosen, we define $$b_i^+(p)=(T_i-t_i)-\dist_g(p, q_i^+),$$ and $\mathbf{b}_i^+$ be the harmonic replacement of $b_i^+$ in $B_{8D+1}(p_i)$. We aim to prove that $p_i\to (0,k)$ for some $k
      \in K$. By the almost splitting theorem, Theorem \ref{thm:AlmostSplitting}, we only need to show $\mathbf{b}_i^+(p_i)\to 0$. Here $\mathbf{b}_i^+(p_i)$ makes sense because $p_i\in B_{2D+1}(\gamma(t_i'))\subset B_{8D+1}(\gamma(t_i'))$. We see that by definition, for any $\eps_i\to 0^+$ there exists $T_i\to\infty$ so that $|b^+_i(p_i)|=|b^+_i(p_i)-(b_\gamma(p_i)-t_i')|<\eps_i$. It follows from Lemma \ref{lem:HarmApprox} that $|\mathbf{b}_i^+(p_i)|\le \Psi_i+\eps_i$, so $\mathbf{b}_i^+(p_i)\to 0$ follows.
      
      Now by the definition of $t_i$ we have $\dist_g(p_i,\gamma(t_i))\le \dist_g(p_i,\gamma(t'_i))$. Let $i\to \infty$, we have 
      $$
      a^2+\dist^2(k,k_0)=\dist^2((0,k),(a,k_0))\le \dist^2((0,k),(0,k_0))=\dist^2(k,k_0).
      $$ 
      This is a contradiction to $|a|\in [\eps_0,D]$, and we conclude the proof. %and then take $T\defeq T(R)$ sufficiently large so that $\dist_g(p,\gamma(T))>D$. For any $p\in M$ such that $\dist_g(p,\gamma(0))>R$, the Abresch-Gromoll excess estimate \cite{AG}*{Proposition 2.3} asserts that the excess of $p$ w.r.t. $\gamma|_{[0,T]}$ is controlled as follows 
\end{proof}

We are at a good stage to generalize the result of \cite{antonelli2023isoperimetric}*{Theorem 3.10} to nonnegative Ricci curvature.

    \begin{proposition}\label{prop:conv}
         Let $(M,g)$ be an open $n$-manifold with $\Ric_g\ge 0$ such that $M$ is noncollapsed \eqref{eq:ncends} %and linear volume growth \eqref{eq:linearvol} 
         and $M$ does not split. Let $\gamma$ be a ray in $M$ and $b_\gamma$ be the associated Busemann function. Suppose there exists $D>0$ such that $\dist_g(p,\gamma)<D$ for every $p\in M$. Take a sequence $t_i\nearrow\infty$ such that $(M,\gamma(t_i))$ converges to $(\R\times K, (0,k_0))$ for some compact $\ncRCD(0,n-1)$ space $K$ and $k_0\in K$. Denote by $P:\R\times K\to \R$ the projection onto the $\R$-factor, the following holds.
         \begin{enumerate}
             \item\label{item:L1conv} $b_{\gamma}-t_i$ locally uniformly converges to $P$ along the pGH convergence $(M,\gamma(t_i))\to (\R\times K, (0,k_0))$. In particular for any $s,s_1,s_2\in \R$ with $s_1<s_2$ the (characteristic functions of) corresponding (sub)level sets converge.
             \begin{align}
                 \{b_\gamma-t_i\le s\}\to \{P\le s\}&,\  \text{in}\ L^1_{\mr{loc}};\\
                 \{s_1\le b_\gamma-t_i\le s_2\}\to \{s_1\le P\le s_2\}&,\ \text{in}\ L^1.
             \end{align}
             \item \label{item:mono} For any $s\in R$, $\{b_\gamma<s\}$ has finite perimeter, and for any $s_1,s_2\in \R$ with $s_1<s_2$, the following monotonicity holds
             \begin{align}
                 \mr{Per}(\{b_\gamma<s_1\})&\le \mr{Per}(\{b_\gamma<s_2\})\label{eq:mono}\\
                 \lim_{s\to\infty} \mr{Per}(\{b_\gamma<s\})&=\haus^{n-1}(K)\label{eq:lim}.
             \end{align}
             Moreover the following rigidity holds. If $\mr{Per}(b_\gamma <\bar s)=\haus^{n-1}(K)$ for some $\bar s$, then there exists $s_0\ge \bar s$ such that  $(\{b_\gamma= s\},\dist_g)$ is isometric to $K$ (with the metric given in its $\ncRCD(0,N-1)$ structure).
             \item\label{item:isovol} Let $\{q_i\}_{i\in\N^+}$ be an arbitrary diverging sequence in $M$, and the pGH limit space of $(M,q_i)$ be $(\R\times  K',(0,k'_0))$, for some compact $\ncRCD(0,n-1)$ space $K'$ and $ k'_0\in K'$, then $\haus^{n-1}(K)=\haus^{n-1}(K')$.
         \end{enumerate}
    \end{proposition}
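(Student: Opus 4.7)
\emph{Part (1): uniform convergence and $L^1$ convergence of (sub)level sets.} I would first prove pointwise convergence $b_\gamma(p_i)-t_i\to P(q)$ whenever $p_i\in B_R(\gamma(t_i))$ pGH-converges to some $q=(a,k)\in\R\times K$. For such $p_i$, pick $t_{p_i}$ realizing $\dist_g(p_i,\gamma)<D$; Lemma~\ref{lem:uniclose} gives $|b_\gamma(p_i)-t_{p_i}|\to 0$, and $\gamma(t_{p_i})$ stays in a fixed ball about $\gamma(t_i)$. By Proposition~\ref{prop:RayConverge} the ray limits to $\R\times\{k_0\}$ with $\gamma(t_i+s)\to (s,k_0)$, so passing the closest-point property to the limit forces $t_{p_i}-t_i\to a$ (since $(a,k_0)$ is the unique closest point to $(a,k)$ on $\R\times\{k_0\}$). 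Chaining, $b_\gamma(p_i)-t_i\to a=P(q)$. Since each $b_\gamma-t_i$ is $1$-Lipschitz, this pointwise convergence upgrades to locally uniform convergence along the pGH convergence. Uniform convergence plus the null-measure of $\{P=s\}$ yields $L^1_{\text{loc}}$ convergence of sublevel sets; for the global $L^1$ claim on a strip $\{s_1\le b_\gamma-t_i\le s_2\}$, the bounded-distance-to-$\gamma$ hypothesis combined with Corollary~\ref{cor:LevelClose} confines the strip to $B_{R_0}(\gamma(t_i))$ with $R_0$ depending only on $D, |s_1|, |s_2|$.

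\emph{Part (2): finite perimeter, monotonicity, limit, rigidity.} Local finiteness of perimeter of $\{b_\gamma<s\}$ follows from $b_\gamma$ being $1$-Lipschitz. Monotonicity \eqref{eq:mono} follows from the distributional subharmonicity $\Delta b_\gamma\ge 0$: integrating over $\{s_1<b_\gamma<s_2\}$ and using Gauss--Green with $|\nabla b_\gamma|\le 1$ (with $|\nabla b_\gamma|=1$ a.e.\ on the smooth set) yields $0\le \mr{Per}(\{b_\gamma<s_2\})-\mr{Per}(\{b_\gamma<s_1\})$. The coarea estimate $\int_a^b\mr{Per}(\{b_\gamma<s\})\,\di s\le \volg(\{a\le b_\gamma\le b\})$ combined with monotonicity gives $\mr{Per}(\{b_\gamma<a\})\le \volg(\{a\le b_\gamma\le b\})/(b-a)$; inserting $a=t_i+s_1$, $b=t_i+s_2$ and sending $i\to\infty$ via the $L^1$ convergence from Part (1) yields the upper bound $\limsup_i\mr{Per}(\{b_\gamma<t_i+s_1\})\le \haus^{n-1}(K)$. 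Lower semicontinuity of perimeter along the $L^1_{\text{loc}}$ convergence of Part (1) gives the matching lower bound, so monotonicity pins the limit \eqref{eq:lim} and in particular total finiteness of $\mr{Per}(\{b_\gamma<s\})$ for every $s$. For the rigidity, saturation $\mr{Per}(\{b_\gamma<\bar s\})=\haus^{n-1}(K)$ together with monotonicity forces equality for all $s\ge \bar s$, which saturates the Gauss--Green bound and yields $\Delta b_\gamma\equiv 0$ and $|\nabla b_\gamma|\equiv 1$ on $\{b_\gamma>\bar s\}$; then Cheeger--Gromoll splitting applied to this harmonic function of unit gradient produces $s_0\ge \bar s$ for which $\{b_\gamma=s_0\}$ is isometric to $K$.

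\emph{Part (3) and main obstacle.} For (3), the bounded-distance-to-$\gamma$ hypothesis supplies $t_{q_i}\to\infty$ with $\dist_g(q_i,\gamma(t_{q_i}))<D$, and up to subsequence $(M,g,\gamma(t_{q_i}))$ pGH converges to some $(\R\times K'',(0,k_0''))$; since $q_i$ converges to a point within distance $D$ of $(0,k_0'')$ in this limit, $(M,g,q_i)$ pGH-converges to the same cylinder with translated base point, forcing $K'=K''$ up to isometry. Applying Part (2) to both $(t_i)$ and $(t_{q_i})$ gives $\haus^{n-1}(K)=\lim_{s\to\infty}\mr{Per}(\{b_\gamma<s\})=\haus^{n-1}(K'')=\haus^{n-1}(K')$. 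I expect the most delicate step to be the monotonicity together with its rigidity clause: it requires interpreting $\Delta b_\gamma$ as a nonnegative Radon measure and verifying the Gauss--Green identity across level sets of a merely Lipschitz function, and then extracting splitting rigidity on an end of $M$ from perimeter saturation. Everything else reduces to combining these ingredients with Lemma~\ref{lem:uniclose}, Proposition~\ref{prop:RayConverge}, and lower semicontinuity of perimeter under pGH convergence.
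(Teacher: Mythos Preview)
Your proposal is essentially correct and matches the paper's approach: the paper defers items \eqref{item:L1conv} and \eqref{item:mono} verbatim to \cite{antonelli2023isoperimetric}*{Theorem 3.10} once Lemma~\ref{lem:uniclose} is in hand, and your sketch fills in precisely those details; for item \eqref{item:isovol} the paper's proof is the same as yours (replace $q_i$ by a nearby point on $\gamma$, observe the pGH limits agree, then read off equality of cross-section volumes from the common limit $\lim_{s\to\infty}\mr{Per}(\{b_\gamma<s\})$). The one place where your wording is slightly off is the rigidity clause: you invoke ``Cheeger--Gromoll splitting,'' but that theorem needs a line in a complete manifold, whereas here you only have a harmonic $b_\gamma$ with $|\nabla b_\gamma|\equiv 1$ on the \emph{end} $\{b_\gamma>\bar s\}$; the paper instead cites Kasue's half-space rigidity \cite{KasueRigidity}, which is exactly the Bochner-type argument you have in mind packaged for this boundary situation.
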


    The proof of item \eqref{item:L1conv} and item \eqref{item:mono} can be done verbatim as in \cite{antonelli2023isoperimetric}*{Theorem 3.10} given Lemma \ref{lem:uniclose}, so we skip the proof. %follows from combining item \eqref{item:L1conv} with the fact that $b_\gamma$ is subharmonic in the sense of distribution among other things. The monotonicity \ref{eq:mono} is independent of item \eqref{item:L1conv}. The following fact \cite{Semola2021constancy} is used. For $\mc{L}^1$-a.e. $s_1,s_2\in \R$ 

     For the rigidity, it is the same as \cite{antonelli2023isoperimetric}*{Theorem 3.10}. At an intuitive level, If $\mr{Per}(b_\gamma <\bar s)=\haus^{n-1}(K)$ for some $\bar s$, then $b_\gamma$ becomes a harmonic function in $\{b_\gamma>\bar s\}$, then a rigidity theorem of Kasue \cite{KasueRigidity} can be invoked to deduce the rigidity we seek for.
      
     We prove item \eqref{item:isovol}, which also completes the proof of Theorem \ref{thm:splitting}.
    \begin{proof}[proof of item \eqref{item:isovol}]
        Let $r_i=b_\gamma(q_i)$. It follows from the bounded distance to $\gamma$ and Corollary \ref{cor:LevelClose} that $\gamma(r_i)\in B_{2D}(p_i)$, so by Theorem \ref{thm:AlmostSplitting}, $(M,g,\gamma(r_i))\to (\R\times K',\dist, x')$ for some $x\in \R\times K'$ possibly different from $(0,k_0')$. We apply \eqref{item:mono} to obtain that $$\haus^{n-1}(K')=  \lim_{s\to\infty} \mr{Per}(\{b_\gamma<s\})=\haus^{n-1}(K).$$  %It is derived from \eqref{item:mono} that for any limit space at infinity taken w.r.t. points along $\gamma$ the compact factor has $\haus^{n-1}$ volume $ \lim_{s\to\infty} \mr{Per}(\{b_\gamma<s\})$. Moreover, it is noticed in the first part of proof of Theorem \ref{thm:splitting} that the pGH limit of $(M,q_i)$ can be realized as a pGH limit taken w.r.t. sequence of points along $\gamma$.
    \end{proof}

We are finally in position to achieve the goal of this section. 
    
    \begin{proof}[Proof of Theorem \ref{thm:equi}]
    We show $\eqref{item:linearvol}\Rightarrow\eqref{item:uniclose1}\Rightarrow\eqref{item:uniclose2}\Rightarrow\eqref{item:linearvol}$ and $\eqref{item:linearvol}\Rightarrow\eqref{item:splitting2}\Rightarrow\eqref{item:splitting1}
    \Rightarrow\eqref{item:uniclose2}$.
        
        \eqref{item:linearvol}$\Rightarrow$ \eqref{item:uniclose1} is Proposition \ref{prop:bddDist}.

        \eqref{item:uniclose1}$\Rightarrow$ \eqref{item:uniclose2} is trivial.

        \eqref{item:uniclose2}$\Rightarrow$ \eqref{item:linearvol}  We infer from Lemma \ref{lem:NoSplitting} that the dichotomy in \eqref{item:uniclose2} are mutually exclusive. Assume $M$ does not split and
        \begin{align}
             %d&\defeq \sup_{t\in \R}\mr{diam}(b_\gamma^{-1}(t))<\infty\label{eq:unidiam}\\
             m&\defeq \inf_{x\in M}b_\gamma(x)>-D.
        \end{align}
        Take $x\in B_R(\gamma(0))$. By definition $b_\gamma(x)\le \dist_g(x,\gamma(0)) \le R$, so $x\in \{b_\gamma\le R\}$.
        %Fix $R_0>2D$. For any $R>R_0$, take $x\in B_R(\gamma(0))$. If $\dist_g(x,\gamma(0))\le R_0$, then $b_\gamma(x)\le \dist_g(x,\gamma(0)) \le R_0<R$.  If $\dist_g(x,\gamma(0))>R_0$, let $t_x\ge 0$ be such that $\dist_g(x,\gamma)=\dist_g(x,\gamma(t_x))$. Corollary \ref{cor:LevelClose} implies that $b_\gamma(x)\le t_x+D$, and
        %Take $\eps=1$, Lemma \ref{lem:uniclose} implies that there is a corresponding $R_0>0$. For any $R>R_0$, take $x\in B_R(\gamma(0))$. If $\dist_g(x,\gamma(0))\le R_0$, then $b_\gamma(x)\le \dist_g(x,\gamma(0)) \le R_0<R$. If $\dist_g(x,\gamma(0))>R_0$, let $t_x\ge 0$ be such that $\dist_g(x,\gamma)=\dist_g(x,\gamma(t_x))$. Again Lemma \ref{lem:uniclose} implies that $b(x)\le t_x+1$ and 
        %$$
        %t_x\le \dist_g(\gamma(0),\gamma(t_x))\le \dist_g(\gamma(0),x)+\dist_g(x,\gamma(t_x))\le R+D.
        %$$
        %So $B_R(\gamma(0))\subset\{b_\gamma\le R+2D\}$. 
        We invoke also item \eqref{item:mono} of Proposition \ref{prop:conv} and the coarea formula to estimate that
        \begin{align}\label{eq:BallLevel}
            \volg(B_R(\gamma(0)))&\le \haus^n(\{b_\gamma\le R\}) \notag\\
            &\le\int_m^{R}\mr{Per}(\{b_\gamma< r\})\di r= \lim_{r\to\infty}\mr{Per}(\{b_\gamma< r\}) (R+D).
        \end{align}
         The limit space at infinity taken along $\gamma$ splits as $\R\times K$ by Theorem \ref{thm:AlmostSplitting}. We deduce from Lemma \ref{lem:limitBdd} that $K$ is compact hence has finite $\haus^{n-1}$ volume because $K$ is $\ncRCD(0,n-1)$. In particular, $\lim_{r\to\infty}\mr{Per}(\{b_\gamma< r\})=\haus^{n-1}(K)$ is finite.

        $\eqref{item:linearvol}\Rightarrow \eqref{item:splitting2}$ is Theorem \ref{thm:splitting}.

        $\eqref{item:splitting2} \Rightarrow \eqref{item:splitting1}$ is trivial.

         $\eqref{item:splitting1} \Rightarrow \eqref{item:uniclose2}$  
         If $M$ splits off $\R$ this is clear. Assume $M$ does not split. We claim that the $\gamma$ provided by \eqref{item:splitting1} is the ray we search for and argue by contradiction. 
         %We first notice that due to the monotonicity \ref{item:mono} of Proposition \ref{prop:conv}, for all sequence $t_i'\to \nearrow\infty$ so that $(M, \gamma{t_i'})$ pGH converges, it must converge to a limit space of the form $\R\times K'$ with $K'$ compact. 
         It follows from the proof of Proposition \ref{prop:bddDist} that there exists a ray $\sigma$ with 
         \begin{equation}\label{eq:perp2}
            \sigma(0)=\gamma(0),\ \dist_g(\sigma(s),\gamma)=\dist_g(\sigma(s),\gamma(0))=s, \ \forall s\ge 0.
        \end{equation}
The first item there is excluded because of our assumption. 

We claim that $b_\gamma(\sigma(s))$ is bounded from above. If not, there exists $s_i$ such that $t_i\defeq b_\gamma(\sigma(s_i))\to \infty$. Then it is easily seen by \eqref{eq:perp2} that $s_i=\dist_g(\sigma(s_i),\gamma(0))\ge b_\gamma(\sigma(s_i))=t_i\to \infty$. On the other hand, up to subsequence $(M,\gamma(t_i))$ converges to $\R\times K$ and $K$ is compact. Proposition \ref{prop:LevelDiam} shows that $\limsup_{i\to \infty}\mr{diam}(b_\gamma^{-1}(t_i))<\infty $. However, this would force $s_i\le \dist_g(\gamma(t_i),\sigma(s_i))\le \mr{diam}(b_\gamma^{-1}(t_i))<\infty$, a contradiction. The claim is proved. Let us assume $\sigma\subset \{b_\gamma\le T\}$ for some $T$. There is a $t_j$ such that $t_j>T+1$. The proof of Lemma \ref{lem:bddBuse} can be applied to show that $\{t_j-1\le b_\gamma\le t_j\}$ is compact. Now $M\setminus \{t_i-1\le b_\gamma\le t_i\}$ has at least two noncompact connected components contained in $\{b_\gamma\le T\}$ and $\{b_\gamma >t_i\}$ because both of them contain a ray, which means $M$ has at least $2$ ends. So $M$ splits, a contradiction.
%If not, $\sigma$ intersects all of level sets $\{b_\gamma=t\}$ for sufficiently large $t$. Meanwhile, notice that $K$ has finite diameter, the almost splitting theorem in turn shows that $\mr{diam}(b_\gamma^{-1}(t_i))\le \mr{diam}(K)+1$ for all large $i$. Let $s_i$ be such that $b_\gamma(\sigma(s_i))=t_i$, it is easily seen by \eqref{eq:perp2} that $s_i=\dist_g(\sigma(s_i),\gamma(0))\ge b_\gamma(\sigma(s_i))=t_i\to \infty$. However,  $s_i\le \dist_g(\gamma(t_i),\sigma(s_i))\le \mr{diam}(b_\gamma^{-1}(t_i))\le \mr{diam}(K)+1$, a contradiction. Let us assume $\sigma\subset \{b_\gamma\le T\}$ for some $T$. There is a $t_i$ such that $t_i>T+1$. The proof of Lemma \ref{lem:bddBuse} can be applied to show $\{t_i-1\le b_\gamma\le t_i\}$ is compact. Now $M\setminus \{t_i-1\le b_\gamma\le t_i\}$ has at least two noncompact connected components $\{b_\gamma\le T\}$ and $\{b_\gamma >t_i\}$, then $M$ has at least $2$ ends, so $M$ splits which contradicts our assumption.
    \end{proof}

\section{Application to isoperimetric problem}\label{sec:iso}
   The results in previous section clear an obstacle of studying the existence of isoperimetric sets of a given large volume. Therefore we are able to generalize \cite{antonelli2023isoperimetric}*{Theorem 5.13} to nonnegative Ricci curvature. Except for the fact that the limit space at infinity is unique. %Similar to Remark \ref{rmk:Honda}, the discussion here generalizes to $\ncRCD(0,n)$ spaces. 
   Note that for isoperimetric problem we only consider noncollapsed spaces because of \cite{isoregion}*{Proposition 2.18}. It is worth pointing out that most of the technical work has been done in \cite{antonelli2023isoperimetric}.
    
    Let us first briefly recall the problem without the attempt to exhaust literature in this field of study. Let $(M,g)$ be an open $n$-manifold with $\Ric_g\ge 0$. The isoperimetric problem asks for a given volume $V>0$, the existence of a set of finite perimeter that realizes 
    \begin{equation*}
    	\inf\{\mr{Per}(E): E\subset M\ \text{Borel}, \haus^n(E)=V\}\defeq I_M(V).
    \end{equation*}
   We call $I_M:[0,\volg(M)]\to \R_{\ge 0}$ the isoperimetric profile of $M$. A set of finite perimeter $E$ such that $\mr{Per}(E)=I_M(V)$ is called an isoperimetric set. One of the difficulties in establishing the existence results for isoperimetric sets is that a minimizing sequence may escape to infinity. It is now understood that an asymptotic mass decomposition technique can be applied to $\RCD(K,N)$ spaces to study how a minimizing sequence can escape to infinity, see \cite{antonelli2022isocomparison}*{Theorem 4.1}, \cite{antonelli2023isoperimetric}*{Theorem 2.22} and references therein. For $K=0$, this technique can be pushed further \cite{antonelli2022isocomparison}, deriving
   
   \begin{proposition}[\cite{antonelli2023isoperimetric}*{Proposition 2.25}]\label{prop:asym}
   	Let $(M,g)$ be an open $n$-manifold with $\Ric_g\ge 0$ and noncollapsed ends \eqref{eq:ncends}. Given $V>0$, let $\{E_i\}_{i\in \N}$ be a minimizing sequence for $I(V)$ with bounded perimeters. Then exactly one of the following happens  
   	\begin{enumerate}
   		\item \label{item:stay} either there exists an isoperimetric set $E$ for $I_M(V)$,
   		\item \label{item:escape} or there exist a limit space at infinity $(X,\dist,\meas)$ and an isoperimetric set $E\subset X$ so that $I_M(V)=\mr{Per}(E)$.
   	\end{enumerate}
   
   \end{proposition}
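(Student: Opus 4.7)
My approach would combine a general asymptotic mass decomposition for minimizing sequences in $\RCD(0,N)$ spaces with an isoperimetric comparison inequality available at $K=0$. Since $(M,g)$ with $\Ric_g\ge 0$ is an $\RCD(0,n)$ space and the noncollapsing assumption \eqref{eq:ncends} makes every pGH limit at infinity a genuine $\ncRCD(0,n)$ space, I can invoke the general decomposition result (cf.\ \cite{antonelli2022isocomparison}*{Theorem 4.1} and \cite{antonelli2023isoperimetric}*{Theorem 2.22}) applied to the bounded-perimeter minimizing sequence $\{E_i\}$. Up to a subsequence, this writes $E_i$ as a staying part converging in $L^1_{\mr{loc}}(M)$ to a finite-perimeter set $E^0\subset M$, plus finitely many escaping pieces $F^j_i$ whose re-centerings $(M,g,p_i^j,F^j_i)$ converge, in the appropriate pointed-measured sense, to isoperimetric sets $F^j\subset X^j$ inside limit spaces at infinity $(X^j,\dist_j,\meas_j,x_j)$. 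The decomposition satisfies the mass balance $V=\haus^n(E^0)+\sum_j \meas_j(F^j)$ and the perimeter balance $I_M(V)=\mr{Per}_M(E^0)+\sum_j \mr{Per}_{X^j}(F^j)$, with each $F^j$ itself isoperimetric in $X^j$ for its own volume.

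The next ingredient is the comparison $I_{X^j}(v)\ge I_M(v)$ for every $v>0$ and every limit space at infinity $X^j$. To prove it, I would fix an isoperimetric set $A\subset X^j$ of volume $v$ and produce approximating finite-perimeter sets $A_i\subset M$ near $p_i^j$ with $\haus^n(A_i)\to v$ and $\mr{Per}_M(A_i)\to \mr{Per}_{X^j}(A)$, exploiting the volume convergence theorem of De Philippis--Gigli \cite{DPG17} together with the stability of BV functions and perimeters under noncollapsed pGH convergence. A small volume correction (trimming or adding a tiny ball, whose perimeter contribution is of order $\eps^{(n-1)/n}\to 0$) makes the volume equal $v$ exactly, giving the desired inequality. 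Combined with the perimeter balance above and the subadditivity of $I_M$ (which is concave and vanishes at $0$), every intermediate inequality must saturate, so that each $F^j$ and $E^0$ is an actual minimizer for $I_M$ at its respective volume.

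The final step, which I expect to be the main obstacle, is a merging argument that collapses the decomposition so that at most one term remains nontrivial. If two escaping pieces $F^{j_1}, F^{j_2}$ both carry positive mass, one moves one of them arbitrarily far along a new diverging sequence of base-points, producing a new minimizing sequence whose pGH limit places both pieces inside a single limit space at infinity (by choosing the new base-point midway between them and using uniform local volume bounds to keep competitors in $M$). Analogously, if both $E^0$ and some $F^j$ carry positive mass, one shifts $E^0$ to infinity to produce a competitor configuration with the same volume and perimeter realized entirely at infinity; this contradicts the strict positivity of either term unless one of them is empty. The delicate point is to genuinely realize the decomposed configuration as a single piece in a well-defined limit space, which requires careful bookkeeping of pointed-measured convergence and use of the almost splitting machinery of Section \ref{sec:ALS}. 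Once the merger succeeds, one is left with either $N=0$ (case \eqref{item:stay}: $E^0$ itself is an isoperimetric set of volume $V$ in $M$) or $N=1$ with $E^0=\emptyset$ (case \eqref{item:escape}: $F^1\subset X^1$ realizes $I_M(V)=\mr{Per}_{X^1}(F^1)$), and the two alternatives are mutually exclusive because they correspond to whether the $L^1_{\mr{loc}}$ limit of $E_i$ in $M$ has positive mass or not.
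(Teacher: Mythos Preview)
The paper does not prove this proposition; it is quoted from \cite{antonelli2023isoperimetric}*{Proposition 2.25}, with the underlying machinery attributed to \cite{antonelli2022isocomparison}. So there is nothing in the present paper to compare against, and you are really reconstructing the cited argument.

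Your first two steps---the asymptotic mass decomposition and the comparison $I_{X^j}\ge I_M$ obtained by pulling isoperimetric sets in $X^j$ back to $M$---are the correct skeleton and match the cited references. The gap is in your merging step. The geometric manoeuvre you sketch (moving one escaping piece along a new diverging sequence, choosing a base-point ``midway'', or ``shifting $E^0$ to infinity'') does not do what you want: if two escaping clusters drift apart, a midpoint observer still sees both escape and you do not obtain a single limit space containing their union; and translating $E^0$ towards infinity need not preserve its perimeter, since the local geometry of $M$ near $E^0$ has no reason to match the geometry near the $p_i^j$. Nothing in Section~\ref{sec:ALS} helps here either---the almost splitting theorem controls metric balls along a ray, not the transport of arbitrary finite-perimeter sets between regions of $M$.

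The route taken in the cited references is analytic rather than geometric. In $\RCD(0,N)$ the isoperimetric profile satisfies a sharp differential inequality making $I_M^{N/(N-1)}$ concave with $I_M(0)=0$; combined with the strict concavity of $t\mapsto t^{(N-1)/N}$ on $(0,\infty)$, this yields \emph{strict} subadditivity $I_M(V_1+V_2)<I_M(V_1)+I_M(V_2)$ whenever $V_1,V_2>0$. Plugged into the saturated chain
\[
I_M(V)=\mr{Per}_M(E^0)+\sum_j \mr{Per}_{X^j}(F^j)\ \ge\ I_M(\haus^n(E^0))+\sum_j I_M(\meas_j(F^j))\ \ge\ I_M(V),
\]
this immediately forces at most one term to carry positive mass, and the dichotomy follows. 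That concavity input is the missing ingredient; your geometric merging, as sketched, does not close the argument.
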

     
   Therefore, the focus is to understand the isoperimetric sets in the limit space at infinity. It seems that this is even harder than studying it on manifolds. However, splitting drastically simplifies the structure of a limit space at infinity. This is most powerful when $M$ has linear volume growth. As shown in Theorem \ref{thm:splitting}, in this case any limit space at infinity is a cylinder $\R\times K$, and the isoperimetric sets are well understood in such cylinders by \cite{antonelli2023isoperimetric}*{Appendix A}. 
   
   We set $\mathfrak{D}\defeq \haus^{n-1}(K)$ for any limit space at infinity $\R\times K$. It is justified by Theorem \ref{thm:splitting} that all $K$'s have the same $\haus^{n-1}$ volume. The following is a generalization of \cite{antonelli2023isoperimetric}*{Theorem 5.13} to Ricci curvature lower bound.
   
   \begin{theorem}\label{thm:AP}
   	Let $(M,g)$ be an open $n$-manifold with $\Ric_g\ge 0$ such that $M$ is noncollapsed \eqref{eq:ncends}, has linear volume growth \eqref{eq:linearvol}, and $M$ does not split.  
   	Then the following holds.
   	\begin{enumerate}
   		\item \label{item:exist} There exists $V_0\defeq V_0(v,n,\mathfrak{D})$, such that an isoperimetric set exists for any $V\ge V_0$. (recall that $v$ is from \eqref{eq:ncends}.)
   		\item \label{item:rigidity} For every $V>0$, we have $I(V)\le \mathfrak{D}$. If for some $V'>0$, the equality $I(V')= \mathfrak{D}$ holds, then it holds for all $V\ge V'$ and there exists an isoperimetric set $\Omega$ of volume $\haus^{n-1}(\Omega)\ge V'$ such that $(\partial \Omega,\dist_g|_{\partial \Omega})$ is a $\ncRCD(0,n-1)$ space and that $(M\setminus \Omega,\dist_g)$ is isometric to $([0,\infty)\times \partial \Omega, \dist_g|_{\partial \Omega}\otimes\dist_{\mr{eu}})$.
   		\item \label{item:global} Let $E_i$ be a sequence of isoperimetric sets with $\haus^{n}(E_i)\to \infty$, $x_i\in \partial E_i$. Let $(\R\times K,\dist_{\mr{eu}}\otimes \dist_K, (0,k_0))$ be the pGH limit space of $(M,g, x_i)$. Then $E_i$ converges to $(-\infty,0)\times K$ in $L^1_{\mr{loc}}$ and $\partial E_i$ converges to $\{0\}\times K$ along the pGH convergence. Moreover there exists a sequence of isoperimetric sets $\Omega_i\Subset\Omega_{i+1}$ so that $\cup_i \Omega_i=M$ and 
   		\[\lim_{V\to\infty}I_M(V)=\mathfrak{D}.\]
   	\end{enumerate}
   \end{theorem}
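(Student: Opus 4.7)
The plan is to follow the blueprint of \cite{antonelli2023isoperimetric}*{Theorem 5.13}, substituting the Alexandrov-geometric inputs used there with the Ricci-curvature technology developed in Section \ref{sec:infinity} --- most importantly Theorem \ref{thm:splitting} and Proposition \ref{prop:conv}. The central observation is that under our assumptions, every limit space at infinity is a cylinder $\R\times K$ with $\haus^{n-1}(K)=\mathfrak{D}$, so any escape of mass to infinity produces an isoperimetric set in such a cylinder, whose structure is already classified in \cite{antonelli2023isoperimetric}*{Appendix A}: for large enough volumes the minimizers are strips $(a,b)\times K$ of perimeter $2\mathfrak{D}$.

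For item \eqref{item:exist}, first I would establish $\limsup_{V\to\infty}I_M(V)\le \mathfrak{D}$. Fix a ray $\gamma$; by Lemma \ref{lem:bddBuse} and Proposition \ref{prop:uniBddDiam} the sublevel sets $\{b_\gamma<s\}$ are compact with volumes diverging as $s\to\infty$, while by Proposition \ref{prop:conv}\eqref{item:mono} their perimeters converge to $\haus^{n-1}(K)=\mathfrak{D}$. Now invoke Proposition \ref{prop:asym}: if no isoperimetric set exists in $M$ at volume $V$, then one exists in some limit cylinder $\R\times K$ with the same volume and with perimeter $I_M(V)$. From \cite{antonelli2023isoperimetric}*{Appendix A}, for all sufficiently large $V$ (depending only on $n,v,\mathfrak{D}$) the minimizers in $\R\times K$ are strips with perimeter exactly $2\mathfrak{D}$. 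Since $I_M(V)\le \mathfrak{D}+o(1)<2\mathfrak{D}$ for $V$ large, escape is ruled out, so case \eqref{item:stay} of Proposition \ref{prop:asym} holds and an isoperimetric set exists.

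For item \eqref{item:rigidity}, the inequality $I_M(V)\le \mathfrak{D}$ is improved to hold for every $V>0$ by combining the sublevel-set construction above with the monotonicity \eqref{eq:mono} of $s\mapsto \mr{Per}(\{b_\gamma<s\})$: since this quantity increases to $\mathfrak{D}$, every value below $\mathfrak{D}$ is realized, giving competitors of every volume with perimeter $<\mathfrak{D}$. If $I_M(V')=\mathfrak{D}$, then by an isoperimetric comparison one can find a level $\bar s$ where the sublevel set saturates, i.e.~$\mr{Per}(\{b_\gamma<\bar s\})=\mathfrak{D}=\haus^{n-1}(K)$; the rigidity clause of Proposition \ref{prop:conv}\eqref{item:mono} then supplies the isometric half-cylinder splitting of $M\setminus \Omega$, and $\mr{Per}(\{b_\gamma<s\})\equiv\mathfrak{D}$ for $s\ge \bar s$ forces $I_M(V)=\mathfrak{D}$ for all $V\ge V'$.

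For item \eqref{item:global}, given isoperimetric sets $E_i$ with $\haus^n(E_i)\to\infty$ and $x_i\in\partial E_i$, Theorem \ref{thm:splitting} yields (up to subsequence) a pGH limit $\R\times K$ at the base points $x_i$. Compactness of finite-perimeter sets under pGH convergence and lower semicontinuity of perimeter produce a limit isoperimetric set in $\R\times K$ of perimeter $\le \mathfrak{D}$; by the Appendix A classification the only such sets with one connected boundary component passing through $(0,k_0)$ are half-spaces $(-\infty,0)\times K$, giving the $L^1_{\mr{loc}}$ convergence and the boundary convergence. The exhaustion $\{\Omega_i\}$ is then constructed as in \cite{antonelli2023isoperimetric}*{Theorem 5.13} and the limit $\lim_{V\to\infty}I_M(V)=\mathfrak{D}$ follows from the two-sided bound.

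The main obstacle I expect is the delicate interplay between perimeter lower semicontinuity along pGH convergence, the equality of cross-sectional volumes (Proposition \ref{prop:conv}\eqref{item:isovol}) needed so that $\mathfrak{D}$ is well defined, and the rigidity case of Proposition \ref{prop:conv}\eqref{item:mono} (which ultimately rests on Kasue's rigidity theorem). In \cite{antonelli2023isoperimetric} these tie together cleanly because the Alexandrov setting guarantees uniform control of boundary behavior; here one must carefully verify that the De Philippis--Gigli volume convergence and the perimeter stability in $\RCD(0,n-1)$ spaces provide adequate substitutes, which is where Lemma \ref{lem:AnnulusVol} and Lemma \ref{lem:limitBdd} do most of the work.
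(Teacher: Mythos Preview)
Your proposal is correct and follows essentially the same approach as the paper: both defer to the proof of \cite{antonelli2023isoperimetric}*{Theorem 5.13}, replacing the Alexandrov inputs by Theorem \ref{thm:splitting} (equal cross-section volume $\mathfrak{D}$), Proposition \ref{prop:conv} (monotonicity and Kasue-type rigidity of $\mr{Per}(\{b_\gamma<s\})$, and $L^1$ convergence of sublevel sets), and Proposition \ref{prop:asym} together with the Appendix A cylinder classification to rule out escape for large $V$. In fact your sketch is more explicit than the paper's own outline, which simply points to these substitutions without elaborating; the ingredients and logical structure match exactly.
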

   Again, the proof will be almost the same as the original one \cite{antonelli2023isoperimetric}*{Theorem 5.13}. Instead of reproducing the proof, we only point out how results in this note are applied. We refer the readers to the aforementioned paper for a detailed proof. 
   
   For item \eqref{item:exist}. The only subtlety compared to the original proof is the loss of uniqueness of the limit spaces at infinity, but this is resolved by Theorem \ref{thm:splitting} which says even different limit spaces at infinity have the same volume. One shows that for a minimizing sequence it cannot be the case \eqref{item:escape} in Proposition \ref{prop:asym}. If this was the case, then one can use sublevel sets $\{b_\gamma\le s\}$ as a competitor for $I_M(V)$. %then there is an isometric set $E$ in a limit space at infinity, which is always of the form $\R\times K$. Once this is the case, for $V_0$ large enough \cite{antonelli2023isoperimetric}*{Lemma A.6} implies that $E$ must be a truncated cylinder $E=(0,a)\times K$ for some $a>0$. So if $V>V_0$ then $I_M(V)=\mr{Per}(E)=2\mathfrak{D}$. Meanwhile for any ray $\gamma$, its associated Busemann function $b_\gamma$ satisfy that $\mr{Per}(\{b_\gamma\le s\})\le \mathfrak{D}$ guaranteed by the combination of Theorem \ref{thm:equi}, \eqref{item:uniclose2} and Proposition \ref{prop:conv}, \eqref{eq:mono}. Since $s\in \R \mapsto \haus^n(\{b_\gamma\le s\})$ is an nondecreasing, continuous and surjective map onto $[0,\infty)$ given that $M$ is noncompact, we see that for each $V>V_0$ there exists $s\in\R$ so that $\{b_\gamma\le s\}$ serves as a competitor for $I_M(V)$, i.e.\ , $\haus^n(\{b_\gamma\le s\})=V$ and $I_M(V)\le \mr{Per}(\{b_\gamma\le s\})\le \mathfrak{D}$, a contradiction. %However there is a competitor for $I_M(V)$ in $M$, which is some sublevel set of some Busemann function $b_\gamma$ of a ray $\gamma$ whose existence is guaranteed by Theorem .
   
   For item \eqref{item:rigidity}. Nothing need to be changed. This is the Kasue-type rigidity in Proposition \ref{prop:conv} item \eqref{item:mono}.
   
   For item \eqref{item:global}. With Theorem \ref{thm:splitting}, Lemma \ref{lem:uniclose} and Proposition \ref{prop:conv}, \eqref{item:L1conv}. The proof can also be done verbatim. 

  The following consequence is pointed out by the referee.
   \begin{corollary}
       Let $(M,g)$ be an open $n$-manifold with $\Ric_g\ge 0$ such that $M$ is noncollapsed \eqref{eq:ncends}, has linear growth \eqref{eq:linearvol}, and $M$ does not split. Fix a base point $o\in M$, then 
       \[
       \limsup_{r\to\infty} \frac{\volg(B_r(o))}{r}=\mathfrak{D}=\lim_{V\to\infty} I(V).
       \]
   \end{corollary}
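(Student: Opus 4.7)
The second equality $\mathfrak{D}=\lim_{V\to\infty}I_M(V)$ is already part of Theorem \ref{thm:AP} item (\ref{item:global}), so the content of the corollary reduces to establishing the first equality. Since for any two basepoints $o,o'\in M$ one has $B_{r-d}(o')\subset B_r(o)\subset B_{r+d}(o')$ with $d\defeq\dist_g(o,o')$, the quantity $\limsup_{r\to\infty}\volg(B_r(o))/r$ is independent of $o$, so I may take $o=\gamma(0)$ for a convenient ray $\gamma$. Since $M$ does not split, the implication $\eqref{item:linearvol}\Rightarrow\eqref{item:uniclose1}$ in Theorem \ref{thm:equi} (i.e.\ Proposition \ref{prop:bddDist}) guarantees that any ray $\gamma$ satisfies $\dist_g(\cdot,\gamma)<D$ on $M$ for some $D>0$.

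The plan is to sandwich the ball $B_R(\gamma(0))$ between two sublevel sets of the Busemann function $b_\gamma$ and then apply the coarea formula, using the monotone perimeter convergence to $\mathfrak{D}$. Three inputs will be used: (i) Proposition \ref{prop:uniBddDiam}, which provides a uniform upper bound $D_0$ on $\mr{diam}_g(b_\gamma^{-1}(t))$; (ii) Lemma \ref{lem:bddBuse}, which together with the non-splitting assumption guarantees that $m\defeq\inf_M b_\gamma>-\infty$ and that every sublevel set $\{b_\gamma\le T\}$ is compact; and (iii) Proposition \ref{prop:conv} item (\ref{item:mono}), which says that $r\mapsto\mr{Per}(\{b_\gamma<r\})$ is nondecreasing and converges to $\mathfrak{D}=\haus^{n-1}(K)$ as $r\to\infty$.

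The sandwich is quick. Since $b_\gamma$ is $1$-Lipschitz with $b_\gamma(\gamma(0))=0$, we have $B_R(\gamma(0))\subset\{b_\gamma\le R\}$. Conversely, if $b_\gamma(x)=t\in[m,R]$, then $\gamma(t)\in b_\gamma^{-1}(t)$ and (i) yields $\dist_g(x,\gamma(t))\le D_0$, so $\dist_g(x,\gamma(0))\le t+D_0\le R+D_0$; hence $\{b_\gamma\le R\}\subset B_{R+D_0}(\gamma(0))$. Coarea together with (iii) then provides the squeeze
\[
(\mathfrak{D}-\eps)(R-r_\eps)\le \haus^n(\{b_\gamma\le R\})=\int_m^R\mr{Per}(\{b_\gamma<r\})\,\di r\le \mathfrak{D}(R-m)
\]
for every $\eps>0$, a threshold $r_\eps$ and all $R$ large enough. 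Combining with the two inclusions and dividing by $R$ (respectively $R+D_0$) gives $\limsup_{R\to\infty}\volg(B_R(\gamma(0)))/R\le\mathfrak{D}$ and $\liminf_{R\to\infty}\volg(B_R(\gamma(0)))/R\ge \mathfrak{D}-\eps$; letting $\eps\to 0$ pins both quantities to $\mathfrak{D}$.

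This is essentially a bookkeeping argument: the geometric content has already been absorbed into the earlier results. The main point where care is required is to verify that the non-splitting hypothesis is invoked in the right places — namely to produce a ray of bounded distance to all of $M$ (Proposition \ref{prop:bddDist}), to make $m=\inf_M b_\gamma$ finite (Lemma \ref{lem:bddBuse}), and to ensure the uniform diameter bound on level sets (Proposition \ref{prop:uniBddDiam}); each of these would fail or become vacuous in the split case $M=\R\times K$ that the hypotheses exclude.
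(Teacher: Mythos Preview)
Your proof is correct and follows essentially the same approach as the paper: sandwich $B_R(\gamma(0))$ between sublevel sets of $b_\gamma$ via the uniform diameter bound on level sets, apply coarea, and invoke the monotone convergence $\mr{Per}(\{b_\gamma<r\})\nearrow\mathfrak{D}$ from Proposition~\ref{prop:conv} (the paper cites Lemma~\ref{lem:NoSplitting} in place of your Proposition~\ref{prop:uniBddDiam} and Lemma~\ref{lem:bddBuse}, but the content is the same). One minor slip: in your reverse inclusion you write ``$\gamma(t)\in b_\gamma^{-1}(t)$'' for $t\in[m,R]$, but $\gamma(t)$ is undefined when $t<0$; this is harmless since $\{b_\gamma\le 0\}$ is compact by Lemma~\ref{lem:bddBuse}, so the inclusion $\{b_\gamma\le R\}\subset B_{R+C}(\gamma(0))$ still holds with a possibly larger constant~$C$.
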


   \begin{proof}
       Take a ray $\gamma$ emanating from $o$. We let $D>0$ be such that $\dist_g(p,\gamma)<D$ guaranteed by Proposition \ref{prop:bddDist}. Then the level sets of $b_\gamma$ have a uniform diameter upper bound $4D$ and $m\defeq \inf_M b_\gamma>-D$ thanks to Lemma \ref{lem:NoSplitting}. Recall from the proof of Theorem \ref{thm:equi}, specifically \eqref{eq:BallLevel}. We obtained there that for sufficiently large $r>0$, $B_r(o)\subset \{b_\gamma\le r\}$ and 
       \[
       \limsup_{r\to\infty} \frac{\volg(B_r(o))}{r}\le \mathfrak{D}.
       \]
       To show the reverse inequality, we see from the uniform diameter upper bound that $\{b_\gamma\le r\}\subset B_{r+4D}(o)$. By co-area formula we have 
       \[
       \lim_{r\to\infty} \frac 1 r\int_m^r \mr{Per}(\{b_\gamma<t\})\di t\le  \limsup_{r\to\infty} \frac{\volg(B_{r+4D}(o))}{r},
       \]
       it follows that 
       \[
       \mathfrak{D}\le \limsup_{r\to\infty} \frac{\volg(B_r(o))}{r}.
       \]
      This completes the proof since $\lim_{V\to\infty}I_M(V)=\mathfrak{D}$ is contained in item \eqref{item:global} of Theorem \ref{thm:AP}.
   \end{proof}
   
   \begin{remark}\label{rmk:isoprofile}
   	We are unfortunately unable to add the statement 
   	\begin{equation*}
   		\text{There exists $C>0$ such that for any $V>0$, $I_M(V)<C$.}
   	\end{equation*}
   	to the list of equivalence conditions in Theorem \ref{thm:equi}. The argument in \cite{antonelli2023isoperimetric}*{Corollary 3.11 (4) $\Rightarrow$ (1) or (3)} can also be applied in our setting to show that this statement implies that there exists a diverging sequence (in fact, many such sequences exist) for which the limit space at infinity splits off a line. However, to go from one sequence to all sequences seems to be out of reach. Because different sequence may have different ``escape speed'', i.e., the growth of $\dist(x,p_i)$ for fixed $x$ and $p_i\to\infty$. For Alexandrov spaces, the escape speed does not matter and it is shown in \cite{antonelli2023isoperimetric}*{Theorem 5.13(1)} that the limit space at infinity is unique. For Ricci limit spaces, we do not know if two divergent sequences of different escape speed are related to each other or not. %We also do not know if this implies any of the equivalence conditions.
   \end{remark}

Finally, we mention that large volume is a necessary condition for the existence of isoperimetric sets. A counterexample \cite{antonelli2023nonexistence} has been found when the volume is small.

    \section{Examples}\label{sec:example}
    In this section we will borrow the examples from \cite{CN11}. Recall that topologically $\C P^2\sharp\overline{\C P^2}$ is the connected sum of $\C P^2$ and another $\C P^2$ with reversed orientation. The examples we will construct are all noncollapsed, since all the limit spaces at infinity have the same dimension as the manifold we start with.  
    
    \begin{theorem}\label{thm:nonhomeo}
    	There exists an open $5$-manifold $M$ with nonnegative Ricci curvature and linear volume growth such that both $\R\times \mb S^4$ and $\R\times \C P^2\sharp\overline{\C P^2}$ are possible limit spaces at infinity of $M$. In particular the cross sections are not homeomorphic. 
    \end{theorem}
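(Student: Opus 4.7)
The plan is to adapt Example II of Colding--Naber \cite{CN11}, which produces a $5$-manifold with $\Ric\geq 0$, Euclidean volume growth, and two non-homeomorphic asymptotic-cone cross sections ($\mb S^4$ and $\C P^2\sharp\overline{\C P^2}$), into a $5$-manifold $(M,g)$ that has the analogous topological ambiguity at infinity but only linear volume growth. The CN11 metric is a doubly-warped product of the form $dr^2+f(r)^2 g_1+h(r)^2 g_2$ on a smooth $5$-dimensional model; the two topological cross sections arise because the ratio $f(r)/h(r)$ is engineered to take two different limit values along two interleaved sequences of radii $r_i^{a}, r_i^{b}\to\infty$, which after the appropriate normalization produces either the round $4$-sphere or $\C P^2\sharp\overline{\C P^2}$ in the Gromov--Hausdorff limit of the slices.

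The first step is to replace $(f,h)$ with a new pair $(\tilde f,\tilde h)$ that retains the same asymptotic ratio oscillation (so the topology of the cross-sectional GH-limits is preserved) but has bounded cross-sectional $4$-volume $\tilde f^{a_1}\tilde h^{a_2}$ as $r\to\infty$, where $a_1+a_2=4$ are the multiplicities coming from $g_1, g_2$. This forces the $5$-dim volume growth of the resulting manifold to be at most linear; since by construction the end is noncollapsed (the slices converge to nontrivial compact limits), Bishop--Gromov then forces the volume growth to be exactly linear. The second step is to verify $\Ric(\tilde g)\geq 0$. The Ricci tensor of a doubly-warped product reduces to a system of ODE inequalities in $(\tilde f, \tilde h)$; since these are strictly satisfied by the CN11 choice, a perturbation preserving the ratio $\tilde f/\tilde h$ along the distinguished sequences will still satisfy them, provided $\tilde f$ and $\tilde h$ are slowly varying in the modified regime. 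A smooth cap at $r=0$ (the standard closing up of one warping factor) then produces a genuine open manifold $M$.

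To identify the limit cross sections, let $p_i^{a}$ and $p_i^{b}$ be base points at radii $r_i^{a}$ and $r_i^{b}$. Theorem \ref{thm:splitting} forces the pGH limits of $(M,g,p_i^{\bullet})$ to split as $\R\times K^{\bullet}$ with each $K^{\bullet}$ a compact $\ncRCD(0,3)$ space, and by the design of $(\tilde f, \tilde h)$ the metric slice near $p_i^{\bullet}$ is GH-close to the same model cross section as in CN11, so $K^{a}$ is homeomorphic to $\mb S^4$ and $K^{b}$ to $\C P^2\sharp\overline{\C P^2}$. The main obstacle is the triple balance between $\Ric\geq 0$, linear volume growth, and preservation of the topological oscillation: linear volume growth is borderline for Bishop--Gromov, so the ODE constraints arising from $\Ric(\tilde g)\geq 0$ have no volume-integral slack and must be verified pointwise. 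Designing the smooth cap and the tails of $(\tilde f,\tilde h)$ so that the relevant concavity/convexity conditions hold simultaneously with the ratio-oscillation property inherited from CN11 is the technical core of the construction.
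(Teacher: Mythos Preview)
Your high-level plan---adapt \cite{CN11}*{Example II} by damping the radial warping so that cross-sectional volume stays bounded while the topological oscillation of the slices persists---matches the paper's. But two points are genuine gaps rather than routine details.

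First, the structural description is off. The CN11 metric is not a doubly-warped product $dr^2+f(r)^2g_1+h(r)^2g_2$ with fixed $g_1,g_2$; rather, one has a one-parameter family $(g_t)_{t\in(0,2]}$ of metrics on $\C P^2\sharp\overline{\C P^2}$ (with $\Ric_{g_t}\ge 2g_t$, $\tfrac{d}{dt}\mr{vol}_{g_t}=0$, and uniform $C^2$ bounds) that GH-degenerates to a singular $\mb S^4$ as $t\to 0$, and the $5$-metric is $dr^2+h(r)^2 g_{f(r)}$. Because the fiber metric itself varies with $r$, the Ricci tensor acquires mixed components $\Ric_{ir}$ proportional to $f'(r)$ and to covariant derivatives of $\dot g$; these do not appear in a genuine doubly-warped product and must be controlled separately. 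The paper handles them via \cite{CN11}*{Lemma 2.1} and a pointwise Cauchy--Schwarz trick in mixed directions.

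Second, and more seriously, the step ``since the ODE inequalities are strictly satisfied by the CN11 choice, a perturbation preserving the ratio will still satisfy them'' does not go through. Passing from Euclidean to linear volume growth means replacing a warping that grows like $r$ by one that is \emph{bounded}; this is not a perturbation of the CN11 data, and none of the signs in $\Ric_{rr}$, $\Ric_{ij}$ survive automatically. The paper's proof makes very specific choices---$h(r)=D\arctan(\ln r)$ and $f(r)=(1-e^{-r})\sin(\tfrac12\ln\ln r)+1$---precisely so that $-h''/h$ stays positive and dominates the $|f'|^2$ term, while $f'$ decays like $(r\ln r)^{-1}$, slow enough to allow oscillation between $0$ and $2$ but fast enough for the mixed terms to be absorbed. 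The volume-constancy $\tfrac{d}{dt}\mr{vol}_{g_t}=0$ is what lets a single bounded $h$ control the slice volume; your ``bounded $\tilde f^{a_1}\tilde h^{a_2}$'' condition is the right target, but you have not indicated any mechanism for achieving it simultaneously with $\Ric\ge 0$. Finally, closing up the manifold is not just a ``standard cap'': the paper transitions $f$ back to the endpoint $t=2$ (where the family is designed to be closable) and then glues to the compact piece $\mc C_2$ from \cite{CN11} via Perelman's gluing, matching second fundamental forms. Your sketch should at least flag these three ingredients---the family $g_t$ with constant volume, the explicit $(h,f)$ with $\ln\ln$-oscillation, and the gluing to $\mc C_2$---before it can be called a proof.
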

    
    \begin{proof}
    	It is constructed in \cite{CN11}*{Example II} a family of metrics $(g_t)_{t\in(0,2]}$ on $\C P^2\sharp\overline{\C P^2}$, so that
    	\begin{enumerate}
    		\item $\Ric_{g_t}\ge (4-2)g_t= 2 g_t$.
    		\item $\frac\di {\di t} \mr{vol}_{g_t}=0$, i.e., the volume are the same for $t\in (0,2]$.
    		\item\label{item:derivative} $|\partial_s g(s)|, |\partial_s\partial_s g(s)|,|\nabla \partial_s g(s)|\le 1$.
    	\end{enumerate} 
    	 Moreover $(\C P^2\sharp\overline{\C P^2},g_t)$ converges in GH topology to $(\mb S^4,\dist_0)$ with $2$ singular points. Based on this model, we construct a cylinder whose cross sections oscillate between $(\C P^2\sharp\overline{\C P^2}, g_2)$ and $(\C P^2\sharp\overline{\C P^2}, g_\eps)$ for $\eps\to 0^+$. To this end, we construct a Riemannian metric on $[0,\infty)\times \C P^2\sharp\overline{\C P^2}$ of the form %$[0,\infty)\times \C P^2\sharp\overline{\C P^2} \times \mb S^p$  
    	 \[
    	 \di r^2+ h^2(r)g_{f(r)}%+\eta^2(r)g_{\mb S^p}.
    	 \]
    	 \textbf{Step 1.} We start with the case when $r$ is large. %Consider the follow iterative sequence indexed by a integer $k$, given $a_0$ to be determined. 
    	%\begin{equation*}
    		% \ a_{4k+1}=a_{4k}+k,\ a_{4k+2}=a_{4k+1}+a_{4k+1}^2, a_{4k+3}=a_{4k+2}+k,  a_{4k+4}=a_{4k+3}+a_{4k+3}^2,\ k\ge 0.
    	%\end{equation*}
    	%Let $\varphi_k:[0,1]\to [\frac1{k+1},1]$ be a smooth cut off function so that $\varphi_k(0)=1$, $\varphi(1)=\frac1{k+1}$, $\varphi:[0,1]\to [0,1]$ be a smooth cut off function so that $\varphi(0)=1$, $\varphi(1)=0$. Observe that $|\varphi'(r)|\le 4$.
    	
    	%Define $f(r):[a_0,\infty)\to [0,1]$ as follows 
    	%\begin{equation*}
    		%f(r)=\begin{cases}
    		       %1+\varphi(r-2), & r\in [2,3];\\
                 %  2,  & r\in [a_{4k},a_{4k+1}];\\ 
                  % \varphi_k\left(\frac{r-a_{4k+1}}{a_{4k+2}-a_{4k+1}}\right),  & %r\in [a_{4k+1},a_{4k+2}];\\
                  % \frac1k, & r\in [a_{4k+2},a_{4k+3}];\\
                  % \varphi_k\left(1-\frac{r-a_{4k+3}}{a_{4k+4}-a_{4k+3}}\right), & r\in [a_{4k+3},a_{4k+4}];
               %  \end{cases}         
    	%\end{equation*}
    	Define $f(r)\defeq ({1-e^{-r}})\sin({\frac12\ln \ln r})+1\in (0,2)$. clearly, every real number in $[0,2]$ is a possible limit of $f(r)$ as $r\to \infty$. The Ricci tensors are computed in \cite{CN11}*{Lemma 2.1}. %We view $\di r^2+ h^2(r)g_{f(r)}+\eta^2(r)g_{\mb S^p}$ as a warped product $(\di r^2+ h^2(r)g_{f(r)})+\eta^2(r)g_{\mb S^p}\defeq B\times_\eta F$, which also means to treat $\eta$ as a function on $B$. Clearly $B=[0,\infty)\times \C P^2\sharp\overline{\C P^2}$ and $F=\mb S^p$. We also let $N\defeq \C P^2\sharp\overline{\C P^2}$ to simply notations. Now we compute the Ricci tensors. 
    	We use subscript $r$ to represent vector fields along $r$ direction, $i,j$ to represent vector fields along $\C P^2\sharp\overline{\C P^2}$. %and use Greek letters $\alpha,\beta$ to represent vector fields along $\mb S^p$. We first need to compute the Hessian of $\eta$.
    	%\begin{align}
    	%\hess^\eta_{rr}&=\frac{\eta''}{\eta}.\\
    	%\hess^\eta_{ir}&=0.\\
    	%\hess^\eta_{ij}&=-h^{-2}\Gamma^r_{ij}\eta'= \frac{h'\eta'}{h}g_{ij}+\frac{1}{2}{\eta'}\dot g_{ij}	
    	%\end{align}
      %We appeal to \cite{ONeillBook}*{Section 7, Corollary 43} and also 
      In what follows we already incorporated the fact that $g^{ij}\dot g_{ij}= \mr{tr}(\dot g)=\frac{\di }{\di t}\mr{vol}_{g_t}=0$.
    	\begin{align}
    		\label{eq:ricrr}\Ric_{rr} &= -4 \frac{h''}{h} -\frac{1}{4}g^{ij}\dot g_{ja}g^{ab}\dot g_{bj}.\\%-p \frac{\eta''}{\eta}
    		\label{eq:ricir}\Ric_{ir} &=\frac12 \left[\partial_a(g^{ab}\dot g_{bj})+\frac12 \dot g^{qb}\left(\partial_i g_{ab}-g_{ib}g^{pq}\partial_a(g_{pq})\right)\right]\ge -\frac{3}{2}|f'|.\\
    		\Ric_{ij} &=\Ric_{ij}^{g_t}+h^2\left[\left(-\frac{3h'^2}{h^2}-\frac{h''}{h}\right)g_{ij}-\frac{5h'}{2h}\dot g_{ij}+\frac12 g^{ab}\dot g_{ai}\dot g_{bj}\right].
    		%-p\frac{h'\eta'}{h\eta}g_{ij}-\frac{p}{2}\eta'\dot g_{ij}
    		%\Ric_{\alpha\alpha} &= \frac{(p-1)(1-\eta'^2)}{\eta^2}-\frac{\eta''}{\eta}- \frac{2h'\eta'}{h\eta}
    	\end{align}
    	Let $h(r)={D\arctan (\ln r)}$ for $D>0$ to be determined. %$h(r)=\frac1{\arctan (\ln r)}$ and $\eta(r)=\arctan (\ln r)$. 
    	We can easily verify the linear volume growth. Now let us verify the nonnegative Ricci curvature. By our definition of $f(r)$ we have $|f'(r)|\le \frac 1{2r\ln r}$ and 
    	\begin{align}\label{ieq:ricrr}
    		\Ric_{rr} &= -4 \frac{h''}{h} -\frac{1}{4}g^{ij}\dot g_{ja}g^{ab}\dot g_{bj}\notag\\%-p \frac{\eta''}{\eta}
    		&\ge 4\frac{(\ln r+1)^2}{r^2 (\ln ^2 r+1)^2\arctan(\ln r)}-\frac{1}{16 r^2\ln^2 r}-e^{-2r}-2e^{-r}\ge \frac 8\pi \frac{\ln^2 r}{r^2(\ln^2 r+1)^2}-\frac{1}{16r^2\ln^2 r}\notag\\
    		&\ge \frac{9}{4r^2\ln^2 r}\ge 0,\notag%+p\frac{(\ln r+1)^2}{r^2\arctan(\ln r)(\ln^2 r +1)^2}
    	\end{align}
       provided that $r\ge e$ and $ \frac 8\pi \frac{2\ln r+1}{r^2(\ln^2 r+1)^2}\ge e^{-2r}+2e^{-r}$. %and $p$ big enough. 
    
    Next note that 
    \begin{equation*}
    	\text{ $\frac{h'}{h}, \frac{h''}{h}\to 0$ as $r\to \infty$, and $h$ is bounded.}
    \end{equation*} 
    We have the following control 
    \begin{equation}\label{ieq:ricij}
    	\Ric_{ij}\ge 2g_{ij}- o\left(\frac{1}{r^2}\right)D^2g_{ij}-\frac{5h'h}{2}\dot g_{ij}+\frac{h^2}2 g^{ab}\dot g_{ai}\dot g_{bj}.
    \end{equation}
    Here the last two terms are zero when $i\neq j$ for $g_t$, and when $i=j$ it also decays faster than $\frac1 {r^2}$ by our choice of $f(r)$. In conclusion we have $\Ric_{ij}\ge g_{ij}$ for large $r$.
    %Next, note that 
    %\begin{equation*}
    	%\text{$h$, $\frac{h'}{h}, \frac{h''}{h}, \frac{\eta'}{\eta}$ are bounded, $\eta'$ is bounded when $r>1$, and $\frac{h'\eta'}{h\eta}\le 0$.}
    %\end{equation*} 
   %  If $g_{ij}\neq 0$ we can use $-p\frac{h'\eta'}{h\eta}g_{ij}$ to dominate $\Ric_{ij}$, so that $\Ric_{ij}\ge 1$ when $p$ is big enough in particular this is true for $\Ric_{ii}$. For the metric $g_t$ on $\C P^2\sharp\overline{\C P^2}$ we know $g_{ij}= 0$ for $i\neq j$, in this case $\Ric_{ij}=0$. In either case, we can conclude that $\Ric_{ij}\ge 0$. Similarly, since $\eta'< 1$ when $r>1$, it is easy to also ensure that $\Ric_{\alpha\alpha}\ge 1$. 
   Finally, we are left with the verification of $\Ric_{ir}\ge 0$. First we see from \eqref{eq:ricir} that
     \begin{equation}\label{ieq:ricir}
     	\Ric_{ir}\ge -\frac3{4r\ln r}-e^{-r}\ge -\frac3{2r\ln r}.
     \end{equation}
      Then we adapt the strategy of \cite{CN11}*{Lemma 2.1} to show the positivity of $\Ric_{ir}$. Fix a point and find an orthonormal basis at this point. Write every possible mixed direction as $\delta\hat r+\sqrt{1-\delta^2}\frac{\hat i}{h}$ for $\delta\in [0,1]$. It holds
      \[
      \Ric_{(\delta \hat r+\frac{\sqrt{1-\delta^2}\hat i}{h})(\delta \hat r+\frac{\sqrt{1-\delta^2}\hat i}{h})}\ge \frac{9\delta^2}{4r^2\ln^2 r} - \frac{3\delta\sqrt{1-\delta^2}}{h r\ln r}+\frac{1-\delta^2}{h^2}\ge \left(\frac{3\delta}{2r\ln r}-\frac{\sqrt{1-\delta^2}}{h}\right)^2\ge 0.
      \]
      
       Choose $r_k$ to be an increasing sequence so that $\sin (\frac12 \ln\ln r_k)=-1$ and choose $r_k'$ to be an increasing sequence so that $\sin (\frac12 \ln\ln r_k)=1$, and fix an arbitrary point $p\in \C P^2\sharp\overline{\C P^2}$. %\times \mb S^p. 
       It is immediate that the pGH limit along $(r_k,p)$ is $(\R\times \mb S^4,%\times \mb S^p, 
       \dist_{\mr{eu}}\otimes \dist_0$), %\otimes \frac\pi 2\dist_{g_{\mb S^p}})$, 
       and the pGH limit along $(r_k',p)$ is $(\R\times  \C P^2\sharp\overline{\C P^2}, %\times \mb S^p, 
     \dist_{\mr{eu}}\otimes \dist_{g_1})$.%$\otimes \frac\pi 2\dist_{g_{\mb S^p}})$. 
     
       \textbf{Step 2.} This step is to transit the Riemannian metric on $\C P^2\sharp\overline{\C P^2}$ from some $g_{f(r)}$ to $g_2$, which is known to be closable. Assume that the previous construction is done in $[\bar r,\infty)\times  \C P^2\sharp\overline{\C P^2}$ for some $\bar r>e$ large and satisfying $\sin (\frac12\ln\ln \bar r)= 1$, i.e., $\bar r$ will be chosen among $e^{e^{(4k+1)\pi}}\defeq r_{k}$, $k\in \N^+$. We extend our construction to $[\bar r_{k-1} ,\bar r_k]$ with the following properties.
       \begin{itemize}
     	\item $f$ can be smoothly extended on $[\bar r_{k-1},\bar r_k]$ so that $ f(\bar r_{k-1})=2$, $ f'(\bar r_{k-1})=0$ and $f(r)\in (0,2]$ for $r\in [\bar r_{k-1},\bar r_k]$. In particular $\Ric_{ir}=0$ at $\bar r_{k-1}$ and $\dot g_{ij}$ has no contribution to the second fundamental form at $\bar r_{k-1}$.      	
     	\item $\Ric_{rr}$ and $\Ric_{ij}$ remains positive for any $r\in [\bar r_{k-1},\bar r_k]$. 
     \end{itemize}
     %Let $\eps(\bar r_k)\defeq e^{-\bar r_k}$, then $f(\bar r_k)=2-\eps(\bar r_k)$. 
     To achieve the two items above, let $\psi(r)$ be a smooth function on $\R$ such that when $r\le 0$, $\psi(r)=1$, when $r\ge1$, $\psi(r)=0$ and $\psi(r)\in[0,1]$ on $\R$ and its derivative of all orders vanish at $0,1$. Set 
     $$
     e(r)=1-e^{-r}+\psi\left(\frac{r-r_{k-1}}{r_k-r_{k-1}}\right)e^{-r}.
     $$ 
     On $[\bar r_{k-1}, \bar r_k]$, we define $f(r)= e(r)\sin (\ln\ln r)$. Then it is readily checked that $f$ is smooth at $r_k$, $f(\bar r_{k-1})=2$, and $f'(\bar r_{k-1})=0$.  Now we check the Ricci tensors. First by \eqref{eq:ricrr} we have
     %We will fix $\bar r-\tilde r$ in this step. We construct $f$ in $[\tilde r,\bar r]$. Let $\rho>0$ be small, define $k(r)$ on $[\tilde r+\rho,\bar r-\rho]$ so that $k(\tilde r+\rho)=2$, $k(\bar r-\rho)=f(\bar r)$  be a cut off function whose derivative of all order matches $f$ at $\bar r$ for example by a convolution with smooth function that decays arbitrarily slow. For $\tilde r$ large enough so that our construction above works, we have that
     \[
     \Ric_{rr}\ge \frac8{\pi\bar r^2\ln ^2 \bar r}-\frac{1}{4}|f'|^2\ge \frac9{4\bar r^2\ln ^2 \bar r}.
     \]
      This is still true provided that $\bar r=\bar r_k$ is large enough since our modification only contributes some terms with exponential decay. For $\Ric_{ij}$ the same reasoning works, we just need to choose $\bar r$ large so that $\Ric_{ij}\ge g_{ij}$. It again remains to see $\Ric_{ir}$. Recall \eqref{eq:ricir}, we can again use the same trick as the one right below \eqref{ieq:ricir}. 
      
      We now compute the second fundamental form $\mr{II}$ at $\bar r_{k-1}$. Since $f'=0$, we have that 
     \begin{align}
        \mr{II}_{ij}&= -\frac{h'}{h}\partial_r g_{ij}= -\frac{\partial r}{\tilde r(1+\ln ^2 \tilde r)\arctan(\ln \tilde r)}g_{ij},\notag
    	%\mr{II}_{\alpha\alpha}&= -\frac{\eta'}{\eta}= -\frac{\partial_ r}{\tilde r(1+\ln ^2 \tilde r)\arctan(\ln \tilde r)}.
     \end{align}
     \textbf{Step 3.} For the last step, we would like to glue the manifold with boundary we constructed to the so-called $\mc{C}_2$ constructed in \cite{CN11}*{Example II}. This would give us a manifold without boundary. %$\mc{C}_2\times \mb S^p$ with metric $g_{\mc{C}_2}+l^2(s)g_{\mb S^p}$, where $\mc{C}_2$ is constructed in \cite{CN11}*{Example II}, in particular it is a cone with radial direction $\partial_s$ so it makes to consider $l(s)$, and
   %\begin{equation*}
   	%l(s)=\begin{cases}
   	%	1, &s\in (1,\tilde r-1]; \\
   	%	1-e^{-[s-(\tilde r-\frac12)]^{-1}}, & r\in [\tilde r-\frac12,\tilde r] .
   	%\end{cases} 
   %\end{equation*} 
   % Here $l(s)$ is monotone decreasing $|l'(s)|\le 4e^{-2}\le 1$ and concave, the contribution of $l(s)$ to Ricci tensors is positive. 
     For the gluing, note that it is shown that the second fundamental form of $\mc{C}_2$ is strictly positive, say $|\mr{II}^{\mc{C}_2}|> \lambda>0$. % and the second fundamental form of directions on the sphere is positive and it is $|\frac{l'(\tilde r)}{l(\tilde r)}|$ . 
     By choosing $\bar r_k$ big enough, we can have $|\mr{II}_{ij}|<\lambda$. 
     
     So far we have already chosen $\bar r$ by combining all the requirements in the last two steps. %less than $\langle\mr{II}^{\mc{C}_2\times_l \mb S^p},\partial_s\rangle$. 
     It remains to determine the constant $D$ in $h(r)$.  We want $\{\bar r_{k-1}\}\times \C P^2\sharp\overline{\C P^2}$ to be isometric to $\partial \mc{C}_2$. This can be achieved by choosing some $D\in (0,1)$ since ${\arctan(\ln r)}\to \frac\pi 2$ and $\mc{C}_2$ is fixed and $\partial \mc{C}_2$ has small diameter. This means the previous bound for $\Ric_{ij}$ still holds so the bounds for $\Ric_{rr}$ and $\Ric_{ir}$ also hold.  %$\tilde \eta(r)=E \arctan(\ln r)$ for some $E<1$, this is possible when $\arctan(\ln \tilde r)> 1> 1-e^{-2}$. 
     In conclusion after scaling the nonnegativity of the Ricci tensors of $[\bar r,\infty)\times \C P^2\sharp\overline{\C P^2}$ %\times \mb S^p$ 
     equipped with $\di r^2+ h^2(r)g_{f(r)}$ is preserved. %+\tilde \eta^2(r)g_{\mb S^p}$. 
     Now by Perelman \cite{PerelmanLargeBetti}, we can glue $[\bar r,\infty)\times \C P^2\sharp\overline{\C P^2}$  and $\mc{C}_2$ along the boundary, so we are done with our construction.
    \end{proof}

    We proceed to the next example. As suggested by Daniele Semola, we can also construct a manifold with nonisometric limit spaces at infinity in dimension $3$, reducing the dimension requirement in Sormani's example \cite{SormaniMiniVol}*{Example 27}, which is $4$.
    \begin{theorem}\label{thm:3dnoniso}
	  For a given volume $V>0$, there exists an open $3$-manifold with nonnegative Ricci curvature and linear volume growth such that the cross sections of limit spaces at infinity can be $\mb S^2$ with infinitely many different metrics of the form $d r^2+\sin^2(ar)g_{\mb S^1}$ for $a\in (0,1)$ and $r\in [0,\pi]$. Moreover with any of these metrics, the $\haus^2$ volume of $\mb S^2$ is $V$.	
	\end{theorem}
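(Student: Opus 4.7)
The strategy mirrors the proof of Theorem \ref{thm:nonhomeo}, with dimension $5$ replaced by dimension $3$ and ``nonhomeomorphic'' weakened to ``nonisometric with the same topology and volume''. The role played there by the Cheeger--Naber family of metrics on $\C P^2\sharp\oli{\C P^2}$ is now played by a one-parameter family $\{g_a\}_{a\in[a_-,a_+]}$ of (possibly spindle-type) constant-curvature metrics on $\mb S^2$ of the warped form $dr^2+\phi_a^2(r)\,g_{\mb S^1}$ with $\phi_a$ proportional to $\sin(ar)$, where $[a_-,a_+]\subset(0,1)$. The family should be arranged so that (i) each $g_a$ has the same $\haus^2$-volume $V$ (achieved by an $a$-dependent rescaling of the $\mb S^1$-fiber); (ii) the Gauss, hence Ricci, curvature of $g_a$ is uniformly bounded below by $a_-^2>0$; and (iii) $|\partial_a g_a|,|\partial_a^2 g_a|,|\nabla\partial_a g_a|$ are uniformly bounded in $a$. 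The $g_a$ are pairwise nonisometric as $a$ varies, so any two sequences accumulating at distinct $a$-values produce distinct limit cross-sections, all of volume $V$.

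Next, on $[\bar r,\infty)\times\mb S^2$ I would put the warped product metric
\[
g = dr^2 + h^2(r)\, g_{f(r)},
\]
where $h(r)=D\arctan(\ln r)$ for a constant $D>0$ fixed at the gluing step, and $f:[\bar r,\infty)\to[a_-,a_+]$ is a slowly oscillating smooth function patterned on the choice $(1-e^{-r})\sin(\tfrac12\ln\ln r)+1$ of Theorem \ref{thm:nonhomeo}, so that $f$ visits every $a\in[a_-,a_+]$ infinitely often while $|f'|,|f''|\lesssim 1/(r\ln r)$. Boundedness of $h$ yields linear volume growth immediately, and the oscillation of $f$ guarantees that for each target $a\in[a_-,a_+]$ some sequence $r_k\to\infty$ satisfies $f(r_k)\to a$, along which the pGH limit of $(M,g,\gamma(r_k))$ splits as $(\R\times(\mb S^2,(D\pi/2)^2 g_a),\mathrm{eu}\otimes\mathrm{dist})$. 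Choosing $D=2/\pi$ makes the limit cross-section volume exactly $V$.

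Nonnegativity of $\mr{Ric}_g$ is the three-dimensional counterpart of the computation after \eqref{eq:ricrr}--\eqref{eq:ricir}. The dominant contribution to $\mr{Ric}_{rr}$ is $-2h''/h\gtrsim 1/(r^2\ln^2 r)$, which beats the $O(|f'|^2)=O(1/(r\ln r)^2)$ error coming from $\dot g_{f(r)}$ as in \eqref{ieq:ricrr}; the fiber part $\mr{Ric}_{ij}$ is bounded below by $\mr{Ric}^{g_{f(r)}}\ge a_-^2 g_{f(r)}$ minus terms of order $(h'/h)\dot g=o(1/r)$, hence is positive for large $r$ as in \eqref{ieq:ricij}; and the mixed part $\mr{Ric}_{ir}$ obeys a bound of the form $\gtrsim -1/(r\ln r)$ as in \eqref{ieq:ricir}, after which the completing-the-square argument displayed immediately below that estimate shows every mixed unit tangent direction has nonnegative Ricci.

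To produce a complete three-manifold without boundary, one caps off $\{\bar r\}\times\mb S^2$ by gluing in a smooth mean-convex three-dimensional piece with nonnegative Ricci, as in Step 3 of Theorem \ref{thm:nonhomeo}: choosing $\bar r$ at an extremum of $f$ where $f'(\bar r)=0$ forces $\mr{II}$ to be diagonal and inward-positive, and tuning $D$ matches the scale to a Perelman-type cap \cite{PerelmanLargeBetti}. The hard part will be producing the family $\{g_a\}$ satisfying (i)--(iii) \emph{simultaneously}: the fixed-volume constraint precludes using smooth round spheres only, since they form a one-parameter family parametrized by volume. Two natural workarounds are to accept conic singularities along the polar circle of each fiber---so the warped product is a smooth Riemannian metric off a curve and the construction delivers an $\ncRCD(0,3)$ rather than a smooth Riemannian structure at infinity---or to resolve the conic locus by a short smoothing of $\phi_a$ near the poles, absorbing an $o(1)$ error into the Ricci estimates above. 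Either resolution leaves the limit cross-sections at infinity equal to $(\mb S^2,g_a)$ for all $a\in[a_-,a_+]$, proving the theorem.
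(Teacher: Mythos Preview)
Your overall architecture is right and matches the paper's: a doubly-warped metric $dr^2+h^2(r)g_{f(r)}$ on $[\bar r,\infty)\times\mb S^2$ with $h=D\arctan(\ln r)$ bounded and $f$ oscillating like $\sin(\tfrac12\ln\ln r)$, followed by a Perelman cap. The Ricci estimates you sketch also carry over.

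The gap is in your handling of the cone points. Your option (a) produces an $\ncRCD(0,3)$ space, not a smooth manifold, so it does not prove the theorem as stated. Your option (b) is more subtle but, as written, also fails: if you smooth each $\phi_a$ once and for all to get $\tilde g_a$, then the cross-section at level $r$ is $h^2(r)\tilde g_{f(r)}$, and the pGH limit along $f(r_k)\to a$ is $\R\times(\mb S^2,(D\pi/2)^2\tilde g_a)$, \emph{not} $\R\times(\mb S^2,(D\pi/2)^2 g_a)$. So the limit cross-sections are smooth spheres, not the singular spindles $dr^2+\sin^2(ar)g_{\mb S^1}$ demanded by the statement. Your claim that ``either resolution leaves the limit cross-sections equal to $(\mb S^2,g_a)$'' is incorrect for (b).

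What is missing is an \emph{$r$-dependent} smoothing whose scale tends to zero as $r\to\infty$. The paper implements this by enlarging the family to two parameters $g_{t,s}$, where $s$ is a Ricci flow time starting from the singular suspension $g_{t,0}$, and then taking the cross-section metric to be $g_{f(r),l(r)}$ with $l(r)=1-e^{-1/\ln\ln\ln r}\to 0$. This buys two things simultaneously: at every finite $r$ the fiber metric is genuinely smooth (so $M$ is a smooth $3$-manifold), while as $r\to\infty$ the smoothing disappears and the limit fibers are the singular $g_{f(r_k),0}$. The use of Ricci flow is not incidental: Richard's estimates for Ricci flow out of a $2$-dimensional Alexandrov initial datum supply uniform curvature lower bounds and the derivative bounds $|\partial_s g|,|\nabla\partial_s g|,|\partial_s^2 g|\le C/s^{k}$ needed to feed into the Ricci computation, and the extremely slow decay of $l(r)$ is chosen precisely so that these blow-ups are beaten by the decay of $l'(r)$. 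An ad hoc pole-smoothing could in principle be made $r$-dependent too, but you would then have to manufacture all of these estimates by hand.
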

	
	\begin{proof}
		It follows from \cite{CN11}*{Example I} and our previous construction. For a parameter $t>0$, we denote the spherical suspension over a manifold $(M,g)$ by $S_t(M)$ with Riemannian metric $\di r^2+ \sin^2 (\frac rt)g$ for $r\in (0,t\pi)$. Consider $S_{t_1}(\mb S^1_{t_0})$ with parameters $t_0,t_1\in (0,1)$, where $t_0$ is the radius of $\mb S^1_{t_0}$. We normalize the given $V$ to be $\haus^2(S_{1/2}(\mb S^1_{1/2}))$, as the construction is the same for all other volumes. The set 
		\[
		\Omega\defeq\{(t_0,t_1)\in (0,1)^2: 0< t_0\le t_1<1, \haus^2 (S_{t_1}(\mb S^1_{t_0}))=\haus^2(S_{1/2}(\mb S^1_{1/2}))\}
		\]
		is a smooth curve in $(0,1)^2$ that contains $(1/2,1/2)$. For the construction we can fix a parametrization so that $\Omega=(0,1)$ and $(1/2,1/2)$ corresponds to $\frac12$. There are several ways to smooth $S_{t_1}(\mb S^1_{t_0})$ while keeping positive Ricci curvature, for example removing a neighborhood of each singularity and gluing in a spherical cap, which is possible by Perelman \cite{PerelmanLargeBetti}. For convenience we use Ricci flow starting from a $2$-dimensional Alexandrov space \cite{RichardSmoothing} instead, since it naturally gives rise to a one-parameter family of metrics with the same lower curvature bound and a uniform control of distances hence volumes \cite{RichardSmoothing}*{Theorem 1.1}. Let $\dist_t$ be the possibly singular metric corresponding to $t\in(0,1)=\Omega$, and $(g_{t,s})_{s\in(0,1]}$ be the Ricci flow starting from $\dist_t$. Along with our choice of parametrization of $\Omega$, we can also assume the $t$ derivatives of $g_{t,s}$ satisfies \eqref{item:derivative} of Theorem \ref{thm:nonhomeo}.  %By the uniform distance estimates in \cite{RichardSmoothing}*{Theorem 1.1}, for $s>0$ small we may slightly rescale $g_{t,s}$ so that volume and curvature lower bound is not changed, say it is possible for $s\in (0,s_0)$.
		 We consider the metric on $[\bar r,\infty)\times \mb S^2$ of the form
		\begin{equation*}
		    \di r^2+ h^2(r)g_{f(r),l(r)}
		\end{equation*}
		Again take $h(r)=\arctan (\ln r)$, $f(r)=\frac{1-e^{-10 r}}2\sin (\frac12\ln \ln r)+\frac12\in (0,1)$ and $l(r)=1- e^{-\frac1{\ln \ln \ln r}}$. It is easy to see that as $r\to \infty$ the possible limits of $f(r)$ are exactly points in $[0,1]$. The construction is now the same as the previous one. To see the bounds of $s$ derivatives as in \eqref{item:derivative} of Theorem \ref{thm:nonhomeo}, we recall classical curvature estimates along Ricci flow which in turn implies that
		\begin{align*}
			 |\partial_s g_{t,l(s)}|&=2\left|Ric^{g_{t,l(s)}}l'(s)\right|\le\frac{C_1e^{-\frac1{\ln \ln \ln s}}}{s\ln s \ln \ln s(\ln\ln\ln s)^2(1-e^{-\frac1{\ln \ln \ln s}})}=o\left(\frac1{s\ln s}\right),\\ 
			|\partial_s \nabla g_{t,l(s)}|&=2\left|\nabla Ric^{g_{t,l(s)}}l'(s)\right|\le\frac{C_2e^{-\frac1{\ln \ln \ln s}}}{s \ln s\ln \ln s (\ln\ln\ln s)^2(1-e^{-\frac1{\ln \ln \ln s}})^2}=o\left(\frac1{s\ln s}\right),\\
			|\partial_s\partial_s g_{t,l(s)}|&=2|\partial_s \Ric^{g_{t,l(s)}}l'(s)+\Ric^{g_{t,l(s)}}l''(s)|\\ \ &\le \frac{C_3e^{-\frac2{\ln \ln \ln s}}}{(s \ln s\ln \ln s)^2 (\ln\ln\ln s)^4(1-e^{-\frac1{\ln \ln \ln s}})^3}+O\left(\frac1{s^2\ln s}\right)=o\left(\frac1{s\ln s}\right).
		\end{align*}
		Here we have used the evolution equation of Ricci curvature under Ricci flow and $\partial_s$ denotes the derivative w.r.t. the second subscript of $g_{t,s}$. 
		
		 As in \textbf{Step 2} of the previous example, after completing the contruction for large $r$, we let the metric to transit from $g_{f(\bar r), e^{-\bar r}}$ to $g_{\frac12,0}=g_{\frac12}$ and then we can glue in a disk to close it.  
		
	\end{proof}
    \bibliographystyle{amsalpha}
\bibliography{lineandend}
\end{document}